\documentclass{mathscan}

\RequirePackage[utf8]{inputenc}
\RequirePackage[T1]{fontenc}

\RequirePackage{lmodern}           
\RequirePackage[final]{microtype}  

\RequirePackage{amssymb}    
\RequirePackage{amsthm}     
\RequirePackage{thmtools}   
\RequirePackage{mathtools}  
\RequirePackage{mathrsfs}   

\RequirePackage[dvipsnames,svgnames,cmyk]{xcolor}  
\RequirePackage{graphicx}         
\graphicspath{{figures/}}
\RequirePackage{tikz}             
\RequirePackage{tikz-cd}          
\usepackage{tikz-3dplot}

\usepackage{tkz-euclide}
\usetikzlibrary{calc,through}
\usetikzlibrary{intersections}

\usepackage[font=small]{subcaption} 
\usepackage{times,mathptmx}

\usepackage{pstricks-add}
\usepackage{pst-func}

\usepackage{enumitem}

\RequirePackage{xspace}         
\RequirePackage{textcomp}       
\RequirePackage{float} 

\RequirePackage{varioref}
\RequirePackage{hyperref}
\RequirePackage[nameinlink, capitalize, noabbrev]{cleveref}


\declaretheorem[numberwithin = section]{theorem}
\declaretheorem[sibling = theorem]{corollary}
\declaretheorem[sibling = theorem]{lemma}
\declaretheorem[sibling = theorem]{proposition}

\declaretheorem[sibling = theorem]{definition}
\declaretheorem[sibling = theorem]{example}

\declaretheorem[sibling = theorem]{remark}


\newcounter{introthmnum}
\newenvironment{introthm}[1]{\stepcounter{introthmnum}
    \newtheorem*{introthmTemp:\arabic{introthmnum}}{#1}\begin{introthmTemp:\arabic{introthmnum}}}{\end{introthmTemp:\arabic{introthmnum}}}

\DeclarePairedDelimiter{\set}{\lbrace}{\rbrace}        
\newcommand{\N}{\mathbb{N}}    
\newcommand{\Z}{\mathbb{Z}}    
\newcommand{\Q}{\mathbb{Q}}    
\newcommand{\R}{\mathbb{R}}    
\newcommand{\C}{\mathbb{C}}    
\renewcommand{\P}{\mathbb{P}}  
\newcommand{\F}{\mathcal{F}}

\DeclareMathOperator{\Hom}{Hom}
\DeclareMathOperator{\im}{im}
\DeclareMathOperator{\val}{val}
\DeclareMathOperator{\Trop}{Trop}
\DeclareMathOperator{\trop}{trop}

\DeclareMathOperator{\tm}{TM}
\DeclareMathOperator{\ctm}{CTM}

\DeclareMathOperator{\tropdiv}{tropdiv}
\DeclareMathOperator{\Pic}{Pic}

\newcommand{\Xbar}{{\overline{X}}}

\newcommand{\plane}[1]{\P_{#1}^2}
\newcommand{\curve}{C}
\newcommand{\varp}{\mathbf{p}}
\newcommand{\varx}{\mathbf{x}}

\newcommand{\torus}{\mathbf{T}}
\newcommand{\intTor}[1]{\torus_{#1}}
\newcommand{\torusCoords}[1]{(K^*)^{#1}}

\newcommand{\Osheaf}{\mathcal{O}}

\newcommand{\arrCur}{\mathcal{B}}
\newcommand{\arrCompCur}{X_{\arrCur}}
\newcommand{\arrCompCurBar}{\overline{X}_{\arrCur}}
\newcommand{\arrCurLines}{\mathcal{B}_L}
\newcommand{\arrCompCurLines}{\mathcal{B}_L}
\newcommand{\arrCompCurDeletion}[1]{X_{\arrCur \setminus \set{#1}}}

\newcommand{\cluster}{\mathcal{C}}

\newcommand{\variety}{X}
\newcommand{\varietyReal}{\variety(\R)}
\newcommand{\varietyComplex}{\variety(\C)}

\newcommand{\wtil}[1]{\widetilde{#1}}
\renewcommand{\div}{\mathrm{div}}

\newcommand{\pbf}{\mathbf{p}}
\newcommand{\groundset}{\mathcal{A}}
\newcommand{\points}{\mathcal{P}}

\newcommand{\arroid}{\mathbf{A}}
\newcommand{\arroidDeletion}[1]{\arroid\setminus #1}

\newcommand{\divisor}{\mathbf{D}}

\DeclareMathOperator{\cone}{cone}
\newcommand{\supface}{\succ}

\newcommand{\subface}{\prec}

\newcommand{\mulSet}{m}
\newcommand{\mul}[1]{m_{#1}}
\newcommand{\Star}[2]{{{#1}^{#2}}}
\newcommand{\lattice}[1]{L_\Z (#1)}
\newcommand{\sign}[2]{\mathrm{sign}(#1,#2)}
\newcommand{\mincone}{\mathbf{0}}


\begin{document}

\title{Tropicalization of curve arrangement complements and arroids}

\author{Edvard Aksnes}
\thanks{This is the accepted version of the article published in \emph{Mathematica Scandinavica} \textbf{131} (2025), no.~2. DOI: \href{https://doi.org/10.7146/math.scand.a-156674}{10.7146/math.scand.a-156674}}
\address{Department of Mathematics, University of Oslo, Oslo, Norway.}
\email{\href{edvardak@math.uio.no}{edvardak@math.uio.no}}

\begin{abstract}
    We define \emph{arroids} as an abstract axiom set encoding the intersection properties of arrangements of curves. The tropicalization of the complement of an arrangement of curves meeting pairwise transversely is shown to be determined by the associated arroid. We give conditions for when the cohomology of the complement of an arrangement is computable using tropical cohomology, and we give criteria for when the complement is a maximal variety in terms of tropical geometry.
\end{abstract}
\maketitle

\setcounter{tocdepth}{1}
\tableofcontents

\section{Introduction}
Drawing inspiration from matroids, which abstractly axiomatize arrangements of hyperplanes, we define \emph{arroids}, which provide a possible abstract axiom set for the incidence geometry of arrangements of curves in the plane. To any arrangement of curves, one may associate an arroid. An arroid $\arroid$ consists of an underlying set $\groundset$ where each element $i$ is equipped with a degree $d_i$, along with a multiset $\points$ of subsets of $\groundset$. Each set $\pbf \in \points$ is equipped with a multiplicity function $\mul{\pbf}\colon \pbf^2 \to \Z$, and the multiset $\points$ must satisfy a Bézout condition in terms of the multiplicity functions. 

When all the multiplicity functions of an arroid are constant taking value one, the arroid is said to be \emph{transversal}. We construct a fan associated to each transversal arroid, and the following theorem shows that such a fan is a tropical variety, i.e. satisfies the balancing condition of tropical geometry, see e.g. \cite{BIMS14,MaclaganSturmfels} for definitions. 
\begin{introthm}{\cref{theorem:arroid_fan_tropical_variety}}
    For each transversal arroid $\arroid$, there is a fan $\Sigma_\arroid$, called the \emph{fan of $\arroid$}, which is a balanced tropical variety.
\end{introthm}
Using transversal arroids, we proceed to study the tropicalization of the complements of certain types of arrangements of curves. An arrangement of curves is \emph{very affine} if it contains at least three lines intersecting generically, and \emph{transverse} if all curves of the arrangement intersect pairwise transversely. In \cref{section:arroid_tropicalization}, we show that for a transverse very affine arrangement of curves, the tropicalization of the complement is computed by the arroid fan.
\begin{introthm}{\cref{theorem:transverse_very_affine_curve_fan}}
    Let $\arrCur$ be a transverse very affine arrangement of curves in the plane $\plane{K}$. Then the tropicalization $\trop(\arrCompCur)$ of the complement is supported on the fan of the associated transversal arroid $\arroid_\arrCur$.
\end{introthm}
For arrangements of lines, \cref{theorem:transverse_very_affine_curve_fan} recovers that the tropicalization of the complement is computed using the rank three matroid of the arrangement (see e.g. \cite[Theorem 4.1.11]{MaclaganSturmfels}), using the Ardila--Klivans fan structure \cite{ArdilaKlivans}.
The difficulty in generalizing beyond the transverse case lies primarily in understanding the resolution of singularities that arise when higher order intersections are allowed in the arrangement, reflecting an earlier observation found in \cite[p.20]{CuetoGeomTrop}.

Next, we turn to relating the cohomology of the complement of a very affine transverse arrangement of curves to the tropical cohomology of the fan of its associated transversal arroid. For a reminder on tropical cohomology, see \cref{section:tropical_co_homology}. 
For line arrangements and their corresponding matroids, an isomorphism between the matroid Orlik--Solomon algebra \cite{OSalg}, computing cohomology of the complement using only the intersection properties recorded by the matroid, and tropical cohomology of the matroid fan, was described by Zharkov \cite{Zharkov}. 
We consider transverse very affine arrangements $\arrCur$ of non-singular rational curves in $\plane{\C}$, i.e. of lines and conics, such that that no intersection point of the arrangement contains exactly the same curves. Such an arrangement will be called \emph{simple}. 

In \cref{prop:wunderschon_arrangement}, we show that the complement of a simple arrangement is \emph{wunderschön} in the sense of \cite[Definition 1.2]{AAPS23}, which is in this context primarily a restriction on its mixed Hodge structure. 
In light of \cite[Theorem 6.1]{AAPS23}, this implies that the complement of simple arrangements are \emph{cohomologically tropical} i.e. its rational cohomology can be computed using the $\Q$-coefficient tropical cohomology of its tropicalization, if and only if the corresponding arroid fan is a tropical homology manifold.
\begin{introthm}{\cref{theorem:cohom_trop}}
    Let $\arrCompCur$ be the complement of a simple arrangement $\arrCur$. Then $\arrCompCur$ is cohomologically tropical if and only if the corresponding arroid fan $\Sigma_{\arroid_\arrCur}$ is uniquely balanced along each of its rays.
\end{introthm}
This result follows from equivalent conditions for an arroid fan to be a tropical homology manifold given in \cref{theorem:arroid_THM_is_local}, and we study which conditions this imposes on curve arrangements in \cref{section:unique_balancing_rays_arrangement}.

Using \cref{theorem:cohom_trop}, we study the question of \emph{maximality} for a real arrangement and its complexification.
Let $\variety$ be a complex variety defined over $\R$, with $\varietyReal$ its set of real points and $\varietyComplex$ its set of complex points. The \emph{Smith-Thom inequality} gives bounds for the sum of the $\Z/2\Z$-Betti numbers as follows,
\begin{equation}\label{eq:smith-thom}
b_\bullet(\varietyReal) \coloneqq \sum_{i\geq 0} b_i(\varietyReal) \leq \sum_{i\geq 0} b_i(\varietyComplex) \eqqcolon b_\bullet(\varietyComplex),
\end{equation}
and the variety is \emph{maximal} if equality is achieved. In \cite{BrugalleSchaffhauser}, varieties with torsion-free cohomology satisfying the stronger inequalities 
$$b_i(\varietyReal) \leq \sum_j h^{i,j}(\varietyComplex)$$ 
in terms of the Hodge numbers of their complex parts are called \emph{sub-Hodge expressive}. In \cite{RenaudineauShaw}, Renaudineau and Shaw studied real algebraic hypersurfaces near the tropical limit, and gave bounds for Betti numbers in terms of tropical homology. Recently, Ambrosi and Manzaroli \cite{AmbrosiManzaroli} study the central fiber of a totally real semistable degeneration over a curve. They give three conditions on the components of the central fiber for each of the nearby fibers to be sub-Hodge expressive. For each open component $\variety$ of the central fiber, the conditions are the following:
\begin{enumerate}[label=(\alph*)]
    \item $H^i(\varietyReal;\Z/2\Z)=0$ for all $i\geq 1$,
    \item $\variety$ is a maximal variety, and
    \item the mixed Hodge structure on $H^i (\varietyComplex; \Q)$ is pure of type $(i,i)$ and integer cohomology is torsion free, for all degrees $i$.
\end{enumerate}
In light of these conditions, in \cref{theorem:THM_maximal} we give conditions for the complement of a simple arrangement of real curves in the plane to be maximal using its tropicalization. Moreover, using the wunderschön and cohomologically tropical properties, we give the following concrete construction of varieties satisfying the above conditions.
\begin{introthm}{\cref{theorem:amb_manz_examples}}
    Let $\arrCur$ be a simple arrangement of real curves in $\plane{\C}$, with all intersection points being real, and such that the tropicalization $\Trop(\arrCompCur)$, which is supported on the arroid fan $\Sigma_{\arroid_\arrCur}$, is a tropical homology manifold. Then the following four properties are satisfied:
    \begin{enumerate}[label=(\alph*)]
        \item $H^i (\arrCompCur(\R);\Z/2\Z)=0$ for $i\geq 1$,
        \item $\arrCompCur$ is a maximal variety, 
        \item the mixed Hodge structure on $H^i(\arrCompCur(\C);\Q)$ is pure of type $(i,i)$ with cohomology groups $H^i(\arrCompCur(\C);\Z)$ torsion-free for $i\geq 1$, and
        \item $\dim_\Q H^i(\arrCompCur(\C);\Q) = \sum_j \dim_\Q H^{i,j}(\Sigma_{\arroid_\arrCur})$ for each $i\geq 0$.
    \end{enumerate}
\end{introthm}
This theorem is illustrated by providing an infinite family of maximal surfaces in \cref{ex:inf_family}, which give examples of the types of variety required in \cite{AmbrosiManzaroli}, using conditions (a), (b) and (c). Condition (d) can be compared to the bounds for Betti numbers given in \cite{RenaudineauShaw}.

The paper is structured as follows. In \cref{section:preliminaries}, we recall the notions of geometric tropicalization, tropical modifications, and tropical cohomology. In \cref{section:curve_arr}, we illustrate how geometric tropicalization can be used to compute the tropicalization of the complement of an arrangement of curves in the plane. In \cref{section:arroids}, we introduce arroids as an abstract generalization of arrangements of curves, define fans associated to arroids which are \emph{transversal} (see \cref{def:transversal_arroid}), and relate the tropicalization of the complement of an arrangement to the fan of the associated arroid in the transversal case. Next, in \cref{section:THM_arroid}, we study which arroid fans are tropical homology manifolds, and give some conditions for the arroid fan of an arrangement of lines and conics to be a tropical homology manifold. Finally, in \cref{section:maximal}, use the concepts developed in the rest of the paper to study the maximality of arrangements of lines and conics.

\subsection*{Acknowledgments}
I am very grateful to Kris Shaw for many discussions and suggestions. I also thank the anonymous referee for insightful comments and suggestions. This research was supported by the Trond Mohn Foundation project ``Algebraic and Topological Cycles in Complex and Tropical Geometries''.

\section{Preliminaries}\label{section:preliminaries}
In this section, we recall certain notions in toric and tropical geometry. For the remainder of this paper, let $K$ be a trivially valued algebraically closed field.

\subsection{The intrinsic torus} We recall the definition and properties of the intrinsic torus, following \cite{HackingKeelTevelev,SturmfelsTevelev,Tevelev}.
For $X$ a variety, let $\Osheaf(X)$ be its coordinate ring, with group of units $\Osheaf(X)^*$. The group $\Osheaf(X)^* /K^*$ is free abelian of finite rank, and the torus $\intTor{X} \coloneqq \Hom(\Osheaf(X)^* /K^*, K^*)$ is called the \emph{intrinsic torus} of $X$, with lattice of characters $M = \Osheaf(X)^* /K^*$. Choosing a splitting of the quotient sequence for $M$, or equivalently a set of generators, defines a map $X \to \intTor{X}$, and we say that $X$ is \emph{very affine} is this gives a closed embedding. Furthermore, any closed embedding of a variety into an algebraic torus factors through an embedding to the intrinsic torus, composed with a monomial map. See \cite[Section 6.4]{MaclaganSturmfels} for details.

\subsection{Geometric tropicalization}\label{sec:geom_trop} We briefly recall the \emph{geometric tropicalization} approach to tropicalization. Let $\torus$ be an algebraic torus over the field $K$, with $M$ its lattice of characters and $N \coloneqq \Hom_\Z(M,\Z)$ its dual lattice. Let $X\subset \torus$ be a subvariety and $X'$ a normal, $\Q$-factorial variety birational to $X$. Any divisor $D$ on $X'$ induces a valuation $\val_D \colon K(X) \to \Z$ on the field of fractions $K(X)$ of $X$. Such a valuation is called a \emph{divisorial valuation} on $K(X)$, and it defines a vector $[\val_D]\in N_\Q$ by restricting a character $m\in M$ to a rational function $m|_X \in K(X)$ and evaluating it using $\val_D$.

The following proposition, originally shown in \cite[p. 176]{HackingKeelTevelev}, with an alternate proof given in \cite[Theorem 2.4]{SturmfelsTevelev}, characterizes the tropicalization of $X$ using divisorial valuations. 

\begin{proposition}\label{all_val}
    The tropicalization $\Trop(X)$ is equal to the closure of the subset 
    $$\set{c [\val_D] \mid c\in \R_{\geq 0},\; \val_D \text{ a divisorial valuation on } K(X)} \subseteq N_\R.$$ 
\end{proposition}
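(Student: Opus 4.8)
The plan is to prove both inclusions between $\Trop(X)$ and the closure of the set of divisorial rays. The statement is essentially a reformulation of the Hacking--Keel--Tevelev and Sturmfels--Tevelev results, so the proof should proceed by reducing to the standard description of tropicalization via valuations on $K(X)$.

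\textbf{Setup and the easy inclusion.} First I would recall the fundamental theorem of tropical geometry, which identifies $\Trop(X)$ with the set of vectors $w\in N_\R$ arising from $w = (\val(x_1),\dots,\val(x_n))$ for valuations $\val$ on $K(X)$ extending the (trivial) valuation on $K$, where $x_1,\dots,x_n$ are the chosen coordinates realizing the embedding $X\subset\torus$. Since $K$ is trivially valued, such valuations are $\R_{\geq 0}$-valued and the tropicalization is a cone. The containment $\overline{\set{c[\val_D]}} \subseteq \Trop(X)$ is the straightforward direction: any divisorial valuation $\val_D$ associated to a divisor $D$ on a normal model $X'$ is in particular a valuation on $K(X)$ trivial on $K^*$, so the vector $[\val_D]$ it defines lies in $\Trop(X)$ by the valuative characterization; scaling by $c\in\R_{\geq 0}$ stays in the cone $\Trop(X)$, and since $\Trop(X)$ is closed, the closure of the set of divisorial rays is contained in it.

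\textbf{The hard inclusion.} The reverse containment $\Trop(X)\subseteq \overline{\set{c[\val_D]}}$ is the substantive part. The idea is that the rays of $\Trop(X)$ corresponding to divisorial valuations are dense enough to recover all of $\Trop(X)$ under taking closure. I would argue as follows. A point $w$ in the relative interior of a maximal cone $\sigma$ of $\Trop(X)$ with rational slope corresponds, via the fundamental theorem, to a valuation $\val_w$ on $K(X)$; after clearing denominators one may assume $w$ is a primitive integer vector, and such a lattice point gives a monomial valuation that can be realized as a divisorial valuation on a suitable toric model — concretely, one takes a toric variety whose fan refines $\Trop(X)$ so that $w$ spans a ray, and the corresponding torus-invariant divisor meets the closure of $X$ in a divisor $D$ whose associated valuation is $\val_w$ (up to positive scaling). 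The rational points of each cone are dense in that cone, and every cone of $\Trop(X)$ is rational because $\Trop(X)$ is a rational polyhedral fan (again by the fundamental theorem). Hence every $w\in\Trop(X)$ is a limit of scaled divisorial rays.

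\textbf{The main obstacle.} The crux is the realization step: showing that a given primitive lattice vector $w$ in $\Trop(X)$ is genuinely of the form $c[\val_D]$ for an honest divisorial valuation $\val_D$ coming from a \emph{divisor} on a normal $\Q$-factorial model $X'$, rather than merely an abstract valuation of $K(X)$. This is exactly the content of \cite[Theorem 2.4]{SturmfelsTevelev}, and I would either invoke it directly or reproduce its key step: one embeds $X$ into a toric variety $Y_\sigma$ associated to a fan containing the ray $\R_{\geq 0}w$, takes the closure $\overline{X}$, and identifies the intersection of $\overline{X}$ with the boundary divisor of $Y_\sigma$ corresponding to $w$; after resolving to a normal $\Q$-factorial model one obtains a prime divisor $D$ with $[\val_D]$ proportional to $w$. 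Care is needed to ensure $D$ dominates the right stratum and that the induced valuation computes $w$ on all characters $m\in M$ simultaneously, which is where the compatibility with the embedding into the intrinsic torus is used. Once the rational rays are shown to be divisorial, the density and closure argument finishes the proof.
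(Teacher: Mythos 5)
The paper offers no proof of this proposition at all: it is quoted as a known result, citing \cite[176]{HackingKeelTevelev} and \cite[Theorem 2.4]{SturmfelsTevelev}, and your proposal is essentially a correct reconstruction of that cited Sturmfels--Tevelev argument (easy inclusion via the valuative description of $\Trop(X)$ plus its closedness and cone structure; hard inclusion via density of rational rays in the rational polyhedral fan $\Trop(X)$ together with the toric-compactification realization of each primitive lattice vector as a divisorial valuation, up to positive scaling). So your approach matches the one the paper relies on, with the key realization step correctly identified as the substantive input.
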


For any compactification $\Xbar$ of $X\subseteq \torus$ by a simple normal crossing divisor $\partial X \coloneqq \Xbar \setminus X$ with irreducible components $D_1,\dots, D_m$, Hacking, Keel and Tevelev give an explicit construction of $\Trop(X)$ as follows (see \cite[Theorem 2.3]{HackingKeelTevelev}). 

Let $\Delta_{\partial X}$ be the \emph{boundary complex} of $(\Xbar,\partial X)$, i.e. a simplicial complex with vertices $\set{1,\dots,m}$ containing the simplex $\set{i_1,\dots, i_k}$ if and only if $D_{i_1} \cap \dots \cap D_{i_k}$ is non-empty.  The $m$ divisorial valuations $[\val_{D_1}],\dots, [\val_{D_m}]$ give rays in $N_\R$, and for each simplex $\sigma$ of $\Delta$ we form the cone $[\sigma] \coloneqq \mathrm{cone}([\val_{D_i}] \mid i \in \sigma)$ in $N_\R$.
\begin{theorem}[{\cite[Theorem 2.3]{HackingKeelTevelev}}]\label{theorem:snc_trop}
    For $X\subseteq \torus$ compactified to $\Xbar$ by a simple normal crossing divisor $D$, we have
    $$\Trop(X)=\cup_{\sigma \in \Delta} [\sigma].$$
\end{theorem}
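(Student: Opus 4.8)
The plan is to prove the two inclusions separately, using \cref{all_val} as the bridge between tropicalization and divisorial valuations. Throughout, I would use that for a character $m \in M$ the pairing $\langle [\val_D], m\rangle = \val_D(m|_X)$ records the order of vanishing of $m|_X$ along the divisor defining $\val_D$, and that since $X = \Xbar \setminus \partial X$ and $m|_X \in \Osheaf(X)^*$, the divisor of $m|_X$ on $\Xbar$ is supported on the boundary, $\div(m|_X) = \sum_i \langle [\val_{D_i}], m\rangle\, D_i$.

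For the inclusion $\cup_{\sigma\in\Delta}[\sigma] \subseteq \Trop(X)$, I would first note that each ray $[\val_{D_i}]$ lies in $\Trop(X)$ directly by \cref{all_val}, since $\val_{D_i}$ is a divisorial valuation. To fill in the whole cone of a simplex $\sigma = \{i_1,\dots,i_k\}$, I would use that the stratum $Z = D_{i_1}\cap\dots\cap D_{i_k}$ is nonempty by definition of $\Delta$, and that near a general point of $Z$ the SNC condition supplies local coordinates with $D_{i_j} = \{z_j = 0\}$. A weighted blow-up along $Z$ with positive integer weights $(a_1,\dots,a_k)$ produces an exceptional divisor $E$ whose divisorial valuation satisfies $\val_E(f) = \sum_j a_j\, \mathrm{ord}_{D_{i_j}}(f)$ for $f$ supported on the boundary, so that $[\val_E] = \sum_j a_j [\val_{D_{i_j}}]$. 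As the weights and rescalings range over all positive values, and as subsimplices $\tau\subsetneq\sigma$ of $\Delta$ (which remain in $\Delta$) account for the faces of $[\sigma]$, these vectors are dense in $[\sigma]$; since $\Trop(X)$ is closed and contains each such $[\val_E]$ by \cref{all_val}, it contains $[\sigma]$, and hence the whole union.

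For the reverse inclusion $\Trop(X) \subseteq \cup_{\sigma}[\sigma]$, I would again invoke \cref{all_val}: because the union is a finite union of closed cones, hence closed, it suffices to show that every divisorial valuation ray $[\val_E]$ lies in some $[\sigma]$. Given $\val_E$, consider its center $Z$ on the proper model $\Xbar$ and set $\sigma = \{i : Z \subseteq D_i\}$; since $Z \subseteq \cap_{i\in\sigma} D_i \neq \emptyset$ this is a simplex of $\Delta$. Writing any $f = m|_X$ locally at the generic point of $Z$ as $f = u\prod_{i\in\sigma} z_i^{b_i}$ with $b_i = \langle[\val_{D_i}],m\rangle$ and $u$ a unit in $\Osheaf_{\Xbar,Z}$ — the SNC hypothesis guarantees that only the $D_i$ with $i\in\sigma$ meet $Z$ — and using that $\val_E$ is centered at $Z$, so that $\val_E(u)=0$ and $c_i := \val_E(z_i) \geq 0$, gives $\langle [\val_E], m\rangle = \sum_{i\in\sigma} c_i \langle [\val_{D_i}], m\rangle$ for all $m$. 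Hence $[\val_E] = \sum_{i\in\sigma} c_i [\val_{D_i}] \in [\sigma]$.

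The main obstacle is the local analysis underlying both inclusions: correctly identifying the monomial structure of a divisorial valuation relative to the SNC boundary. In the second inclusion, the delicate point is that $\val_E$ vanishes on units at the generic point of its center $Z$ and is therefore determined on boundary-supported functions by the finitely many values $\val_E(z_i)$; making this rigorous may require passing to a common normal model dominating both $\Xbar$ and the model carrying $E$, and invoking the toroidal/resolution structure so that the factorization $f = u\prod z_i^{b_i}$ and the vanishing $\val_E(u)=0$ genuinely hold. In the first inclusion, the analogous care is needed to verify $[\val_E] = \sum_j a_j[\val_{D_{i_j}}]$ for weighted blow-ups and to justify the density argument filling each cone.
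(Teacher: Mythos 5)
The paper offers no proof of this statement: it is quoted directly from \cite[Theorem 2.3]{HackingKeelTevelev}, so the only comparison available is with the proofs in the cited literature. Your argument is correct, and it essentially reconstructs the standard argument there — closest to the Sturmfels--Tevelev route, which deduces the cone description from the divisorial-valuation criterion, i.e.\ from \cref{all_val}, exactly as you do. Your forward inclusion (monomial valuations realized by weighted blow-ups along boundary strata are divisorial, their vectors are the positive integer combinations $\sum_j a_j[\val_{D_{i_j}}]$, and density plus closedness of $\Trop(X)$ fills each cone $[\sigma]$) and your reverse inclusion (every divisorial valuation has a center $Z$ on the proper model $\Xbar$, and the factorization $m|_X = u \prod_{i \in \sigma} z_i^{b_i}$ at the generic point of $Z$ gives $[\val_E] = \sum_{i\in\sigma} \val_E(z_i)\,[\val_{D_i}] \in [\sigma]$) are both sound. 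Three small things to tidy. First, handle the degenerate case where the center $Z$ is contained in no $D_i$: then $m|_X$ is a unit in $\Osheaf_{\Xbar,Z}$ for every character $m$, so $[\val_E]=0$, which lies in every cone. Second, in the reverse inclusion what matters is not which divisors \emph{meet} $Z$ but which \emph{contain} it (equivalently, pass through its generic point); divisors meeting $Z$ properly are already invisible in $\Osheaf_{\Xbar,Z}$, and this is where the simplex $\sigma=\set{i \mid Z\subseteq D_i}$ comes from. Third, your closing worry about needing a common model to justify $\val_E(u)=0$ is unnecessary: by definition of the center, $\Osheaf_{\Xbar,Z}$ is dominated by the valuation ring of $\val_E$, so every unit of that local ring has value zero and every element of its maximal ideal has nonnegative value; no toroidal or resolution input is needed in the simple normal crossing case (consistently with the fact that, unlike \cref{cnc_trop}, this statement requires no hypothesis on the characteristic).
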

Under the assumption of working over a field of characteristic zero, the condition that the divisor $D$ is simple normal crossing can be weakened to being merely \emph{combinatorial normal crossing}, i.e. such that the intersection of any $l$ irreducible components is of codimension $l$.
\begin{theorem}[{\cite[Theorem 2.8]{CuetoGeomTrop}}]\label{cnc_trop}
    For $K$ a field of characteristic zero and $X\subseteq \torus$ compactified to $\Xbar$ by a combinatorial normal crossing divisor $D$ we have
    $$\Trop(X)=\cup_{\sigma \in \Delta} [\sigma].$$
\end{theorem}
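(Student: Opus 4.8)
The plan is to bootstrap from the simple normal crossing case (\cref{theorem:snc_trop}) using resolution of singularities, which is exactly where the characteristic zero hypothesis enters. First I would apply Hironaka's embedded resolution to the pair $(\Xbar, D)$ to obtain a proper birational morphism $\pi \colon \Xbar' \to \Xbar$ with $\Xbar'$ smooth, which is an isomorphism over the locus where $D$ is already simple normal crossing (in particular over $X$), and such that $D' \coloneqq \pi^{-1}(D)_{\mathrm{red}}$ is a simple normal crossing divisor. Then $\Xbar'$ is a simple normal crossing compactification of $X$, so \cref{theorem:snc_trop} already gives $\Trop(X) = \cup_{\tau \in \Delta'} [\tau]$, where $\Delta'$ is the boundary complex of $(\Xbar', D')$. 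The components of $D'$, which I label $D'_1,\dots,D'_n$, are the strict transforms $\wtil{D_i}$ of the $D_i$ together with the exceptional divisors $F_1, \dots, F_r$.

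The heart of the argument is then to show that this refined union of cones has the same support as the original one, that is, $\cup_{\tau \in \Delta'}[\tau] = \cup_{\sigma \in \Delta}[\sigma]$. On rays, a strict transform induces the same divisorial valuation as its image, so $[\val_{\wtil{D_i}}] = [\val_{D_i}]$; and each exceptional divisor $F_j$ lies over some stratum $\cap_{i \in I_j} D_i$, so its valuation restricts on $M$ to a nonnegative integer combination $\val_{F_j} = \sum_{i \in I_j} a_{ji}\, \val_{D_i}$ of the original valuations, giving $[\val_{F_j}] \in \cone([\val_{D_i}] \mid i \in I_j) = [I_j]$ with $I_j \in \Delta$. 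The inclusion $\cup_\tau [\tau] \subseteq \cup_\sigma[\sigma]$ then follows: for a simplex $\tau \in \Delta'$, choosing a point $p'$ in $\cap_{e \in \tau} D'_e$ and setting $p = \pi(p')$, every component of $D'$ through $p'$ lies over a component of $D$ through $p$, and the latter form a simplex $S \in \Delta$; hence every ray of $[\tau]$ lies in $[S]$, so $[\tau] \subseteq [S]$.

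For the reverse inclusion $\cup_\sigma[\sigma] \subseteq \Trop(X)$, I would argue that the resolution, restricted over a single stratum of $D$, realizes an iterated star subdivision of the cone complex $\{[\sigma]\}$: each blow-up of a center inside a non--simple--normal--crossing stratum introduces an exceptional ray $[\val_F]$ lying in the relative interior of the corresponding cone, and star subdivision at such a ray preserves the support of the complex. Since \cref{theorem:snc_trop} identifies the subdivided support with $\Trop(X)$, every cone $[\sigma]$ is thereby covered. Alternatively, one may invoke \cref{all_val} and check directly that the relative interior of each $[\sigma]$ consists of quasi-monomial valuations centered on $\cap_{i\in\sigma} D_i$, which, having rational weights, are divisorial and hence lie in $\Trop(X)$.

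The main obstacle is this reverse inclusion, together with the combinatorial bookkeeping of how the boundary complex changes under resolution. What must be controlled is that the exceptional divisors produced when resolving a non--simple--normal--crossing stratum genuinely subdivide—rather than merely sit inside—the cone of that stratum, so that no part of $[\sigma]$ is lost. This rests on exhibiting a local toric (monomial) model of the resolution along the generic point of each stratum, where the expected--codimension hypothesis defining combinatorial normal crossing guarantees that the local equations of the $D_i$ extend to a regular system of parameters and the blow-ups become toric star subdivisions.
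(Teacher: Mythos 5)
First, a caveat on the comparison: the paper does not actually prove this statement --- it is imported verbatim from \cite[Theorem 2.8]{CuetoGeomTrop}, the only commentary being \cref{cnc_no_char}, which observes that for surfaces the characteristic-zero hypothesis can be dropped because resolution of singularities is available. So your proposal can only be measured against Cueto's proof, and your overall architecture --- embedded resolution of $(\Xbar,D)$ to a simple normal crossing model (this is indeed where characteristic zero enters), \cref{theorem:snc_trop} applied to that model, then a comparison of the two cone complexes --- is the same as the one used there. Your forward inclusion $\Trop(X)=\bigcup_{\tau\in\Delta'}[\tau]\subseteq\bigcup_{\sigma\in\Delta}[\sigma]$ is correct and complete: strict transforms induce the same divisorial valuations, and since $\div(\pi^*u)=\pi^*\div(u)$ for every unit $u$ on $X$, each $[\val_{F_j}]$ is a nonnegative integer combination of the $[\val_{D_i}]$ over the $D_i$ containing the center of $F_j$; the incidence bookkeeping via $p=\pi(p')$ is also sound. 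Note that this direction uses the combinatorial normal crossing hypothesis nowhere.

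The genuine gap is in the reverse inclusion, which you correctly isolate as the crux but justify with a false claim: combinatorial normal crossing does \emph{not} guarantee that the local equations of the $D_i$ extend to a regular system of parameters, not even at the generic point of a stratum. The basic counterexample is a simple tangency of two smooth curves on a smooth surface, locally $f_1=x$ and $f_2=x-y^2$: the intersection has the expected codimension $2$, so CNC holds, but $f_1$ and $f_2$ have equal differentials at the point, hence are not part of any regular system of parameters; there is no local toric model in these equations, and a ``monomial valuation with weights $(w_1,w_2)$ in $f_1,f_2$'' is not even well-defined. This is precisely the situation the theorem adds over \cref{theorem:snc_trop}, and the one relevant to this paper (a conic tangent to a line); if CNC did imply your claim, the statement would essentially reduce to the snc case. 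The same objection defeats your alternative sketch, since quasi-monomial valuations require an snc frame to be defined, and the star-subdivision picture is not literally correct either: in the tangency example the first blow-up produces the interior ray $[\val_{D_1}]+[\val_{D_2}]$, but the second produces $[\val_F]=2[\val_{D_1}]+2[\val_{D_2}]$, which lies on an already-existing ray rather than in the relative interior of a two-dimensional cone. What must be proved is that the cones of the snc model cover \emph{all} of $[\sigma]$ rather than a proper closed subset, and nothing in ``blow-ups are subdivisions'' gives this for free. A correct argument either exhibits, for each rational point $w=\sum_{i\in\sigma}w_i[\val_{D_i}]$ of $[\sigma]$, an honest divisorial valuation with vector $w$ and then invokes \cref{all_val} --- in the tangency example, monomial valuations in the genuine regular systems of parameters $(x,y)$ when $w_1\geq w_2$ and $(x-y^2,y)$ when $w_1\leq w_2$ --- or else tracks the resolution closely enough, using CNC (for instance, it guarantees that exactly $|\sigma|$ components contain each stratum), to rule out any loss of support over $[\sigma]$. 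That analysis is the substance of Cueto's proof, and it is the part missing from yours.
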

\begin{remark}\label{cnc_no_char}
    The varieties considered in the present paper are surfaces, which admit resolutions of singularities (see \cite[\href{https://stacks.math.columbia.edu/tag/0BIC}{Tag 0BIC}]{stacks-project}), and as such the proofs given in \cite[Theorem 2.8]{CuetoGeomTrop} carry through even without assuming that the characteristic of the field $K$ is zero. 
\end{remark}

\subsection{Tropical modifications}\label{section:trop_modif}
We now recall the definition of tropical varieties in the case of fans, and then generalities on tropical divisors and tropical modifications, see \cite{MikhalkinRauBook} and \cite{AllermannRau} for details. 

\begin{definition}[{\cite[Definition 3.3.1.]{MaclaganSturmfels}, \cite[Definition 5.7]{BIMS14}}]
    A pure rational fan $\Delta$ in $\R^n$ of dimension $d$ is a \emph{tropical variety} if it satisfies the following \emph{balancing condition}. First, the top-dimensional cones $\sigma \in \Delta^d$ are equipped with integer weights $\omega(\sigma)$, giving a \emph{weight function} $\omega$. For each cone $\tau$ of codimension one, let $L$ be the subspace parallel to $\tau$, and since $\tau$ is rational, $L\cap \Z^n$ is a free lattice, with a quotient $N(\tau) = \Z^n/(L\cap \Z^n)$. Each top-dimensional cone $\sigma$ containing $\tau$ corresponds to a ray in $N(\tau)\otimes_\Z \R$, and let $v_\sigma$ be the first lattice point on this ray. The fan $\Delta$ is \emph{balanced at $\tau$} if 
    $$\sum \omega(\sigma) v_\sigma = 0,$$
    and $\Delta$ is \emph{balanced} if it is balanced for all codimension one cones.
\end{definition}

\begin{definition}
    Let $\Delta'$ be a tropical fan of dimension $d$ in $\R^n$ with weight function $\omega$.
    A \emph{tropical rational function} $\phi\colon \Delta \to \R$ is a function so that $\Delta'$ can be subdivided into $\Delta$ such that, for each cone $\sigma$ of $\Delta$, the restricted function $\phi\mid_\sigma$ is integer affine, i.e. there is some $b_\sigma\in \R$ and integer matrix $A_\sigma = (a_1, \dots, a_n) \in \Z^{1\times n}$ such that $\phi\mid_\sigma (\varx) = A_\sigma \cdot \varx + b_\sigma$. 
\end{definition}

For each cone $\gamma$ of $\Delta$, let $\lattice{\gamma}\subseteq N$ be the (saturated) lattice parallel to $\sigma$. Each facet $\sigma$ of $\Delta$ is equipped with a weight $\omega(\sigma)$. Since $\Delta$ is tropical, for each codimension one cone $\tau$, and each facet $\sigma$ containing $\tau$ (using notation $\tau \prec \sigma$) one may find a vector $v_{\sigma/\tau}\in L(\sigma)$ such that $\lattice{\sigma} = \lattice{\tau} + \Z v_{\sigma/\tau}$ and moreover $\sum_{\tau \prec \sigma} \omega(\sigma) v_{\sigma/\tau}=0$. 
\begin{definition}
    The \emph{divisor $\tropdiv(\phi)$ of $\phi$ on $\Delta$} is the weighted polyhedral complex whose maximal cones are the codimension one cones $\tau$ such that $\omega_\phi(\tau)\neq 0$, where $\omega_\phi(\tau)$ is the weight $\omega_\phi(\tau) \coloneqq \sum_{\tau \prec \sigma} \omega(\sigma) A_{\sigma}\cdot v_{\sigma/\tau}$.
\end{definition}

We now recall some notions for modifing tropical fans, namely two variations on the idea of a modification of a tropical fan, which will be used in later sections.
\begin{definition}
    The \emph{tropical modification} $\tm(\Delta,\tropdiv(\phi))$ of $\Delta$ along the tropical rational function $\phi$ is a fan in $\R^{n+1}$, with the following cones.
    \begin{itemize}
        \item For each cone $\sigma$ of $\Delta$, let $\wtil{\sigma} \coloneqq \set{(\varx,\phi(\varx))\in \R^{n+1}  \mid \varx \in \sigma}$, and define a weight $\wtil{\omega}(\wtil{\sigma})=\omega(\sigma)$. 
        \item For each cone $\tau$ of $\tropdiv(\phi)$, let $\tau_\geq = \set{ (\varx,y)\in \R^{n+1} \mid x\in \tau, \; y \geq \phi(\varx)}$, with weight $\wtil{\omega}(\tau_\geq)=\omega_\phi(\tau)$.
    \end{itemize}
    The underlying set of $\tm(\Delta, \tropdiv(\phi))$ is the union 
    $$\tm(\Delta, \tropdiv(\phi)) = (\cup_{\sigma \in \Delta} \wtil{\sigma}) \cup (\cup_{\tau \in \tropdiv(\phi)} \tau_\geq).$$
    Using the weights defined above, $\tm(\Delta, \tropdiv(\phi))$ is a balanced polyhedral complex.
\end{definition}
\begin{definition}
    The \emph{closed tropical modification} $\ctm(\Delta,\tropdiv(\phi))$ of $\Delta$ along the tropical rational function $\phi$ is a polyhedral complex in $\R^{n}\times (\R\cup \set{\infty})$, with the following polyhedra:
    \begin{itemize}
        \item a cone $\wtil{\sigma}$ for each $\sigma \in \Delta$, 
        \item a polyhedron $\tau_\geq$ for each cone $\tau$ of $\tropdiv(\phi)$, and
        \item for each cone $\tau$ of $\tropdiv(\phi)$, the corresponding ``face at infinity'' $\tau_\infty$ is the polyhedron $\tau_\infty = \set{ (\varx,\infty )\in \R^n \times (\R\cup \set{\infty}) \mid x\in \tau}$. 
    \end{itemize}
    The closed tropical modification is the polyhedrally-decomposed set
    $$\ctm(\Delta, \tropdiv(\phi)) = (\cup_{\sigma \in \Delta} \wtil{\sigma}) \cup (\cup_{\tau \in \tropdiv(\phi)} \tau_\geq) \cup (\cup_{\tau \in \tropdiv(\phi)} \tau_\infty),$$
    where again the same weights make $\ctm(\Delta, \tropdiv(\phi))$ a balanced polyhedral complex.
\end{definition}


\subsection{Tropical (co)homology}\label{section:tropical_co_homology}
Tropical homology and cohomology, as introduced in \cite{IKMZ}, is an invariant of a rational polyhedral space. While there are several different approaches, see for instance \cite{Lefschetz11,AminiPiquerezFans,AminiPiquerezHodge,TPDspace,GrossShokriehSheaf,JSS19,AAPS23}, here we briefly recall the definition of tropical (co)homology as most conveniently defined in the case of fans.

Let $\Sigma$ be a $d$-dimensional fan in $N$, with unique minimal $0$-dimensional cone denoted $\mincone$. Recall from \cref{section:trop_modif} that $\lattice{\sigma}\subseteq N$ is the (saturated) lattice parallel to $\sigma$. For each $p=1,\dots,d$, the \emph{$p$-th multi-tangent space} is the vector subspace
$$\F_p (\sigma) \coloneqq \sum_{\sigma \subface \gamma} \bigwedge^p \lattice{\gamma} \otimes_\Z \Q \subseteq \bigwedge^p N \otimes_\Z \Q,$$
where the sum is taken over all cones $\gamma$ of $\Sigma$ containing $\sigma$ as a face. Moreover, for $\tau$ a face of the cone $\sigma$, there is an inclusion $\iota_{\tau \subface \sigma} \colon \F_p (\sigma) \to \F_p (\tau)$ as vector subspaces.

In the case of a fan, the \emph{tropical cohomology} groups are given by $$H^{p,0}(\Sigma) = \F^p (\mincone) \coloneqq \Hom_\Q(\F_p (\mincone),\Q),$$
where $\mincone$ is the unique minimal $0$-dimensional cone of $\Sigma$, for $p=0,\dots,d$. Similarly, the \emph{tropical homology} groups defined by $H_{p,0}(\Sigma) = \F_p(\mincone)$.

Equip each cone $\sigma$ with an orientation, and for $\tau$ a face of $\sigma$, let $\sign{\tau}{\sigma}$ be $1$ if the chosen orientations of $\tau$ and $\sigma$ are compatible, and $-1$ otherwise. For each $p=1,\dots,d$, we may define the \emph{$p$-th tropical Borel--Moore complex} $C_{p,\bullet}^{BM}(\Sigma)$ as follows
\[\begin{tikzcd}[column sep=8pt]
	0 & {\oplus_{\alpha \in \Sigma_d} \F_p(\alpha)} & {\oplus_{\beta \in \Sigma_{d-1}} \F_p(\beta)} & \cdots & {\oplus_{\rho \in \Sigma_1} \F_p(\rho)} & {\F_p(\mincone)} & {0.}
	\arrow["{\partial_1}", from=1-5, to=1-6]
	\arrow[from=1-6, to=1-7]
	\arrow["{\partial_d}", from=1-2, to=1-3]
	\arrow["{\partial_{d-1}}", from=1-3, to=1-4]
	\arrow["{\partial_2}", from=1-4, to=1-5]
	\arrow[from=1-1, to=1-2]
\end{tikzcd}\]
The differential $\partial_k$ is defined as the sum of its components $(\partial_k)_{\gamma,\delta} \colon \F_p(\gamma) \to \F_p(\delta)$, which is given by $\sign{\delta}{\gamma}\cdot \iota_{\delta \subface \gamma}$ if $\delta$ is a face of $\gamma$, and $0$ otherwise. 
\begin{definition}
    The \emph{tropical Borel--Moore homology} group $H_{p,q}^{BM}(\Sigma)$ is the $q$-th homology group of the complex $C_{p,\bullet}^{BM}(\Sigma)$.
\end{definition}

For any tropical fan $\Sigma$ of dimension $d$, the weight function $\omega$ gives rise to a \emph{fundamental class} $[\Sigma,\omega] \in H_{d,d}^{BM}(\Sigma)$, and there are cap product maps $\frown [\Sigma, \omega] \colon H^{p,q}(\Sigma) \to H_{d-p,d-q}^{BM}(\Sigma)$. 
\begin{definition} Let $\Sigma$ be a tropical fan with fundamental class $[\Sigma, \omega]\in H_{d,d}^{BM}(\Sigma)$.
    \begin{itemize}
        \item If the maps $\frown [\Sigma, \omega] \colon H^{p,q}(\Sigma) \to H_{d-p,d-q}^{BM}(\Sigma)$ are isomorphisms for all $p$ and $q$, $\Sigma$ is said to satisfy \emph{tropical Poincaré duality}
        \item If each reduced star $\Star{\Sigma}{\gamma}$ of $\Sigma$ at a cone $\gamma$ satisfies tropical Poincaré duality, including $\Star{\Sigma}{\mincone}=\Sigma$, then $\Sigma$ is called a \emph{tropical homology manifold}. 
    \end{itemize}
\end{definition}
 Recall that \emph{reduced star} $\Star{\Sigma}{\gamma}$ of $\Sigma$ at a cone $\gamma$ is the fan in $N/\lattice{\gamma}$ consisting of the projection of the cones $\delta$ of $\Sigma$ containing $\gamma$. Note that this is sometimes called simply the \emph{star} in the context of toric geometry, see e.g. \cite[Section 3.1]{Fulton}, we make the distinction for compatibility with \cite{TPDspace}.

\section{Tropicalizing complements of arrangements of curves}\label{section:curve_arr}
We now turn to arrangements of curves $\arrCur \coloneqq \set{L_0,L_1,L_2,\curve_1,\dots, \curve_m}$ in the plane $\plane{K}$, all distinct and irreducible, with equations $\curve_k \colon f_k(\varx) =0$, where we include the coordinate axes $L_i \colon x_i = 0$ for $i=0,1,2$. The \emph{complement of the arrangement in the torus} $X \coloneqq \plane{K} \setminus \cup_{D\in \arrCur} D$ is a very affine variety, and it follows from e.g. \cite[Lemma 6.1]{HackingKeelTevelev} that its intrinsic torus $\intTor{X}$ is isomorphic to $\torusCoords{m+2}$ generated by the $f_i$ and coordinates $x_1,\, x_2$ for $\torusCoords{2}\subset \plane{K}$. We will consider the tropicalization of $X$ as a subvariety of its intrinsic torus.

In \cite[Section 5]{CuetoGeomTrop} and \cite[Proposition 5.3]{SturmfelsTevelev}, the following algorithm for computing the tropicalization of $X$ using geometric tropicalization is presented. One iteratively blows up the intersection points of curves in the arrangement until one obtains a combinatorial normal crossing divisor, from which the cones of the tropicalization are extracted. More precisely, one proceeds as follows:
\begin{enumerate}
    \setcounter{enumi}{-1}
    \item Let $\arrCur$ be an arrangement of curves on a surface $\Xbar$, and let $X \coloneqq \Xbar \setminus \cup_{D\in \arrCur} \; D$. Proceed to \eqref{algo:step_1}.
    \item\label{algo:step_1} If there is a point $\varp \in \Xbar$ such that $\varp$ is contained in at least three of the curves of $\arrCur$, proceed to \eqref{algo:step_2}, otherwise proceed to \eqref{algo:step_3}.
    \item\label{algo:step_2} Blow up the point $\varp$, replace $\Xbar$ by the blown up surface $\Xbar'$, and replace the arrangement $\arrCur$ with the arrangement $\arrCur' \coloneqq \set{D'}_{D\in \arrCur} \cup \set{E}$ of curves on $\Xbar'$, where $E$ is the exceptional divisor of the blow up, and $D'$ is the strict transform of $D$. Note that $X'\coloneqq \Xbar' \setminus \cup_{C \in \arrCur'} C$ is isomorphic to $X\subset \Xbar$, and so replace $X$ with $X'$. Return to \eqref{algo:step_1}.
    \item\label{algo:step_3} Since no three curves of $\arrCur$ have a common intersection point, we may view $\arrCur$ as a combinatorial normal crossing divisor compactifying $X$ into the surface $\Xbar$.
\end{enumerate}

Using the above procedure, by \cref{cnc_trop} the tropicalization $\trop(X)$ can be equipped with a fan structure consisting of one ray for each element $D$ of $\arrCur$, and one two-dimensional cone between the rays for $D$ and $D'$ if these two curves intersect in $S$. Moreover note that one may ignore the restriction on the characteristic of $K$ by \cref{cnc_no_char}. Therefore, to complete the description of the tropicalization $\trop(X)$, it remains to find the directions of these rays.

By \cite{HackingKeelTevelev}, page 182, there is a short exact sequence of abelian groups 
\begin{equation}\label{sesPic}
    0 \to \Pic(\Xbar)^\vee \to \bigoplus_{D_i \in \partial X} \Z\cdot D_i \to (\Osheaf(X)^*/K^*)^\vee \to 0,
\end{equation}
where the first map is dual to the map $D_i \mapsto [D_i]\in \Pic(\Xbar)$, and the second is the dual of the map defined on generators by $f_i \mapsto \div(f_i)=D_i$. This allows us to compute the directions of the rays associated to the divisors solely in terms of  the intersection properties of the curves $D$ in $\arrCur$. We illustrate this with the following examples.

\begin{figure}
    \begin{minipage}{.5\textwidth}
        \centering
        \begin{tikzpicture}
            \node (A1) at (-1,0)   {$L_1$};
            \node (A2) at (3,0)   {};

            \node (B1) at (0,3)   {$L_2$};
            \node (B2) at (0,-1)  {};
            
            \node (C1) at (-1,3)  {$L_3$};
            \node (C2) at (3,-1)  {};
            
            \draw (A1) -- (A2);
            \draw (B1) -- (B2);
            \draw (C1) -- (C2);

            \coordinate (X) at (0,0);
            \coordinate (Y) at (2,0);
            \coordinate (Z) at (0,2);
            
            \tkzDefPoint(0,0){X}\tkzDefPoint(2,0){Y}\tkzDefPoint(0,2){Z}
            
            \tkzCircumCenter(X,Y,Z)\tkzGetPoint{O}
            \tkzDrawArc(O,X)(Z)
            \tkzDrawArc(O,Z)(Y)

            \node at (-0.7,1) {$\curve$};
        \end{tikzpicture}
    \end{minipage}%
    \begin{minipage}{.5\textwidth}
        \centering
        \begin{tikzpicture}

            \draw (-0.5,0) -- (4.5,0);
            \draw (-0.5,1) -- (4.5,1);
            \draw (1.5,2) -- (4.5,2);
            \draw (-0.5,3) -- (2.5,3);
            
            \draw (0,-0.5) -- (0,3.5);
            \draw (2,0.5) -- (2,3.5);
            \draw (4,-0.5) -- (4,3.5);
            
            \node at (-0.7,0) {$\widetilde{L_1}$};
            \node at (4.7,2) {$\widetilde{L_3}$};
            \node at (-0.7,3) {$\widetilde{L_2}$};
            \node at (-0.7,1) {$\widetilde{\curve}$};
            
            \node at (2,3.7) {$E_1$};
            \node at (0,-0.7) {$E_3$};
            \node at (4,-0.7) {$E_2$};
        \end{tikzpicture}
    \end{minipage}
            
            
    \captionof{figure}{Curve arrangement from \cref{linesandconic} and the compactifying divisor.}
    \label{fig:linesandconic}
\end{figure}

\begin{example}\label{linesandconic}
    
    Consider the arrangement of curves consisting of the coordinate axes $L_1,L_2,L_3 \subset \plane{K}$, as well as the conic $\curve$ passing through the three points of intersection $L_i\cap L_j$, as shown in \cref{fig:linesandconic}. The complement of the arrangement $X\coloneqq \plane{K} \setminus (L_1\cup L_2 \cup L_3 \cup \curve)$ is not compactified by a simple normal crossing divisor in $\plane{K}$, so we must blow up the three points $L_i\cap L_j$. This yields a configuration including three exceptional divisors $E_1$, $E_2$ and $E_3$, also displayed in \cref{fig:linesandconic}.
    
    Blowing up the plane $\plane{K}$ in the three points $L_i \cap L_j$ to compactify $X$ into $\Xbar$ gives the Picard group $\Pic(\Xbar)\cong \langle H \rangle \oplus_{i=1}^3 \langle e_j\rangle$, where $H$ is the pullback of the line class in $\Pic(\plane{K})$. Moreover, the boundary $\partial X$ consists of seven curves, and the intrinsic torus $\Osheaf(X)^* /K^*$ is of dimension three. The short exact sequence from \eqref{sesPic} is then
    $$0 \to \langle H\rangle \oplus_{i=1}^3 \langle e_j\rangle  \xrightarrow{\phi} \oplus_{i=1}^3 \langle \widetilde{L_i}\rangle  \oplus \langle \widetilde{\curve}\rangle \oplus_{i=1}^3 \langle e_j\rangle   \xrightarrow{\pi} \Osheaf(X)^*/K^*,$$
    where the lattice map $\phi$ is given by the matrix
    \begin{equation*}
        \begin{pmatrix}
        1&0&-1&-1\\
        1&-1&0&-1\\
        1&-1&-1&0\\
        2&-1&-1&-1\\
        0&1&0&0\\
        0&0&1&0\\
        0&0&0&1
        \end{pmatrix}.
    \end{equation*}
    To compute the rays corresponding to each divisor $D$ of the boundary $\partial X$, we consider $\Osheaf(X)^*/K^*$ as the cokernel of $\phi$ under the quotient map $\pi$. Somewhat abusing notation, each column yields a relation in $\Osheaf(X)^*/K^*$ as follows
    \begin{align*}
        \pi(E_1)  &= \phantom{\pi(\widetilde{L_1}) +} \pi(\widetilde{L_2}) + \pi(\widetilde{L_3}) + \pi(\widetilde{C}) \\
        \pi(E_2)  &= \pi(\widetilde{L_1}) \phantom{+ \pi(\widetilde{L_2})} + \pi(\widetilde{L_3}) + \pi(\widetilde{C}) \\
        \pi(E_3)  &= \pi(\widetilde{L_1}) + \pi(\widetilde{L_2}) \phantom{+ \pi(\widetilde{L_3})} + \pi(\widetilde{C})
    \end{align*}
    as well as the equality
    $$\pi(\widetilde{L_1})+\pi(\widetilde{L_2})+\pi(\widetilde{L_3})+2\pi(\widetilde{C})=0.$$
    In particular, the rays of the fan corresponding to the divisors are given by the columns of following matrix
    $$\begin{pmatrix}
        -1 & 1 & 0 & 0 & 1 & -1 & 0 \\
        -1 & 0 & 1 & 0 & 1 & 0 & -1 \\
        -2 & 0 & 0 & 1 & 1 & -1 & -1
    \end{pmatrix},$$
    and embedding the dual complex $\Delta_{\partial X}$ as the cone over these rays, we observe that multiple of the rays are contained inside a two-dimensional face, and we are left with only the four rays
    $$\begin{pmatrix}
        -1&0&0&1\\
        0&-1&0&1\\
        -1&-1&1&1
    \end{pmatrix},
    $$
    which gives a fan with support equal to the tropicalization of $X$.
\end{example}

\section{Axioms for abstract arrangements of curves}\label{section:arroids}
In light of the description of the tropicalization of an arrangement of curves in \cref{section:curve_arr}, we now propose an axiomatization generalizing arrangements of curves. We show that for a given arrangement, the axiomatization data extracted from the arrangement is sufficient to compute its tropicalization in its intrinsic torus.

\subsection{Axioms}
Let $\groundset\coloneqq \set{1, \dots, n}$ be a finite set, called the \emph{ground set} or \emph{divisors}. Let $r$ be an integer, either $1$ or $2$, called the \emph{(reduced) rank}. There is a \emph{degree} function $d\colon \groundset \to \Z_{>0}$, and for each element $i\in \groundset$, we will use the notation $d_i \coloneqq d(i)$. Let $\points$ be a multiset of subsets of $\groundset$, called the \emph{points}. 
Each point $\pbf \in \points$ is equipped with an \emph{intersection multiplicity function} $\mul{\pbf} \colon \pbf^r \to \Z$, satisfying the two conditions:
\begin{itemize}
    \item for $(i_1,\dots, i_r) \in \pbf^r$ a vector, there are bounds $$ 1\leq \mul{\pbf}(i_1,\dots, i_r) \leq \max_{k \in \set{i_1,\dots, i_r}}(d_k),$$ 
    \item $\mul{\pbf}$ is invariant under permutation of the coordinates. 
\end{itemize}
The multiset of points $\points$ satisfies the \emph{Bézout property} if for all vectors $(i_1,\dots, i_r) \in \groundset^r$, there are exactly $d_{i_1} d_{i_2} \cdots d_{i_r}$ points of $\points$ containing $\set{i_1,\dots, i_r}$, when counting with intersection multiplicity, i.e.
$$\sum_{\pbf \supseteq \set{i_1,\dots, i_r}} \mul{\pbf}(i_1,\dots, i_r) = d_{i_1} d_{i_2} \cdots d_{i_r},$$
where the sum is taken over all points $\pbf \in \points$. Let $\mulSet \coloneqq \set{\mul{\pbf} \mid \pbf \in \points}$ denote the set of multiplicity functions associated to $\points$.
\begin{definition}
    An \emph{arroid} is a tuple $(\groundset, d, \points, \mulSet)$ satisfying the Bézout property.
\end{definition}
The data contained in an arroid of rank two records the intersection properties of an arrangement of curves in the plane. More generally, any loop-free rank three matroid gives rise to a rank two arroid, as shown in the following example.
\begin{example}
    Let $M$ be a loop-free matroid of rank three on a ground set $E\coloneqq \set{1,\dots,n}$, with rank function $r\colon 2^E \to \N$. We construct an arroid $\arroid$ as follows. The ground set $\groundset$ of $\arroid$ is taken to be $E$, equipped with the degree function $d$ taking constant value $1$. The multiset of points $\points$ of this arroid is exactly the set of rank two flats of the matroid, where the multiplicity functions are also constant of value $1$. For the Bézout property take a pair $(i,j)$ of elements in the ground set, and note that since $M$ is loop-free, $\set{i}$ is a flat. By the flat partition property, there is exactly one rank three flat containing $j$, thus there is exactly one rank three flat, i.e. point of the arroid, containing $\set{i,j}$ as required.
\end{example}
By the above construction, any matroid has a corresponding arroid. In particular, nonrealizable matroids are not realizable as arroids either, when requiring the specified curves to satisfy the conditions imposed by the degree function and multiplicity functions, i.e. that they must be lines intersecting in one point. However, when removing these restrictions, it may be possible to find curve arrangements having the intersection properties of the original unrealizable matroids. For nonrealizability in higher degrees, see \cref{ex:non-pascal}.

For more general examples, consider the following:
\begin{example}
    We return to \cref{linesandconic}. Let $\groundset = \set{1,2,3,4}$, where $i=1,2,3$ corresponds to the line $L_i$, and $4$ corresponds to the conic $\curve$. The degree function $d\colon \groundset \to \Z_{>0}$ is given by $d_i =1$ for $i<4$ and $d_4=2$. The multiset of points is 
    $\points = \set{\set{1,2,4},\set{1,3,4},\set{2,3,4}},$ corresponding to the set of intersection points of the lines and conic. The Bézout property of $\points$ follows directly from the Bézout theorem for $\plane{K}$.
\end{example}
\begin{figure}
    \begin{minipage}[b]{.5\textwidth}
        \centering
        \begin{tikzpicture}
            \node (A1) at (-1,0)   {$L_1$};
            \node (A2) at (3,0)   {};

            \node (B1) at (0,3)   {$L_2$};
            \node (B2) at (0,-1)  {};
            
            \node (C1) at (-1,3)  {$L_3$};
            \node (C2) at (3,-1)  {};
            
            \draw (A1) -- (A2);
            \draw (B1) -- (B2);
            \draw (C1) -- (C2);

            \coordinate (X) at (-0.2,-0.2);
            \coordinate (Y) at (2.3,0);
            \coordinate (Z) at (0,2.3);

            \tkzDefPoint(-0.2,-0.2){X}\tkzDefPoint(2.3,0){Y}\tkzDefPoint(0,2.3){Z}
            
            \tkzCircumCenter(X,Y,Z)\tkzGetPoint{O}
            \tkzDrawArc(O,X)(Z)
            \tkzDrawArc(O,Z)(Y)


            \node at (-1,1) {$\curve$};
        \end{tikzpicture}
    \end{minipage}
    \captionof{figure}{Curve arrangement from \cref{ex:generic_lines_and_conic}.}
    \label{fig:generic_lines_and_conic}
\end{figure}
\begin{example}\label{ex:generic_lines_and_conic}
    Consider now the generic arrangement of three lines and a conic displayed in \cref{fig:generic_lines_and_conic}. As before, let $\groundset = \set{1,2,3,4}$, where $i=1,2,3$ corresponds to the line $L_i$, and $4$ corresponds to the conic $\curve$, with the degree function $d\colon \groundset \to \Z_{>0}$ given by $d_i =1$ for $i<4$ and $d_4=2$. The multiset structure of the points is now more evident as
    $$\points = \set{\set{1,2},\set{1,3},\set{2,3},\set{1,4},\set{1,4},\set{2,4},\set{2,4},\set{3,4},\set{3,4}},$$
    which reflects that each pair of lines meets in a single point, and the conic meets each line in two points.
\end{example}
In general, given an arrangement of curves $\arrCur=\set{C_1,\dots, C_n}$ on $\plane{K}$, we can create an arroid of rank two as follows. Let $\groundset\coloneqq \set{1,\dots,n}$ and the function $d$ takes $i$ to the degree of the curve $d_i$. For any subset $\pbf\subseteq \arrCur$ such that $C_{I}= \cap_{i \in I} \; C_i \neq \emptyset$, let $\pbf_{I_1}, \dots, \pbf_{I_k}$ be the components of $C_{I}$. For each $\pbf_{I_j}$, we include $\pbf$ as an element in $\points$ with multiplicity function $\mul{\pbf}$ given by the intersection multiplicities of the curves at the component $\pbf_{I_j}$. Thus the point $\pbf$ may appear multiple times with potentially different multiplicity functions. Summarizing, we have show the following proposition.
\begin{proposition}
   Given a curve arrangement $\arrCur$, one can construct a rank two arroid, denoted $\arroid_\arrCur$.
\end{proposition}
Using the above construction, one can also find further examples of non-realizable arroids, for instance by specifically omitting relations that must occur given specified curves and arrangements from classical geometry, as in the next example. 
\begin{figure}
    \begin{tikzpicture}[line cap=round,line join=round,>=triangle 45,x=0.5cm,y=0.5cm]
        \clip(-3,-1.5) rectangle (5.5,5.5);
        \draw [rotate around={0.16511706311106467:(1.1118385024615836,2.0014546547928176)}] (1.1118385024615836,2.0014546547928176) ellipse (3.992296433236598 and 2.904678292888097);
        \draw [domain=-6.533837424058322:21.57561093560145] plot(\x,{(--7.289096040106482-4.892332358437676*\x)/3.7813469178192216});
        \draw [domain=-6.533837424058322:21.57561093560145] plot(\x,{(--16.334590594434452-4.083647648608613*\x)/5.666401996454482});
        \draw [domain=-6.533837424058322:21.57561093560145] plot(\x,{(-2.298279412146368-5.513594205850183*\x)/-0.9935098714566144});
        \draw [domain=-6.533837424058322:21.57561093560145] plot(\x,{(--19.465117260981522-4.8662793152453805*\x)/3.539970293402479});
        \draw [domain=-6.533837424058322:21.57561093560145] plot(\x,{(-0.5732962341811039-5.246498183760384*\x)/-3.438213015221819});
        \draw [domain=-6.533837424058322:21.57561093560145] plot(\x,{(--12.076032390511482-5.407868002984644*\x)/-0.7897879289979861});
        \draw [dash pattern=on 3pt off 3pt,domain=-8:21] plot(\x,{(-5.17234869016573-0.022300850608855782*\x)/-2.584828201125548});
        \begin{scriptsize}

            \draw (-1.6,0.45) node {$C$};
            \draw (-1.6,3.6) node {$1$};
            \draw (-0.5,3.6) node {$2$};
            \draw (0.5,3.6) node {$3$};
            \draw (1.6,3.6) node {$4$};
            \draw (2.5,3.6) node {$5$};
            \draw (3,3.6) node {$6$};
                


            \draw (-0.15,2.4) node {$\mathbf{a}$};
            \draw (1.2,2.4) node {$\mathbf{b}$};
            \draw (2.45,2.4) node {$\mathbf{c}$};
            
            \draw (-2,2.4) node {$L$};
        \end{scriptsize}
    \end{tikzpicture}
    \captionof{figure}{Diagrammatic representation of the nonrealizable arroid from \cref{ex:non-pascal}, with the ``line'' $L$ in dashes.}
    \label{fig:non-pascal}
\end{figure}
\begin{example}[Non-Pascal arroid]\label{ex:non-pascal}
    Let $\arroid$ be the arroid encoding parts of the data of a (self-crossing) hexagon inscribed in a conic. Its data, most easily intuited from \cref{fig:non-pascal}, is given by the ground set $\groundset \coloneqq \set{C, 1, \dots, 6, L}$, with $d(C)=2$, $d(L)=1$ and $d(i)=1$ for all $i$, the multiplicity functions are all constant taking value $1$, with multiset of points $\points$ consists of the points named $\mathbf{a}=\set{1,3,L}$, $\mathbf{b}=\set{2,5,L}$, $\mathbf{c}=\set{4,6,L}$, as well as the points $\set{1,2,C}$, $\set{3,4,C}$, $\set{5,6,C}$, $\set{3,5,C}$, $\set{1,6,C}$, $\set{2,4,C}$, $\set{2,3}$, $\set{4,5}$,$\set{1,5}$, and $\set{2,6}$.

    Pascal's theorem, also known as the hexagrammum mysticum theorem (the reader may prefer \emph{hexagrammum nonmysticum} as the name of the above arroid), states that for curves satisfying the above data, the three points $\mathbf{a}$, $\mathbf{b}$ and $\mathbf{c}$ must be colinear. By omitting $L$ from $\mathbf{c}$ in the arroid, it can no longer be realized in the projective plane. This example also generalizes the non-Pappus matroid, in light of the generalization of Pappus theorem by Pascal's theorem, and one might construct further nonrealizable examples by working with the Cayley--Bacharach theorem.
\end{example}

An arroid of rank one should be viewed as formalizing the process of restricting an arrangement of curves to considering the points of the arrangement on one of the given curves, while recording exactly which curves pass through a given point. In fact we may formalize this in the notion of an \emph{arroid contraction}, contracting a rank $r=2$ arroid to a rank $r=1$ arroid, as defined below. 
\begin{definition}
    Let $\arroid = (\groundset, d, \points, \mulSet)$ be a rank $2$ arroid, and $i\in \groundset$ a divisor. The \emph{contraction} $\arroid/i$ of $\arroid$ is the arroid $\arroid/i \coloneqq (\groundset \setminus \set{i}, d/i , \points/i,  \mulSet^{\arroid/i})$, where:
    \begin{itemize}
        \item   $d/i (j) \coloneqq d_i \cdot d(j)$,
        \item $\points/i$ is the multiset $\set{\pbf\setminus \set{i} \mid i \in \pbf }_{\pbf \in \points}$,
        \item The multiplicity functions, now of rank $r=1$, are the functions $$\mul{\pbf\setminus \set{i}}^{\arroid/i} \colon \pbf \setminus \set{i} \to \Z$$ given by mapping an element $j\in \pbf \setminus \set{i}$ to $\mul{\pbf}(i,j)$, i.e. using the multiplicity function of the original points $\pbf$.
    \end{itemize}
    The Bézout condition for $\arroid/i$ follows from that for $\arroid$.
\end{definition}

Furthermore, in the context of arrangements of curves, one may always consider the arrangement obtained by removing one of the curves. In the arroid context, we generalize this notion in the form of an \emph{arroid deletion}. 
\begin{definition}
    The \emph{deletion} $\arroidDeletion{i}$ of and arroid $\arroid$ is the arroid $\arroidDeletion{i} \coloneqq (\groundset \setminus \set{i}, d, \points\setminus i, \mulSet^{\arroidDeletion{i}})$, where:
    \begin{itemize}
        \item $d$ takes the same values as for $d$ of $\arroid$,
        \item $\points \setminus i$ is the multiset consisting of the points $\pbf$ of $\points$ not containing $i$, as well as points $\pbf\setminus \set{i}$ where $\pbf \neq \set{i,j}$ for some $j\in \groundset$.
        \item Since each point $\mathbf{q}$ of $\arroidDeletion{i}$ corresponds to a specific point $\pbf$ of $\arroid$, we define the multiplicity function 
        $$\mul{\mathbf{q}}^{\arroidDeletion{i}} \colon \mathbf{q}^2 \to \Z$$
        as given by $\mul{\mathbf{q}}^{\arroidDeletion{i}}(j,k)=\mul{\pbf}^\arroid(j,k)$ for all $j,k\in \mathbf{q}$.
    \end{itemize}
\end{definition}




Finally, we describe a certain type of arroid, which is inspired by the case of an arrangement of curves where all the curves intersect pairwise transversely.
\begin{definition}\label{def:transversal_arroid}
    A \emph{transversal arroid} is an arroid such that for all $\pbf \in \points$, the intersection multiplicity $\mul{\pbf}$ is constant taking value $1$.
\end{definition}
Note that the arroid deletion of a transversal arroid is itself transversal.

\subsection{The fan of a transversal arroid}\label{section:arroid_fan}
Let $\arroid=(\groundset, d, \points, \mulSet)$ be a transversal arroid. We will now construct a weighted rational polyhedral fan $\Sigma_\arroid \subset \R^{|\groundset|}/\langle(d_1,\dots,d_n)\rangle$ associated to a transversal arroid. 
This is inspired by the construction of the Bergman fan as the cone over the order complex of matroids as done in \cite{ArdilaKlivans}.

\begin{theorem}\label{theorem:arroid_fan_tropical_variety}
    To each transversal arroid $\arroid$, one can associate an \emph{arroid fan} $\Sigma_\arroid$ which is a balanced tropical variety.
\end{theorem}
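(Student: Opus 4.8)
The plan is to confirm that the weighted rational polyhedral fan $\Sigma_\arroid$ constructed above satisfies the balancing condition in codimension one, which is exactly the requirement for it to be a tropical variety in the sense of \cite[Definition 5.7]{BIMS14}. Since the fan is built separately for arroids of rank one and rank two, I would organize the verification into these two cases and check balancing at every codimension-one cone in each. Along the way I would note that the weight function $\omega$ is integer-valued, being a count of multiplicities $w(\pbf)$, so no issue of well-definedness arises.

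For a rank one arroid, the fan is purely one-dimensional, so the only codimension-one cone is the origin $\mincone$, and balancing amounts to showing that the weighted sum of the primitive ray generators vanishes in the quotient $\R^{|\groundset|}/\langle(d_1,\dots,d_n)\rangle$. I would compute $\sum_{\pbf\in\points^u}\omega(\pbf)v_\pbf$ by expanding $v_\pbf=\sum_{j\in\pbf}[e_j]$ and interchanging the order of summation, grouping contributions by divisor $j\in\groundset$. Transversality forces each incidence to count with multiplicity one, and the Bézout property applied to the singleton $(j)$ collapses the inner sum to $\sum_{\pbf\ni j}1=d_j$, yielding $[\sum_j d_j e_j]=0$ in the quotient.

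For a rank two arroid the codimension-one cones are the rays, of which there are two types: the point rays $\rho_\pbf$ and the divisor rays $\rho_i$. For a point ray $\rho_\pbf$ balancing is direct: each adjacent two-dimensional cone $\sigma_{i,\pbf}$ contributes the transverse primitive vector $[e_i]$, and summing over the divisors $i\in\pbf$ with weight $w(\pbf)$ gives $w(\pbf)[\sum_{i\in\pbf}e_i]=w(\pbf)v_\pbf$, which lies along $\rho_\pbf$. For a divisor ray $\rho_i$ I would instead take the transverse generators to be the point vectors $[v_\pbf]$ and expand $\sum_{\pbf\ni i}w(\pbf)[v_\pbf]$ by separating, within each $v_\pbf=[\sum_{k\in\pbf}e_k]$, the contribution of $e_i$ from those of the remaining divisors $j\in\pbf$. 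The latter reorganize into a sum over pairs $\{i,j\}$, and the Bézout property applied to $(i,j)$ guarantees each $e_j$ with $j\neq i$ occurs exactly $d_id_j$ times, producing $d_i\sum_{j\neq i}d_j[e_j]$; since $[\sum_k d_k e_k]=0$ in the quotient this is $-d_i^2[e_i]$, while the collected $e_i$ terms are visibly a multiple of $[e_i]$. Hence the total sum lies along $\rho_i$.

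The step I expect to be the main obstacle is the balancing at the divisor rays $\rho_i$. Unlike the point rays, where a single transverse generator settles the matter immediately, here one must carefully reindex the sum over all points through $i$ as a sum over pairs $\{i,j\}$, and it is only the Bézout condition together with the specific choice of quotient lattice $\langle(d_1,\dots,d_n)\rangle$ that forces the off-diagonal contributions to collapse into the correct ray direction. Once these computations are assembled, the two cases together establish that $\Sigma_\arroid$ is balanced, and the remaining verification that the cones genuinely fit into a fan is routine from the construction.
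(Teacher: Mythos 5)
Your proposal is correct and follows essentially the same route as the paper: the paper's proof is exactly the inline verification in the construction of $\Sigma_\arroid$, splitting into the rank one case (balancing at the origin via Bézout applied to singletons) and the rank two case (balancing at point rays $\rho_\pbf$ via the transverse generators $[e_i]$, and at divisor rays $\rho_i$ via the generators $[v_\pbf]$, reindexing over pairs and invoking Bézout to see each $e_j$ appears $d_i d_j$ times). Your explicit identification $d_i\sum_{j\neq i} d_j [e_j] = -d_i^2 [e_i]$ in the quotient is a slightly more detailed rendering of the paper's statement that this term ``is equivalent to a multiple of $[e_i]$,'' but the argument is the same.
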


We will prove this by first considering and constructing such a fan for rank one arroids, then give a similar construction for rank two.

We begin by indroducing some notation. Let $[w]$ denote the class of the vector $w\in \R^{|\groundset|}$ in the quotient space $\R^{|\groundset|}/\langle(d_1,\dots,d_n)\rangle$ , and $e_1,\dots, e_n$ be the standard basis of $\R^{|\groundset|}$. Denote by $\points^u$ the underlying set of the multiset $\points$, i.e. discarding the number of occurrences of each point $\pbf \in \points$ and recording it only once in $\points^u$.

\subsubsection{Rank one arroid}\label{sec:rank-one-arroid-fan}
First consider $\arroid$ an arroid of rank one, and define $\Sigma_\arroid \subset \R^{|\groundset|}/\langle(d_1,\dots,d_n)\rangle$ to be the following weighted one-dimensional rational polyhedral fan. For each point $\pbf \in \points^u$, let $v_\pbf \coloneqq \sum_{j \in \pbf} [e_j]$ be a ray of $\Sigma_\arroid$. The weight function $\omega\colon \Sigma_1 \to \Z$ is defined by taking $v_\pbf$ to $w(\pbf)$, where $w(\pbf)$ is the number of times $\pbf$ is repeated in the multiset $\points$. This fan is \emph{tropical}, in the sense that it satisfies the balancing condition in codimension one \cite[Definition 5.7]{BIMS14}, as we have 
$$\sum_{\pbf\in \points^u} \omega(\pbf) v_\pbf = \sum_{\pbf\in \points} \sum_{j \in \pbf} [e_j] = [\sum_{j \in \groundset} \sum_{\substack{\pbf\in \points \\ j \in \pbf}} e_j] = [\sum_{j \in \groundset} d_j e_j] =0,$$
where we used the Bézout property and the transversality to conclude that $\sum_{j \in \pbf} 1= d_j$.

\subsubsection{Rank two arroids}\label{sec:rank-two-arroid-fan}
Now we turn to arroids $\arroid$ of rank two. We must show that:
\begin{enumerate}
    \item one can associate a fan given the data of a rank two arroid, and
    \item that the associated fan is tropical.
\end{enumerate}

We define the fan as follows. For each divisor $i \in \groundset$, the cone $\rho_i \coloneqq \cone([e_j])$ is a ray of $\Sigma_\arroid$. For each point $\pbf\in \points^u$, the cone $\rho_{\pbf} \coloneqq \cone(v_\pbf)$, with $v_\pbf \coloneqq \sum_{i \in \pbf} [e_j]$, is also a ray of $\Sigma_\arroid$. The two-dimensional cones of $\Sigma_\arroid$ are the following. For each pair $i\in \groundset$ and $\pbf \in \points$ with $i \in \pbf$, there is a two-dimensional cone $\sigma_{i,\pbf}\coloneqq\cone(\rho_i, \rho_\pbf)$ in $\Sigma_\arroid$. The weight function $\omega\colon \Sigma_2 \to \Z$ is defined by taking $\sigma_{i,\pbf}$ to $w(\pbf)$, where $w(\pbf)$ is the number of times $\pbf$ is repeated in the multiset $\points$. 

\begin{lemma}\label{lemma:rank-two-arroid-fan-is-tropical}
    The fan $\Sigma_\arroid$ is tropical.
\end{lemma}
\begin{proof}
    Now we must verify balancing at each ray. There are two types of rays to verify: the $\rho_\pbf$ and the $\rho_i$.
    Picking a ray $\rho_\pbf$, we must verify that the sum $\sum_{\sigma_{i,\pbf} \supface \rho_{\pbf}} \omega(\sigma_{i,\pbf}) v_{\sigma_{i,\pbf}/\rho_{\pbf}}$ is a vector contained in $\rho_{\pbf}$, where $v_{\sigma_{i,\pbf}/\rho_{\pbf}}$ together with the primitive vector $v_\pbf$ of $\rho_{\pbf}$ generates $\sigma_{i,\pbf}$. One such $v_{\sigma_{i,\pbf}/\rho_{\pbf}}$ is precisely $[e_j]$, the primitive vector of $\rho_i$, and so we have 
    $$\sum_{\sigma_{i,\pbf} \supface \rho_{\pbf}} \omega(\sigma_{i,\pbf}) v_{\sigma_{i,\pbf}/\rho_{\pbf}} = \sum_{i \in \pbf} w(\pbf) [e_j] = w(\pbf) [\sum_{i\in \pbf} e_j] = w(\pbf) [v_\rho],$$
    which is contained in $\rho_\pbf$. The balancing condition for the rays $\rho_i$ follows by using the Bézout condition in the following manner.
    We pick $v_{\sigma_{i,\pbf}/\rho_i} = [v_\pbf]$, then
    \begin{align*}
        \sum_{\sigma_{i,\pbf} \supface \rho_i} \omega(\sigma_{i,\pbf}) v_{\sigma_{i,\pbf}/\rho_i} = \sum_{\substack{\pbf \ni i \\ \pbf \in \points^u}} w(\pbf) [v_\pbf] = \sum_{\substack{ \pbf \ni i \\ \pbf \in \points}} [\sum_{j \in \pbf} e_j] = \sum_{\substack{j \in \groundset \\ j \neq i}} \sum_{\substack{\pbf \supset \set{i,j} \\ \pbf \in \points}}  [e_j] + \sum_{\substack{\pbf \ni i \\ \pbf \in \points}}[e_j].
    \end{align*}
    Now by the Bézout condition, each vector $e_j \neq e_i$ appears exactly $d_i d_j$ times, so the first term is equivalent to a multiple of $[e_i]$ in the quotient $\R^{|\groundset|}/\langle(d_1,\dots,d_n)\rangle$. This implies in particular that the whole sum is equal to some multiple of $[e_i]$, and hence contained in $\rho_i$. 
\end{proof}

\begin{proof}[Proof of \cref{theorem:arroid_fan_tropical_variety}]
    By the constructions made in \cref{sec:rank-one-arroid-fan} and \cref{sec:rank-two-arroid-fan}, we have associated a fan $\Sigma_\arroid$ for each transversal arroid $\arroid$, which we call the \emph{arroid fan} of $\arroid$. Moreover, by the discussion in \cref{sec:rank-one-arroid-fan} and \cref{lemma:rank-two-arroid-fan-is-tropical}, these fans are tropical.
\end{proof}

\begin{remark}\label{remark:indec_conn_codim_one}
    It follows from the above construction that the fan of a transversal arroid of rank one or two is necessarily \emph{connected through codimension one} in the sense of \cite[Definition 3.3.4]{MaclaganSturmfels}.
\end{remark}

\subsection{Operations on transversal arroid fans}
In this section, we describe operations that can be performed using the concept of arroid fans. We begin by describing the reduced star at the rays of the fan $\Sigma_\arroid$ an arroid $\arroid$ of rank two in terms of arroid fans. 
\begin{proposition}\label{prop:arroid_star}
    Let $\Sigma_\arroid$ be an arroid fan, and $i\in \groundset$ correspond to the ray $\rho_i$ of $\Sigma_\arroid$. Then the reduced star $\Star{\Sigma_\arroid}{\rho_i}$ is equal to the arroid fan $\Sigma_{\arroid / i}$ of the arroid contraction $\arroid / i$.
\end{proposition}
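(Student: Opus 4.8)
The plan is to compare the two fans directly, cone by cone, through the linear projection that defines the reduced star. First I would list the cones of $\Sigma_\arroid$ that contain the ray $\rho_i$: by the construction in \cref{section:arroid_fan} a two-dimensional cone $\sigma_{k,\pbf}=\cone(\rho_k,\rho_\pbf)$ contains $\rho_i$ only if $\rho_i\in\set{\rho_k,\rho_\pbf}$, which forces $k=i$ since $\rho_i$ is a divisor ray and is distinct from the point rays $\rho_\pbf$. Hence the cones containing $\rho_i$ are exactly $\rho_i$ itself together with the cones $\sigma_{i,\pbf}$, one for each $\pbf\in\points^u$ with $i\in\pbf$, so $\Star{\Sigma_\arroid}{\rho_i}$ is a one-dimensional fan in $N/\lattice{\rho_i}$. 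I would then realize the quotient space concretely: the coordinate projection $\R^{n}\to\R^{n-1}$ forgetting the $i$-th coordinate sends $D=(d_1,\dots,d_n)$ to $(d_j)_{j\neq i}$, hence descends to a surjection $q\colon \R^{n}/\langle D\rangle \to \R^{n-1}/\langle (d_id_j)_{j\neq i}\rangle$ (the two lines spanned by $(d_j)_{j\neq i}$ and $(d_id_j)_{j\neq i}$ coincide), whose kernel is exactly the span of $\rho_i$. This target is precisely the ambient space of $\Sigma_{\arroid/i}$, whose ground set is $\groundset\setminus\set{i}$ carrying the degrees $d_id_j$.

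Next I would match rays and weights under $q$. The cone $\sigma_{i,\pbf}$ projects to the ray spanned by $q(v_\pbf)=\sum_{j\in\pbf\setminus\set{i}}[e_j]$, which is exactly the ray $v_{\pbf\setminus\set{i}}$ contributed to $\Sigma_{\arroid/i}$ by the point $\pbf\setminus\set{i}$. The assignment $\pbf\mapsto\pbf\setminus\set{i}$ is a bijection from the points of $\points^u$ containing $i$ to the points of $(\points/i)^u$, with inverse $\mathbf{q}\mapsto\mathbf{q}\cup\set{i}$, and it preserves multiplicities: the number of occurrences of $\pbf\setminus\set{i}$ in the multiset $\points/i$ equals $w(\pbf)$, since $\pbf$ is the unique point of $\points$ containing $i$ that maps to it. As the reduced star assigns to the projected ray the weight $\omega(\sigma_{i,\pbf})=w(\pbf)$, and $\Sigma_{\arroid/i}$ assigns the weight $w(\pbf)$ to $v_{\pbf\setminus\set{i}}$, the two weighted fans agree ray by ray.

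The step I expect to be the main obstacle is the integral bookkeeping underlying the identification $q$, namely checking that $q$ carries the lattice $N/\lattice{\rho_i}$ isomorphically onto the lattice of $\Sigma_{\arroid/i}$. The subtlety is that $q$ is induced by the surjection $\Z^{n}\to\Z^{n-1}$ whose kernel is the nonsaturated sublattice $\Z[e_i]$, whereas $\lattice{\rho_i}$ is its saturation, so one must verify that passing to saturated lattices on both sides is compatible. Here the rescaling of the degrees to $d_id_j$ in the contraction is exactly what is needed: writing $g=\gcd_{j\neq i}d_j$, both $\Z^{n-1}/\langle (d_j)_{j\neq i}\rangle$ and $\Z^{n-1}/\langle (d_id_j)_{j\neq i}\rangle$ have the same saturated (torsion-free) quotient $\Z^{n-1}/\langle (d_j/g)_{j\neq i}\rangle$ inside $\R^{n-1}/\langle (d_j)_{j\neq i}\rangle$, because their defining vectors span the same line. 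Once this is confirmed, $q$ descends to an isomorphism of lattices matching the two fans together with their weights, which proves the proposition.
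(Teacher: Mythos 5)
Your proof is correct and follows essentially the same route as the paper: the paper's (very brief) proof likewise identifies the ambient space of $\Sigma_{\arroid/i}$ with the quotient $(\R^{|\groundset|}/\langle(d_1,\dots,d_n)\rangle)/\langle e_i\rangle$ and matches the cones of both fans with the points $\pbf \in \points$ containing $i$. Your additional verifications — the bijection $\pbf \mapsto \pbf\setminus\set{i}$ preserving multiplicities/weights, and the compatibility of the saturated lattices under the projection (where the rescaled degrees $d_i d_j$ span the same line as $(d_j)_{j\neq i}$) — are details the paper leaves implicit, and they check out.
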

\begin{proof}
    This follows by considering both definitions. The fan $\Sigma_{\arroid / i}$ is contained in the associated space $\R^{|\groundset \setminus \set{i}|}/\langle (d_1,\dots, \hat{d_i}, \dots, d_n) \rangle$, which can also be viewed as the space $(\R^{|\groundset|}/\langle (d_1,\dots, d_n) \rangle ) / \langle e_i \rangle$ that contains $\Star{\Sigma_\arroid}{\rho_i}$. Moreover, the cones of both these fans correspond exactly to the projection of the cones of $\Sigma$ containing $\rho_i$, i.e. the points $\pbf\in \points$ containing $i$.
\end{proof}
Similarly, it follows from the definitions that, given the arroid $\arroid$ of an arrangement of curves $\arrCur$, and considering the arrangement obtained by removing one of the curves, say numbered $i$, from the arrangement $\arrCur$, the corresponding arroid will be the deletion $\arroidDeletion{i}$.

Next, we show that the fan of a transversal arroid may be constructed inductively using tropical modifications along any of its arroid deletions. This is inspired by the construction of the Bergman fan of a matroid through tropical modifications described in \cite[Proposition 1.1.34]{ShawThesis}, as well as the \emph{shellability} property in \cite{AminiPiquerezFans}. 
\begin{proposition}\label{prop:arroid_tropical_modif}
    Let $\arroid=(\groundset, d, \points, \mulSet)$ be a transversal arroid, and $\arroidDeletion{i}$ one of its arroid deletions. Then $\Sigma_\arroid = \tm(\Sigma_{\arroidDeletion{i}}, \Sigma_{\arroid / i})$ is a tropical modification of the fan of the arroid deletion $\arroidDeletion{i}$ along the fan of the arroid contraction $\arroid / i$.
\end{proposition}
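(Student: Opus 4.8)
The plan is to realize the coordinate projection
$\pi\colon \R^{|\groundset|}/\langle(d_1,\dots,d_n)\rangle \to \R^{|\groundset\setminus\{i\}|}/\langle(d_j)_{j\neq i}\rangle$
that forgets the $i$-th coordinate as the structural projection underlying a tropical modification, with the kernel $\langle[e_i]\rangle$ playing the role of the new (vertical) direction. First I would record that this is the right ambient picture: the target is exactly the space containing $\Sigma_{\arroidDeletion{i}}$, the kernel is one–dimensional and spanned by the ray $\rho_i$, and a tropical modification of a fan in the target, as defined in \cref{section:trop_modif}, produces a fan in a space canonically identified with $\R^{|\groundset|}/\langle(d_1,\dots,d_n)\rangle$ once $[e_i]$ is declared the unit vertical direction. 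Both $\Sigma_\arroid$ and the modification are then two–dimensional fans in this space.

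Next I classify the cones of $\Sigma_\arroid$ by their image under $\pi$. Writing $\sigma_{j,\pbf}=\cone(\rho_j,\rho_\pbf)$, a cone with $j\neq i$ for which $\pbf\setminus\{i\}$ is still a point of $\arroidDeletion{i}$ projects isomorphically onto the corresponding two–dimensional cone of $\Sigma_{\arroidDeletion{i}}$; I claim these ``graph'' cones cover each cone of $\Sigma_{\arroidDeletion{i}}$ exactly once (this is the delicate point, see below) and therefore assemble into the graph $\{(x,\phi(x))\}$ of a unique integral piecewise–linear function $\phi$ on $\Sigma_{\arroidDeletion{i}}$. The remaining cones, namely $\sigma_{i,\pbf}$ with $i\in\pbf$ together with the two–element cones $\sigma_{j,\{i,j\}}$, all project onto rays of $\Sigma_{\arroidDeletion{i}}$; over each such ray $\tau$ they fill out, after subdividing along the interior ray $\rho_{\{i,j\}}=v_{\{i,j\}}$ where one is present, the half–plane $\tau_\geq$ spanned by the graph ray over $\tau$ and $[e_i]$. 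Matching this against the construction in \cref{section:trop_modif} shows that, up to this subdivision along the rays $\rho_{\{i,j\}}$, the support and weights of $\Sigma_\arroid$ coincide with $(\bigcup_\sigma \wtil\sigma)\cup(\bigcup_\tau \tau_\geq)=\tm(\Sigma_{\arroidDeletion{i}},\tropdiv(\phi))$.

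It then remains to identify $\tropdiv(\phi)$ with $\Sigma_{\arroid/i}$ as weighted fans. Rather than compute the slopes of $\phi$ directly, I would exploit that $\Sigma_\arroid$ is balanced by \cref{theorem:arroid_fan_tropical_variety}: projecting the balancing relation at a graph ray $\wtil\tau$ along $\pi$, the contributions of the graph cones reproduce the balancing of $\Sigma_{\arroidDeletion{i}}$ at $\tau$, so the entire $[e_i]$–component of the defect is carried by the wall cones over $\tau$; by the definition of $\omega_\phi$ this defect is exactly $\omega_\phi(\tau)$. Reading it off the wall cones, the weight over $\tau$ is the number of points $\pbf\ni i$ with $\pbf\setminus\{i\}$ pointing in the direction of $\tau$, counted with the multiplicities $w(\pbf)$ — which is precisely the weight assigned to the corresponding ray of the contraction $\Sigma_{\arroid/i}$. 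The Bézout property enters here in exactly the way it does in the balancing computation preceding \cref{theorem:arroid_fan_tropical_variety}. This yields $\tropdiv(\phi)=\Sigma_{\arroid/i}$ and hence the claimed equality $\Sigma_\arroid=\tm(\Sigma_{\arroidDeletion{i}},\Sigma_{\arroid/i})$.

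The step I expect to be the main obstacle is establishing that the graph cones genuinely form the graph of a single–valued function $\phi$, i.e.\ that over each cone of $\Sigma_{\arroidDeletion{i}}$ there is a unique sheet. This is where the combinatorics of the deletion must be controlled: the deletion can identify two distinct points of $\arroid$ — most notably when both a set $\pbf$ and a strictly larger point $\pbf\cup\{i\}$ occur in $\points$ — in which case two cones of $\Sigma_\arroid$ lying at different heights project onto the same cone of $\Sigma_{\arroidDeletion{i}}$, and the ``graph'' becomes multivalued. Handling these coincidences, the resulting subdivision of the walls, and the precise sense in which the modification is taken up to refinement, is the crux of the argument; once single–valuedness of the graph is secured, the weight bookkeeping via the Bézout property is routine and parallels the proof of \cref{theorem:arroid_fan_tropical_variety}.
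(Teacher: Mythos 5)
Your overall route is the same as the paper's: analyse the coordinate projection $\pi$ killing $[e_i]$, show that $\Sigma_\arroid$ decomposes into graph cones and wall cones over $\Sigma_{\arroidDeletion{i}}$, and identify the attached divisor with $\Sigma_{\arroid/i}$. The only genuine divergence is in the last step, where the paper recognizes the divisor as the reduced star $\Star{\Sigma_\arroid}{\rho_i}$ via \cref{prop:arroid_star}, while you recover its weights from the balancing of $\Sigma_\arroid$; both are fine.

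The issue you flag as the crux --- single-valuedness of the graph --- is, however, not a technical lemma awaiting proof: it can genuinely fail, and when it fails the statement itself fails. Take a transverse very affine arrangement containing two conics $j,k$ and a line $i$ such that $i$ passes through exactly one of the four intersection points of the two conics, and no other curve of the arrangement passes through any of these four points. The associated arroid is transversal and has both $\set{j,k}$ (with multiplicity three) and $\set{i,j,k}$ as points, so $\Sigma_\arroid$ contains the two cones $\sigma_{j,\set{j,k}}=\cone([e_j],[e_j+e_k])$ and $\sigma_{j,\set{i,j,k}}=\cone([e_j],[e_i+e_j+e_k])$. These meet only along $\cone([e_j])$, yet have the same image under $\pi$, so the support of $\Sigma_\arroid$ meets the fiber of $\pi$ over any interior point of the cone $\cone([e_j],[e_j+e_k])$ of $\Sigma_{\arroidDeletion{i}}$ in two isolated points. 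No tropical modification can have this support: by the definition in \cref{section:trop_modif}, $\tm(\Sigma_{\arroidDeletion{i}},\tropdiv(\phi))$ meets each fiber of $\pi$ in a single point or in a half-line. Hence $\Sigma_\arroid$ is not a tropical modification of $\Sigma_{\arroidDeletion{i}}$ along any tropical rational function, and the proposition requires the additional hypothesis that no point $\mathbf{q}$ of $\arroidDeletion{i}$ arises both from a point $\mathbf{q}\in\points$ and from a point $\mathbf{q}\cup\set{i}\in\points$. You should be aware that the paper's own proof has exactly the same gap: its case analysis treats ``$\pbf\cup\set{i}$ is a point of $\arroid$'' and ``$\pbf\in\points$ does not contain $i$'' as mutually exclusive alternatives, which is precisely the assumption above, never verified and not implied by the axioms (nor by simplicity of the arrangement). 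Under that no-nesting hypothesis your graph is single-valued, and the rest of your argument --- the wall analysis over the rays and the weight bookkeeping via Bézout and balancing --- completes correctly, on par with the paper's proof; so your proposal is as strong as the published argument, with the added merit of having located its weak point.
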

\begin{proof}
    We will show that the preimage of a point in the fan $\Sigma_{\arroid}$ under the coordinate projection $\pi \colon \R^{|\groundset|}/\langle(d_1,\dots, d_n)\rangle \to \R^{|\groundset\setminus i|}/\langle(d_1,\dots, \hat{d_i} ,\dots ,d_n) \rangle$ is either a half-line in the $e_i$ direction, or a point. Then one may obtain a piecewise integer affine function $g$ defining the tropical modification by taking $g(p) = \pi^{-1}(p)$ for the points $p\in \Sigma_{\arroidDeletion{i}}$ whose preimage $\pi^{-1}(p)$ are single points. 

    To show that $\pi^{-1}(p)$ is either a point or half-line in $\Sigma_{\arroid}$, we analyze the cones of the latter. The cones of $\Sigma_{\arroid}$ have one of three forms: 
    \begin{itemize}
        \item $\cone(e_j, e_j+e_i)$ for some $j\in \groundset$, corresponding to a point $\pbf$ which disappears in $\arroidDeletion{i}$,
        \item $\cone(e_j, e_\mathbf{a} + e_i)$, corresponding to a point $\pbf = \mathbf{a}\cup {i} \in \points$ from which $i$ is removed in $\arroidDeletion{i}$, or 
        \item $\cone(e_j, e_\pbf)$ for some $\pbf\in \points$ not containing $i$.
    \end{itemize}

    Any point $p$ of the fan $\Sigma_{\arroidDeletion{i}}$ is contained inside $\cone(e_k,e_\pbf)$ for some $k$ and $\pbf\in \points \setminus i$ with $k \in \pbf$. We distinguish multiple cases:
    \begin{itemize}
        \item If $p=\mathbf{0}$ is the origin, then $\pi^{-1}(p)=\cone(e_i)$ is a half-ray of the fan $\Sigma_{\arroid}$.
        \item If $p = \alpha e_k + \beta e_\pbf$ with $\alpha,\beta >0$, and $\pbf \cup \set{i}$ is a point of $\arroid$, the preimage $\pi^{-1}(p)$ is $p' = \alpha e_j + \beta (e_\pbf+ e_i)$ in $\Sigma_{\arroid}$.
        \item If $p= \alpha e_k$ then either $\set{i,k}\in \points$, and then $\pi^{-1}(p)= \set{a e_j +b (e_j+e_i) \mid a+b=\alpha; \; a,b \geq 0}$ is a half-ray or $\set{i,k} \not\in \points$ and then $\pi^{-1}(p)=\alpha e_k$.
        \item If $p = \beta e_\pbf$ then either $\pbf\cup \set{i}$ is a point of $\arroid$, and the preimage is a half.line, or it is not so that $\pi^{-1}(p)=\beta e_\pbf$.
    \end{itemize}
    For any of $p$, the fiber is either a half-line or a point, and thus $\Sigma_{\arroid}$ is a tropical modification of $\Sigma_{\arroid \setminus i}$.
    
    Moreover, it follows from the construction that the divisor along which the modification is performed is the reduced star $\Star{\Sigma_\arroid}{\rho_i}$, which is $\Sigma_{\arroid/ i}$ by \cref{prop:arroid_star}, so that we have $\Sigma_\arroid = \tm(\Sigma_{\arroidDeletion{i}}, \Sigma_{\arroid / i})$. Moreover, the weights of $\Sigma_\arroid$ are then given in terms of the weights of the rays of $\Sigma_{\arroid / i}$, which is compatible with the construction of both fans as above.
\end{proof}

\subsection{Tropicalization}\label{section:arroid_tropicalization}

We now show that the data gathered in the transversal arroid of an arrangement of curves is sufficient to compute its tropicalization. It is in fact sufficient to note that, all the data used in the description of the tropicalization given in \cref{section:curve_arr} is recorded in the arroid. Our aim is instead to show that, for an arrangement where all intersections are transverse, the fan described in \cref{section:arroid_fan} is a fan structure on the tropicalization of the complement of the arrangement.

First, to insure that the arrangement of curves $\arrCur$ in the plane $\plane{K}$ is in fact very affine, we suppose that it contains at least three lines intersecting generically, so that they can be mapped to the coordinate axes by a projective transformation. We will say that such an arrangement is \emph{very affine}. If all the curves intersect pairwise transversely, we will say that the arrangement $\arrCur$ is \emph{transverse}. In this case, the associated arroid is transversal because the multiplicity functions at each point of the arroid $\arroid_\arrCur$ is given by the intersection multiplicities of the curves of $\arrCur$ at the corresponding point. Since all the curves intersect pairwise transversely, the multiplicity function $m_\pbf$ is constant of value $1$, hence the arroid is transversal.

\begin{theorem}\label{theorem:transverse_very_affine_curve_fan}
    Let $\arrCur$ be a transverse very affine arrangement of curves in the plane $\plane{K}$. Then the tropicalization $\trop(\arrCompCur)$ of the complement is supported on the fan of the associated transversal arroid $\arroid_\arrCur$.
\end{theorem}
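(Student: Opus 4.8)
The plan is to show that the arroid fan $\Sigma_{\arroid_\arrCur}$ of a transverse very affine arrangement agrees, as a set, with the tropicalization $\trop(\arrCompCur)$ as computed by the geometric-tropicalization algorithm of \cref{section:curve_arr}. Both objects are built from the same combinatorial data, so the strategy is to exhibit an explicit identification of their rays and two-dimensional cones, and then verify the identification respects the fan structures. Since the arrangement is very affine, $X = \arrCompCur$ embeds in its intrinsic torus, and by \cref{cnc_trop} (available without a characteristic hypothesis by \cref{cnc_no_char}, as these are surfaces) the tropicalization is the union of cones $[\sigma]$ over the boundary complex of the combinatorial normal crossing compactification produced by blowing up all triple points.

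The key steps, in order, are as follows. First I would run the blow-up algorithm: because the arrangement is \emph{transverse}, every pairwise intersection is a single transverse point, and the only obstruction to combinatorial normal crossing is a point lying on three or more curves. After blowing up each such point, the boundary divisor consists of the strict transforms of the original curves $C_i$, together with one exceptional divisor $E_\pbf$ for each intersection point $\pbf$ of multiplicity at least three. Second, using the short exact sequence \eqref{sesPic}, I would compute the divisorial ray directions. The class group relations on the blown-up surface express each exceptional class $E_\pbf$ in terms of the strict transforms of the curves through $\pbf$, mirroring exactly the relation $v_\pbf = \sum_{i\in\pbf}[e_i]$ defining the rays $\rho_\pbf$ of the arroid fan; meanwhile the linear-equivalence relation $\sum_i d_i[\widetilde{C_i}] = 0$ in the cokernel is precisely the quotient by $\langle (d_1,\dots,d_n)\rangle$ used to define $\Sigma_\arroid \subseteq \R^{|\groundset|}/\langle(d_1,\dots,d_n)\rangle$. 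Third, I would match the two-dimensional cones: a two-cone $[\sigma]$ appears in $\trop(X)$ exactly when two boundary divisors meet, and after the blow-ups the intersecting pairs are exactly (a curve $C_i$ with an exceptional $E_\pbf$ through it), which corresponds bijectively to the cones $\sigma_{i,\pbf} = \cone(\rho_i,\rho_\pbf)$ with $i\in\pbf$; one must also check that the spurious cones coming from double points $\pbf=\{i,j\}$ collapse, i.e.\ the rays $[e_i],[e_j],v_\pbf=[e_i]+[e_j]$ become coplanar in the quotient, exactly as the second block of \cref{linesandconic} illustrates. Finally I would invoke \cref{theorem:arroid_fan_tropical_variety} to confirm the resulting fan is balanced, completing the identification of supports.

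The main obstacle will be verifying the ray-direction computation in full generality, i.e.\ that the cokernel of the map in \eqref{sesPic} on the blown-up surface is canonically identified with $\R^{|\groundset|}/\langle(d_1,\dots,d_n)\rangle$ in such a way that the exceptional rays $[E_\pbf]$ land on $v_\pbf$ and the rays in directions lying inside the span of higher-dimensional cones get absorbed. Concretely, one must track how the intersection-theoretic relations $E_\pbf \cdot \widetilde{C_i}$ and the adjunction/Bézout constraints on the surface produce the degree-weighted relation $\sum_j d_j e_j = 0$; the transversality hypothesis is what guarantees each exceptional divisor is met once by each curve through $\pbf$, so that no higher multiplicities intrude and the matroid-style sum-of-basis-vectors formula for $v_\pbf$ holds verbatim. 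The remaining steps---the cone-matching and the collapse of double-point cones---are combinatorial bookkeeping once the ray directions are pinned down.
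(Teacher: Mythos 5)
Your proposal is correct and follows essentially the same route as the paper's proof: blow up the points lying on at least three curves to obtain a simple normal crossing compactification, apply geometric tropicalization (\cref{theorem:snc_trop}, or \cref{cnc_trop} with \cref{cnc_no_char}), identify the rays of the strict transforms with the classes $[e_i]$ and the exceptional rays with $v_\pbf=\sum_{i\in\pbf}[e_i]$ (transversality giving multiplicity one), and match the two-dimensional cones, including the observation that double points only subdivide, rather than enlarge, the support. The single cosmetic difference is that you compute the ray directions via the Picard-group sequence \eqref{sesPic} --- the method the paper illustrates in \cref{section:curve_arr} --- whereas the paper's proof reads them off directly from the divisorial valuations (orders of vanishing along strict transforms and at blown-up points); the two computations are equivalent, and the final appeal to \cref{theorem:arroid_fan_tropical_variety} for balancing, while harmless, is not needed for the statement about supports.
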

\begin{proof}
    We show that the arroid fan is supported on the tropicalization of the complement of the arrangement. Blowing up $\plane{K}$ in any point where more than $3$ of the curves in $\arrCur$ meet, we obtain a simple normal crossing divisor $D$ which compactifies the complement $\arrCompCur\coloneqq \plane{K} \setminus \cup_{C\in \arrCur} \; C$. Therefore, using the map to the intrinsic torus, \cref{theorem:snc_trop} gives us that the tropicalization  is equal to the fan whose rays are $\cone([\val_{D}])$ in $N_\R$ for each irreducible component $D$ of the simple normal crossing divisor $\divisor$, and whose two-dimensional cones are $\cone([\val_{D}], [\val_{D'}])$ for boundary divisors $D$ and $D'$ such that $D\cap D' \neq \emptyset$. 
    
    There are two types of irreducible components of $\divisor$: the strict transforms of the curves $C$ in $B$, whose divisorial valuations are such that the associated vectors $[\val_C]$ is a standard basis vector of $N_\R$, and the exceptional divisors $E$ of the blown-up loci $\pbf$, for which the valuation $\val_E$ computes the order of vanishing at $\pbf$. 
    Therefore, the vector $[\val_E]$ is the sum $\sum_{i_k} [\val_{D_{i_k}}]$ for the $D_{i_k}$ which intersect at $\pbf$. 
    In particular, it follows from this description of the rays that the tropicalization is exactly equal to the fan of the arroid $\arroid_\arrCur$.
\end{proof}

\section{Tropical homology manifold arroid fans}\label{section:THM_arroid}
In this section, we study when a transversal arroid fan is a tropical homology manifold. We will first show that all tropical Borel--Moore homology of a transversal arroid fan is concentrated in top-degree, and hence torsion-free. Then we will use an Euler characteristic argument to show that being a tropical homology manifold reduces to a balancing condition. This property is a main component in understanding when the complement of an arrangement of curves is cohomologically tropical, see \cref{theorem:cohom_trop}.

\subsection{Tropical cohomology of transversal arroid fans}
For a transversal arroid, the main structural result is that the tropical Borel--Moore homology of the associated fan is concentrated in top-degree. More precisely, we have the following lemma.
\begin{lemma}\label{lemma:BM_vanishing}
    Let $\arroid$ be a transversal arroid. Then $H_{p,q}^{BM}(\Sigma_\arroid)=0$ for $q\neq 2$, $\forall p$.
\end{lemma}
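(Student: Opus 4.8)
The plan is to read the vanishing off the shape of the Borel--Moore complex and then prove the two pieces of exactness that remain. Since the fan of a rank two transversal arroid is two-dimensional, top degree means $q=2$, and $C^{BM}_{p,\bullet}(\Sigma_\arroid)$ is the three-term complex
$$0 \to \bigoplus_{\sigma \in (\Sigma_\arroid)_2} \F_p(\sigma) \xrightarrow{\partial_2} \bigoplus_{\rho \in (\Sigma_\arroid)_1} \F_p(\rho) \xrightarrow{\partial_1} \F_p(\mincone) \to 0,$$
concentrated in homological degrees $2,1,0$. Thus $H^{BM}_{p,q}(\Sigma_\arroid)$ is automatically zero outside $\{0,1,2\}$, and it remains to show that $\partial_1$ is surjective, giving $H^{BM}_{p,0}(\Sigma_\arroid)=0$, and that the complex is exact at the middle term, giving $H^{BM}_{p,1}(\Sigma_\arroid)=0$. (The analogous rank one fact --- that a rank one arroid fan, whose complex has only the two rightmost terms in degrees $1,0$, has Borel--Moore homology concentrated in its top degree $1$ --- will serve as the base case below, and likewise reduces to surjectivity of $\partial_1$.)

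The surjectivity of $\partial_1$ is the easy half. Since $\bigwedge^p \lattice{\mincone}=0$ for $p\geq 1$, every wedge power appearing in $\F_p(\mincone)=\sum_{\gamma}\bigwedge^p\lattice{\gamma}$ comes from a cone $\gamma\neq\mincone$, and each such $\gamma$ contains at least one ray $\rho$. Therefore $\F_p(\mincone)=\sum_{\rho}\iota_{\mincone\prec\rho}\big(\F_p(\rho)\big)$, which is exactly the image of $\partial_1$ up to the signs $\sign{\mincone}{\rho}$; hence $\partial_1$ is surjective. This argument is insensitive to the rank, so it also settles the rank one base case completely.

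The crux is the exactness at the middle term, and here the plan is to induct on $|\groundset|$ using the tropical modification structure of \cref{prop:arroid_tropical_modif}. Writing $\Sigma_\arroid=\tm(\Sigma_{\arroidDeletion{i}},\Sigma_{\arroid/i})$ for a choice of divisor $i$, the fan $\Sigma_{\arroidDeletion{i}}$ is the fan of the transversal rank two arroid $\arroidDeletion{i}$, which has one fewer divisor and hence has Borel--Moore homology concentrated in degree $2$ by the inductive hypothesis, while the divisor $\Sigma_{\arroid/i}$ is the fan of the transversal rank one arroid $\arroid/i$ and has Borel--Moore homology concentrated in its top degree $1$ by the base case. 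The strategy is to compare $C^{BM}_{p,\bullet}(\Sigma_\arroid)$ with these two complexes through the explicit cone structure of the modification: the lifted cones $\wtil{\sigma}$ form a subcomplex isomorphic to $C^{BM}_{p,\bullet}(\Sigma_{\arroidDeletion{i}})$, and the remaining cones $\tau_\geq$, together with the new vertical ray $\rho_i$, assemble after a degree shift into a copy of $C^{BM}_{p,\bullet}(\Sigma_{\arroid/i})$. The resulting short exact sequence of complexes yields a long exact sequence in which the top degree $1$ of the divisor feeds into the top degree $2$ of $\Sigma_\arroid$; since both outer terms are concentrated in their respective top degrees, no homology can survive in degrees $0$ or $1$. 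By \cref{prop:arroid_star} the divisor is precisely $\Star{\Sigma_\arroid}{\rho_i}=\Sigma_{\arroid/i}$, which makes the identification of coefficient systems canonical.

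I expect the main obstacle to be this last identification, namely matching the tropical coefficient systems $\F_p$ across the modification. The difficulty is that for a ray $\rho$ of $\Sigma_{\arroidDeletion{i}}$ that also lies on the divisor, the multi-tangent space $\F_p(\wtil\rho)$ computed in $\Sigma_\arroid$ strictly contains the pullback of $\F_p(\rho)$, because the star of $\wtil\rho$ acquires the extra cones $\tau_\geq$ pointing in the new vertical direction $\rho_i$. Keeping track of exactly these extra summands, and checking that the connecting homomorphisms carry the degree shift claimed above, is the technical heart of the argument; here transversality enters, through the fact (used in \cref{section:arroid_fan}) that the primitive generators satisfy $v_\pbf=\sum_{j\in\pbf}[e_j]$, so that the lifts $\wtil\sigma$ and the vertical cones $\tau_\geq$ meet along lattice-saturated faces and the inclusions $\iota_{\tau\prec\sigma}$ behave compatibly. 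As an alternative to the inductive route, one can attempt a fully direct computation of the three-term complex, reducing the case $p=2$ to the balancing/Minkowski-weight description, using that $\Sigma_\arroid$ is connected through codimension one by \cref{remark:indec_conn_codim_one}, and the case $p=1$ to the explicit span relations among the $[e_j]$ and the $v_\pbf$; this avoids the modification bookkeeping at the cost of a longer linear-algebra analysis.
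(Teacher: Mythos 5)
Your strategy---induction on the ground set via \cref{prop:arroid_tropical_modif} together with a long exact sequence, after disposing of $q=0$ by surjectivity of $\partial_1$---is the same in spirit as the paper's, and your $q=0$ argument and rank-one base case are correct (the paper needs the same observation implicitly when it asserts that the one-dimensional divisor has Borel--Moore homology concentrated in degree one). The gap is in the induction step: the short exact sequence of complexes you build it on does not exist in the form claimed. The lifted cones $\wtil{\sigma}$ do index a subcomplex of $C^{BM}_{p,\bullet}(\Sigma_\arroid)$ (they are closed under taking faces), but its coefficients are the spaces $\F_p(\wtil{\sigma})$ computed in $\Sigma_\arroid$, and at every cone lying over the divisor---including the minimal cone $\mincone$---these strictly contain the lifts of the corresponding coefficients of $\Sigma_{\arroidDeletion{i}}$; for instance $\F_p(\wtil{\rho})\supseteq \bigwedge^p\bigl(\lattice{\wtil{\rho}}\oplus\Z e_i\bigr)$ for $\rho$ a ray of the divisor. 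So this subcomplex is \emph{not} isomorphic to $C^{BM}_{p,\bullet}(\Sigma_{\arroidDeletion{i}})$, and its homology is not covered by your inductive hypothesis. Likewise the quotient, indexed by the cones $\tau_\geq$, carries $\bigwedge^p\bigl(\lattice{\wtil{\tau}}\oplus\Z e_i\bigr)\cong \bigwedge^p\lattice{\tau}\oplus\bigl(\bigwedge^{p-1}\lattice{\tau}\wedge e_i\bigr)$, hence is not a single shifted copy of $C^{BM}_{p,\bullet}(\Sigma_{\arroid/i})$ but a mixture of the $p$ and $p-1$ coefficient systems of the divisor. You flag the coefficient mismatch as ``the technical heart'', but it is not bookkeeping: if one repairs the sub by comparing it to $C^{BM}_{p,\bullet}(\Sigma_{\arroidDeletion{i}})$ via the coordinate projection, the kernel is a copy of $C^{BM}_{p-1,\bullet}(\Sigma_{\arroid/i})\wedge e_i$ sitting in degrees $1,0$, and the vanishing of $H_{p,1}$ of your subcomplex becomes equivalent to surjectivity of a connecting map $H^{BM}_{p,2}(\Sigma_{\arroidDeletion{i}})\to H^{BM}_{p-1,1}(\Sigma_{\arroid/i})$---a statement of the same nature and difficulty as \cref{lemma:ses_tropical_co_hom}, which your proposal nowhere establishes.

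The paper sidesteps exactly this by running the sequence through the \emph{closed} tropical modification: in $\ctm(\Sigma_{\arroidDeletion{i}},\tropdiv(\phi))$ the divisor appears as faces at infinity, where by the sedentarity conventions the coefficients are genuinely those of $\tropdiv(\phi)$, with no $\wedge\, e_i$ correction, so the chain-level sequence $0\to C^{BM}_{p,\bullet}(\tropdiv(\phi))\to C^{BM}_{p,\bullet}(\ctm(\Sigma_{\arroidDeletion{i}},\tropdiv(\phi)))\to C^{BM}_{p,\bullet}(\tm(\Sigma_{\arroidDeletion{i}},\tropdiv(\phi)))\to 0$ holds on the nose; the price is the separate identification $H^{BM}_{p,q}(\ctm(\Sigma_{\arroidDeletion{i}},\tropdiv(\phi)))\cong H^{BM}_{p,q}(\Sigma_{\arroidDeletion{i}})$ of \cref{lemma:closed_modif_original_fan}, which is where \cref{lemma:ses_tropical_co_hom} (Shaw's thesis) does the real work. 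Until you either prove the surjectivity statement above directly or pass to the closed modification as the paper does, your induction step does not close.
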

\begin{proof}
    By \cref{prop:arroid_tropical_modif}, each arroid fan $\Sigma_\arroid$ is the tropical modification of any of its deletions, by adding a divisor to the ground set. Hence it suffices to show that the property holds for the modification of an arroid fan. Let $\Sigma_{\arroid}$ be an arroid fan, and $\tm(\Sigma_\arroid, \tropdiv(\phi))$ a modification of $\Sigma_{\arroid}$. As pointed out in the proof of \cite[Proposition 5.5]{Lefschetz11}, there is a short exact sequence of complexes 
    \begin{align*}
        0 \to C_{p,\bullet}^{BM}(\tropdiv(\phi))&\to C_{p,\bullet}^{BM}(\ctm(\Sigma_\arroid, \tropdiv(\phi)))\\ &\to C_{p,\bullet}^{BM}(\tm(\Sigma_\arroid, \tropdiv(\phi))) \to 0,
    \end{align*}
    and moreover, by \cref{lemma:closed_modif_original_fan}, the long exact sequence in homology takes the form 
    \begin{align*}
        \cdots \to H_{p,q}^{BM}(\tropdiv(\phi))\to H_{p,q}^{BM}(\Sigma_\arroid) &\to H_{p,q}^{BM}(\tm(\Sigma_\arroid, \tropdiv(\phi))) \\ &\to H_{p,q-1}^{BM}(\tropdiv(\phi)) \to \cdots.
    \end{align*}
    By induction we have $H_{p,q}^{BM}(\Sigma_\arroid)=0$ for $q\neq 2$, and similarly $H_{p,q}^{BM}(\tropdiv(\phi))=0$ for $q\neq 1$, since $\tropdiv(\phi)$ is one-dimensional. Therefore the result follows by exactness.
\end{proof}
\begin{lemma}[{\cite[Lemma 2.2.7]{ShawThesis}}]\label{lemma:ses_tropical_co_hom}
    Let $\Sigma\subseteq \R^{n}$ be a tropical fan, with its tropical modification $\tm(\Sigma_\arroid, \tropdiv(\phi))$ along the tropical divisor $\tropdiv(\phi)$. Then for each $p=1,\dots, \dim \Sigma$, we have short exact sequences
    \[\begin{tikzcd}[column sep=5pt,row sep=tiny]
        0 & {H_{p-1,0} (\tropdiv(\phi))} & {H_{p,0} (\tm(\Sigma,\tropdiv(\phi)))} & {H_{p,0} (\Sigma)} & {0,} \\
        0 & {H^{p,0} (\Sigma)} & {H^{p,0} (\tm(\Sigma,\tropdiv(\phi)))} & {H^{p-1,0} (\tropdiv(\phi))} & {0,}
        \arrow[from=2-1, to=2-2]
        \arrow[from=1-1, to=1-2]
        \arrow["\gamma", from=1-2, to=1-3]
        \arrow["\delta", from=1-3, to=1-4]
        \arrow[from=1-4, to=1-5]
        \arrow["{\delta^\vee}", from=2-2, to=2-3]
        \arrow["{\gamma^\vee}", from=2-3, to=2-4]
        \arrow[from=2-4, to=2-5]
    \end{tikzcd}\]
    dual to each other, where $\gamma$ is the map $w\mapsto w \wedge e_{n+1}$ and $\delta$ is the map $v_1 \wedge \cdots \wedge v_p \mapsto \pi(v_1) \wedge \cdots \wedge \pi(v_p)$, where $\pi \colon \R^{n+1} \to \R^{n}$ is the projection onto the first $n$ components.
\end{lemma}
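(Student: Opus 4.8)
The plan is to establish the two short exact sequences by analyzing the tropical (co)homology of the closed tropical modification $\ctm(\Sigma, \tropdiv(\phi))$ and comparing it with the open modification $\tm(\Sigma, \tropdiv(\phi))$ and with $\Sigma$ itself. The key observation is that the multi-tangent spaces $\F_p$ of the modification are built out of those of $\Sigma$ together with the new coordinate direction $e_{n+1}$: for a cone $\wtil{\sigma}$ lifted from $\sigma \in \Sigma$, the tangent lattice sits inside $\R^n$, while the cones $\tau_\geq$ coming from the divisor $\tropdiv(\phi)$ pick up the extra direction $e_{n+1}$. This is exactly what makes the maps $\gamma \colon w \mapsto w \wedge e_{n+1}$ and $\delta \colon v \mapsto \pi(v)$ well defined on the level of the $\F_p$-spaces.

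First I would verify that $\gamma$ and $\delta$ are genuine maps on the relevant tropical homology groups at the minimal cone. Since we are working with fans, \cref{section:tropical_co_homology} tells us that the only nonvanishing tropical homology group is $H_{p,0}(\Sigma) = \F_p(\mincone)$, and likewise for the modification; so the claimed sequences are really statements about the spaces $\F_p(\mincone)$ of the three fans. The map $\delta = \pi_*$ is induced by the projection $\pi\colon \R^{n+1}\to \R^n$ forgetting the last coordinate, which sends $\F_p(\mincone)$ of $\tm(\Sigma,\tropdiv(\phi))$ onto $\F_p(\mincone)$ of $\Sigma$; the map $\gamma$ takes a class $w \in \F_{p-1}(\mincone)$ of the divisor $\tropdiv(\phi)$ and wedges with $e_{n+1}$ to produce a class in $\F_p(\mincone)$ of the modification. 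The plan is then to identify $\ker \delta$ with $\im \gamma$ and to check surjectivity of $\delta$ and injectivity of $\gamma$.

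The core computation is to show exactness at the middle term, i.e. that $\F_p(\mincone)$ of the modification decomposes, as a vector space, as the image of the divisor's $(p-1)$-classes wedged with $e_{n+1}$ together with a lift of the $p$-classes of $\Sigma$. Concretely, every element of the $p$-th multi-tangent space of $\tm(\Sigma,\tropdiv(\phi))$ at the origin is a sum of wedges $v_1 \wedge \dots \wedge v_p$ where each $v_k$ is tangent to some cone of the modification; using the explicit description of the cones $\wtil{\sigma}$ and $\tau_\geq$ from \cref{section:trop_modif}, such a wedge either lives entirely in $\R^n$ (projecting isomorphically under $\delta$) or contains the factor $e_{n+1}$ (landing in $\im\gamma$). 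The exactness of the second sequence then follows by dualizing, since $H^{p,0} = \Hom_\Q(\F_p(\mincone),\Q)$ and $\gamma^\vee, \delta^\vee$ are the transposes of $\delta,\gamma$, so duality of sequences is formal once the first sequence is exact.

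The main obstacle I anticipate is the careful bookkeeping of which wedge products of tangent vectors actually occur in $\F_p(\mincone)$ of the modification, and in particular verifying that the projection $\delta$ is surjective and that no $p$-class of $\Sigma$ is lost or double-counted when passing to the modification. One must be attentive to the fact that the divisor $\tropdiv(\phi)$ is codimension one in $\Sigma$, so its multi-tangent spaces only contribute through the $(p-1)$-level, which is precisely the degree shift appearing in the sequences. Since this lemma is cited verbatim from \cite[Lemma 2.2.7]{ShawThesis}, I would either reproduce the short argument above in the fan setting or, more economically, simply invoke that reference and note that the restriction to the $q=0$ row is immediate from the fan structure of $\Sigma$ and $\tm(\Sigma,\tropdiv(\phi))$.
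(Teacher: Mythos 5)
Your direct argument has a genuine gap at exactness in the middle term, and the dichotomy it rests on is false. A wedge of vectors tangent to a lifted cone $\wtil{\sigma}$ does \emph{not} ``live entirely in $\R^n$'': $\wtil{\sigma}$ is the graph of $\phi|_\sigma$, so its tangent vectors have the form $v + A_\sigma(v)\,e_{n+1}$ and are tilted out of $\R^n$ whenever the linear part $A_\sigma$ is nonzero. For instance, for $\Sigma=\R^2$ and $\phi=\min(0,x)$, the generator $(e_1+e_3)\wedge e_2 = e_1\wedge e_2 - e_2\wedge e_3$ of $\bigwedge^2 \lattice{\wtil{\sigma}}\otimes\Q$ (for $\sigma=\{x\le 0\}$) neither lies in $\bigwedge^2\R^2$ nor has $e_3$ as a factor, so it falls into neither case of your dichotomy. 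Conversely, ``contains the factor $e_{n+1}$'' does not imply ``lands in $\im\gamma$'': that requires the complementary factor to be tangent to the divisor $\tropdiv(\phi)$, not merely to $\Sigma$ (in the same example, $e_1\wedge e_3$ contains the factor $e_3$ but lies in neither $\im\gamma$ nor $\F_2(\mincone)$ of the modification). Concretely, writing each contribution from $\wtil{\sigma}$ as $\xi_\sigma \pm (\iota_{A_\sigma}\xi_\sigma)\wedge e_{n+1}$, where $\xi_\sigma\in\bigwedge^p\lattice{\sigma}\otimes\Q$ and $\iota_{A_\sigma}$ is contraction with the covector $A_\sigma$, an element of $\ker\delta$ takes the form $\bigl(\pm\sum_\sigma \iota_{A_\sigma}\xi_\sigma + \sum_\tau z_\tau\bigr)\wedge e_{n+1}$ with $z_\tau$ tangent to the divisor; the entire content of the lemma is that $\sum_\sigma \iota_{A_\sigma}\xi_\sigma$ then lies in $\F_{p-1}(\mincone)$ \emph{of} $\tropdiv(\phi)$ whenever $\sum_\sigma\xi_\sigma$ projects into the divisor's tangent spaces. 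Proving this forces you to use that $\phi$ is continuous and piecewise integer affine (so $A_\sigma - A_{\sigma'}$ vanishes on common faces, and contraction with such differences lands in wedge powers of those faces) and that the bend locus is exactly $\tropdiv(\phi)$. Pure bookkeeping of which wedges occur, without this input, cannot establish $\ker\delta\subseteq\im\gamma$.

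The pieces of your sketch that are sound --- the reduction to $\F_p(\mincone)$ for fans, well-definedness of $\gamma$ and $\delta$, injectivity of $\gamma$, surjectivity of $\delta$, and the formal dualization giving the second sequence --- avoid precisely the step carrying the content. Your fallback of simply invoking \cite{ShawThesis} is in fact what the paper does, so that route is legitimate; but the caveat you propose to check is misplaced. The restriction to the $q=0$ row is automatic for fans and is not the issue; rather, Shaw's Lemma 2.2.7 is stated for tropical modifications of \emph{matroid} fans, whereas here $\Sigma$ is an arbitrary tropical fan, so what must be verified (and what the paper asserts) is that Shaw's proof nowhere uses matroidality and carries over to this more general setting.
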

\begin{proof}
    The  \cite[Lemma 2.2.7]{ShawThesis} is stated for tropical modifications of matroid fans, however the proof is applicable to this more general context. We summarize it briefly. Duality between diagrams follows from the modules involved being $\Q$-vector spaces, so it suffices to prove exactness in the first diagram. To prove surjectivity of $\delta$, it suffices for instance to find preimages of the standard basis vectors. The injectivity of $\gamma$ is also straightforward. To complete the proof, it therefore suffices to observe that $\ker(\delta) = \im(\gamma)$, since $w\in \ker(\delta)$ if and only if $w = w' \wedge e_{n+1}$, but $w'\wedge e_{n+1} \in H_{p,0} (\tm(\Sigma,\tropdiv(\phi)))$ if and only if $w' \in H_{p-1,0} (\tropdiv(\phi))$, so that $w=\gamma(w')$.
\end{proof}
\begin{lemma}\label{lemma:closed_modif_original_fan}
    Let $\Sigma\subseteq \R^{n}$ be a tropical fan, and $\ctm(\Sigma_\arroid, \tropdiv(\phi))$ its closed tropical modification along the tropical divisor $\tropdiv(\phi)$. Then the isomorphism $H_{p,q}^{BM}(\Sigma_\arroid) \cong H_{p,q}^{BM}(\ctm(\Sigma_\arroid, \tropdiv(\phi)))$ holds for all $p$ and $q$. 
\end{lemma}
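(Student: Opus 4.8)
The plan is to study the coordinate projection $\pi\colon \ctm(\Sigma,\tropdiv(\phi)) \to \Sigma$ that forgets the last coordinate, and to show that the induced proper pushforward on Borel--Moore homology is an isomorphism in every bidegree. The map $\pi$ is proper precisely because the faces at infinity have been added: over a point $x\in\Sigma$ the fibre is the single point $(x,\phi(x))$ when $x\notin\tropdiv(\phi)$, and the closed interval $\set{x}\times[\phi(x),\infty]$ when $x\in\tropdiv(\phi)$. In both cases the fibre is compact and contractible, so $\pi$ is a proper map all of whose fibres have the Borel--Moore homology of a point, which is what makes an isomorphism plausible. The entire difficulty is to upgrade this topological statement to one with the multi-tangent coefficient systems $\F_p$, since $\F_p$ over $\ctm(\Sigma,\tropdiv(\phi))$ acquires the extra direction $e_{n+1}$ over the modified locus.

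First I would organize the cells of $\ctm(\Sigma,\tropdiv(\phi))$ into the three families from \cref{section:trop_modif}: the graph cells $\wtil{\sigma}$ for $\sigma\in\Sigma$, the vertical tubes $\tau_\geq$ for $\tau\in\tropdiv(\phi)$, and the faces at infinity $\tau_\infty$ for $\tau\in\tropdiv(\phi)$. The faces at infinity form a closed polyhedral subcomplex whose deletion returns the open modification, giving the short exact sequence
\[
0 \to C_{p,\bullet}^{BM}(\tropdiv(\phi)) \to C_{p,\bullet}^{BM}(\ctm(\Sigma,\tropdiv(\phi))) \to C_{p,\bullet}^{BM}(\tm(\Sigma,\tropdiv(\phi))) \to 0
\]
already used in the proof of \cref{lemma:BM_vanishing}. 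A key preliminary point is to pin down the coefficient system induced on the faces at infinity: at $\tau_\infty$ the direction $e_{n+1}$ is sedentary, so the differential $\iota_{\tau_\infty \subface \tau_\geq}\colon\F_p(\tau_\geq)\to\F_p(\tau_\infty)$ is the projection killing $e_{n+1}$, and $\F_p(\tau_\infty)$ agrees with the multi-tangent space $\F_p(\tau)$ computed in $\tropdiv(\phi)\subseteq\R^n$. To reach $\Sigma$ rather than $\tm(\Sigma,\tropdiv(\phi))$ directly, I would then filter $\ctm(\Sigma,\tropdiv(\phi))$ by the dimension of the image under $\pi$ (a Leray-type filtration for the proper cellular map $\pi$), or equivalently induct on dimension using that the reduced star of $\ctm(\Sigma,\tropdiv(\phi))$ at $\wtil{\tau}$ is itself a closed modification of the star $\Star{\Sigma}{\tau}$; either device reduces the claim to a local model over each cone.

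The heart of the matter is this local computation. Over a cone $\sigma$ not contained in $\tropdiv(\phi)$ the fibre is a point, $\wtil{\sigma}$ maps isomorphically to $\sigma$, and $d\pi$ identifies $\F_p(\wtil{\sigma})$ with $\F_p(\sigma)$, so there is nothing to check. Over a cone $\tau$ of $\tropdiv(\phi)$ the three cells $\wtil{\tau}$, $\tau_\geq$, $\tau_\infty$ interact through $\iota_{\wtil{\tau}\subface\tau_\geq}$ and $\iota_{\tau_\infty\subface\tau_\geq}$; here both $\F_p(\wtil{\tau})$ and $\F_p(\tau_\geq)$ contain wedges involving $e_{n+1}$ coming from the tube $\lattice{\tau_\geq}=\lattice{\tau}\oplus\Z e_{n+1}$, whereas $\F_p(\tau_\infty)$ does not, and I would check that these surplus $e_{n+1}$-directions cancel exactly, leaving homology equal to $\F_p(\tau)$ of $\Sigma$. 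I expect the main obstacle to be precisely this coefficient bookkeeping at infinity: the sheaf $\F_p$ was defined in \cref{section:tropical_co_homology} only for genuine fans, so one must first fix the sedentarity convention for $\F_p(\tau_\infty)$ and then verify the cancellation, which is the algebraic mechanism distinguishing the closed modification (where it holds) from the open one. Once the local cancellation is established, the global isomorphism $H_{p,q}^{BM}(\Sigma)\cong H_{p,q}^{BM}(\ctm(\Sigma,\tropdiv(\phi)))$ follows formally from the collapse of the projection filtration, or from the long exact sequence above together with an identification of its connecting maps.
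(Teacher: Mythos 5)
Your bookkeeping in steps (2)--(3) is correct and consistent with how the paper itself handles $\ctm$: the faces at infinity form a closed subcomplex with $\F_p(\tau_\infty)\cong\F_p^{\tropdiv(\phi)}(\tau)$ (the sedentary direction $e_{n+1}$ is killed), giving the short exact sequence of Borel--Moore complexes used in the proof of \cref{lemma:BM_vanishing}. But note that the paper does not give a direct argument at all: its proof consists of citing \cite[Proposition 5.6]{Lefschetz11} after observing that the one matroid-specific ingredient of that proof, \cite[Lemma 5.7]{Lefschetz11}, is replaced here by \cref{lemma:ses_tropical_co_hom}. Your proposal instead invokes a proper-pushforward/Leray-type filtration for $\F_p$-coefficient Borel--Moore homology, which is not off-the-shelf machinery and would itself have to be constructed, and it reduces everything to two local claims: the cancellation of the $e_{n+1}$-directions over divisor cones (which you defer; this is precisely the content of \cref{lemma:ses_tropical_co_hom}, i.e.\ of \cite[Lemma 2.2.7]{ShawThesis}), and an identification over non-divisor cones which you declare trivial.

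That second claim --- ``over a cone $\sigma$ not contained in $\tropdiv(\phi)$, $d\pi$ identifies $\F_p(\wtil{\sigma})$ with $\F_p(\sigma)$, so there is nothing to check'' --- is false, and this is a genuine gap rather than a technicality. The projection $\F_p(\wtil{\sigma})\to\F_p(\sigma)$ is always surjective but can fail to be injective: relations among the graph lattices $\lattice{\wtil{\sigma'}}$, $\sigma'\supfaceeq\sigma$, can produce the vertical vector $e_{n+1}$ even when $\sigma$ meets no cone of $\tropdiv(\phi)$. Concretely, let $\Sigma\subset\R^2$ be the balanced fan with rays $\cone(e_1),\cone(-e_1),\cone(e_2),\cone(-e_2)$ of weight one, and let $\phi$ be integer linear on each ray with outgoing slopes $+1,+1,-1,-1$. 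Then $\omega_\phi(\mincone)=1+1-1-1=0$, so $\tropdiv(\phi)=\emptyset$ and $\ctm(\Sigma,\tropdiv(\phi))$ is simply the graph $\Gamma$, the balanced fan on the rays $(1,0,1),(-1,0,1),(0,1,-1),(0,-1,-1)$; every cone of $\Sigma$ satisfies your hypothesis, yet $(1,0,1)+(-1,0,1)=(0,0,2)$ shows $e_3\in\F_1(\wtil{\mincone})$, so $\F_1(\wtil{\mincone})\cong\Q^3$ while $\F_1(\mincone)\cong\Q^2$. This propagates to homology: $H^{BM}_{1,1}(\Sigma)\cong\Q^2$ (two independent relations among $e_1,e_2,-e_1,-e_2$), whereas $H^{BM}_{1,1}(\Gamma)\cong\Q$ (the four graph rays span $\Q^3$, leaving only the balancing relation). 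Nor is emptiness of the divisor the culprit: crossing this example with $\R$ and adding $\max(x_3,0)$ to $\phi$ yields a modification with nonempty divisor for which $d\pi$ again has a kernel at the ray $\cone(e_3)$, which is not contained in the divisor. The failure is exactly the phenomenon that the hypotheses of the cited sources exclude (Bergman fans in \cite{ShawThesis,Lefschetz11}, div-faithfulness in the sense of \cite{AminiPiquerezFans}), and it shows that the statement read for arbitrary tropical fans and arbitrary tropical rational functions is itself false. Consequently no amount of filtration formalism can close your argument: the step you label ``nothing to check'' is the one place where a restrictive hypothesis on $(\Sigma,\phi)$ must enter, and any complete proof has to make that hypothesis explicit and use it there.
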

\begin{proof}
    The \cite[Lemma 5.7]{Lefschetz11} holds for arbitrary tropical modifications by the above \cref{lemma:ses_tropical_co_hom}, and thus the proof given for \cite[Proposition 5.6]{Lefschetz11} generalizes to the present context.
\end{proof}

Note that the above strategy is applicable also in the case of $\Z$-coefficients for tropical homology, which shows that there is no torsion in the tropical Borel--Moore homology of the fan of a transversal arroid. Moreover, it follows that the fan of a transversal arroid is a tropical homology manifold if and only if it is uniquely balanced along each ray.
\begin{theorem}\label{theorem:arroid_THM_is_local}
    Let $\Sigma_\arroid$ be the arroid fan of a transversal arroid. Then $\Sigma_\arroid$ is a  tropical homology manifold if and only if it is uniquely balanced along each of its rays.
\end{theorem}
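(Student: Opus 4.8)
The plan is to unwind the definition of tropical homology manifold directly: $\Sigma_\arroid$ is a tropical homology manifold exactly when every reduced star $\Star{\Sigma_\arroid}{\gamma}$ satisfies tropical Poincaré duality, including the star $\Star{\Sigma_\arroid}{\mincone}=\Sigma_\arroid$ at the minimal cone. Since $\Sigma_\arroid$ is two-dimensional I would organize the verification by the dimension of $\gamma$. If $\gamma=\sigma_{i,\pbf}$ is a two-dimensional cone, the reduced star is a single point and duality is automatic. This leaves the rays and the minimal cone, and the two implications of the theorem come from these two cases respectively.

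First I would treat the rays. By \cref{prop:arroid_star} the reduced star at $\rho_i$ is again an arroid fan, but for the duality computation I only need that $\Star{\Sigma_\arroid}{\rho}$ is a one-dimensional balanced fan, say with primitive ray generators $u_1,\dots,u_k$ and positive weights. Its tropical cochain and Borel--Moore chain complexes are short enough to write out by hand, so that the two possibly nonzero cap products $H^{0,0}\to H_{1,1}^{BM}$ and $H^{1,0}\to H_{0,1}^{BM}$ can be analysed explicitly; a direct computation shows that both are isomorphisms precisely when $k-\dim\langle u_1,\dots,u_k\rangle=1$, i.e.\ when the space of balancing relations among the $u_j$ is one-dimensional. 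This is exactly the statement that the star is \emph{uniquely balanced}, so Poincaré duality at $\rho$ is equivalent to $\Sigma_\arroid$ being uniquely balanced along $\rho$. The forward implication is then immediate: a tropical homology manifold satisfies duality at every reduced star, in particular at each ray, hence is uniquely balanced along every ray.

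For the converse I must promote duality along the rays to duality at $\mincone$, that is, for the whole fan $\Sigma_\arroid$. By \cref{lemma:BM_vanishing} we have $H_{p,q}^{BM}(\Sigma_\arroid)=0$ for $q\neq 2$, while the fan cohomology is concentrated in $q=0$; consequently the only cap products to examine are $H^{p,0}(\Sigma_\arroid)\to H_{2-p,2}^{BM}(\Sigma_\arroid)$ for $p=0,1,2$. For $p=0$ I would argue that unique balancing along every ray forces the top Borel--Moore group $H_{2,2}^{BM}(\Sigma_\arroid)$, which is the space of Minkowski weights, to be one-dimensional: around each ray the local balancing determines the weights of all incident facets up to a single common scalar, and \cref{remark:indec_conn_codim_one}, connectedness through codimension one, propagates this to a single global scalar, so that a Minkowski weight is determined by its value on one facet; since $\Sigma_\arroid$ is itself balanced the dimension is exactly one. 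As $H^{0,0}=\Q$, the cap product is an isomorphism in degree $p=0$.

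The remaining degrees $p=1,2$ are the crux and the main obstacle. Because source and target are each concentrated in a single (co)homological degree, it suffices to match dimensions and then upgrade to isomorphisms. This is where the Euler characteristic argument enters: I would compute the Euler characteristics of the tropical cochain and Borel--Moore chain complexes of $\Sigma_\arroid$, which agree by the combinatorial duality of the underlying cellular complexes, and combine this single numerical identity with the isomorphism already established in degree $0$ and with the local duality along the rays to pin down the dimensions in the remaining degrees, the middle degree $p=1$ being the one that the Euler characteristic genuinely resolves. An alternative route is to use the tropical modification description of \cref{prop:arroid_tropical_modif} together with the short exact sequences of \cref{lemma:ses_tropical_co_hom} and their Borel--Moore analogues, inducting on the number of divisors to compute both sides simultaneously. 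In either approach the delicate point is to guarantee that the cap product is injective, equivalently surjective, in the middle degree rather than merely that the dimensions happen to agree; I expect this to follow from the naturality of the cap product under the modification sequences, so that the local duality along the rays feeds directly into the global isomorphism at $\mincone$.
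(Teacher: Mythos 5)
Your treatment of the rays is correct: for a one-dimensional balanced fan both cap products can be computed by hand and are isomorphisms exactly when the space of balancing relations among the ray generators is one-dimensional, so Poincar\'e duality at $\Star{\Sigma_\arroid}{\rho}$ is equivalent to unique balancing along $\rho$, and the forward implication follows. Your $p=0$ step in the converse is also sound: unique balancing at every ray together with connectedness through codimension one (\cref{remark:indec_conn_codim_one}) forces $\dim H_{2,2}^{BM}(\Sigma_\arroid)=1$, so $H^{0,0}(\Sigma_\arroid)\to H_{2,2}^{BM}(\Sigma_\arroid)$ is an isomorphism.

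The converse in degrees $p=1,2$ is, as you say, the crux, and there your proposal has a genuine gap. An Euler-characteristic computation of $C_{p,\bullet}^{BM}(\Sigma_\arroid)$ does give $\dim H_{p,2}^{BM}(\Sigma_\arroid)$ by \cref{lemma:BM_vanishing}, and with unique balancing at the rays fed in (for $p=1$ one needs $\sum_\rho \dim\F_1(\rho)=2\,\#\Sigma_2$, which is exactly what unique balancing at each ray provides) the dimensions of source and target of the cap product do match; but a dimension count is not an isomorphism, and your proposed repair is stated only as an expectation. Worse, the natural way to realize that expectation --- inducting on $\#\groundset$ via \cref{prop:arroid_tropical_modif}, \cref{lemma:ses_tropical_co_hom} and the commuting diagram \cite[Diagram 5.6]{Lefschetz11}, then applying the five lemma --- is blocked by a circularity: the inductive hypothesis requires tropical Poincar\'e duality, i.e.\ unique balancing along the rays, for the deletion fan $\Sigma_{\arroidDeletion{i}}$, and deleting a divisor can destroy unique balancing at the remaining rays (their stars change: rays merge or disappear). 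The only route in the paper to the fact that deletion preserves this property is \cref{lemma:arroid_contraction_THM} combined with \cref{theorem:arroid_THM_is_local} itself, which you cannot invoke while proving it. The paper sidesteps all of this: its proof consists of \cref{lemma:BM_vanishing}, the observation that the reduced stars at rays are one-dimensional fans (hence automatically have Borel--Moore homology concentrated in top degree), and an appeal to the general criterion \cite[Theorem 5.10]{TPDspace}, which under exactly these concentration hypotheses converts the balancing conditions into the tropical homology manifold property. What you leave as an expectation is precisely the content of that cited theorem; without proving it, or finding a non-circular induction, your argument does not close.
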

\begin{proof}
    By \cref{lemma:BM_vanishing}, the tropical Borel--Moore homology groups are such that $H_{p,q}^{BM}(\Sigma_\arroid)=0$ for $q\neq 2$, for all $p$. Moreover, for each cone $\gamma$ of $\Sigma$, the reduced star fan $\Star{\Sigma}{\gamma}$ has $H_{p,q}^{BM}(\Star{\Sigma}{\gamma})=0$ for $q\neq 1$, for all $p$, since $\Star{\Sigma}{\gamma}$ is one-dimensional. Therefore, the result follows by \cite[Theorem 5.10]{TPDspace}. 
\end{proof}
Finally, we conclude with an argument showing that for arroids, the deletion operation preserves the tropical homology manifold property of the fan.
\begin{lemma}\label{lemma:arroid_contraction_THM}
    Let $\arroid$ be a transversal arroid such that the arroid fan $\Sigma_\arroid$ is a tropical homology manifold. Then for any arroid deletion $\arroid \setminus i$, the corresponding arroid fan $\Sigma_{\arroid \setminus i}$ is a tropical homology manifold.
\end{lemma}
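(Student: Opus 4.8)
The plan is to transport the local unique-balancing condition across the tropical modification of \cref{prop:arroid_tropical_modif} and to appeal to the local criterion of \cref{theorem:arroid_THM_is_local}. Since the deletion of a transversal arroid is again transversal, $\Sigma_{\arroidDeletion{i}}$ is the fan of a transversal arroid, so \cref{theorem:arroid_THM_is_local} reduces the claim to showing that $\Sigma_{\arroidDeletion{i}}$ is uniquely balanced along each of its rays. On the other hand, the same theorem applied to the hypothesis that $\Sigma_\arroid$ is a tropical homology manifold tells us that $\Sigma_\arroid$ is uniquely balanced along every one of its rays. Thus the task is to propagate unique balancedness from the rays of $\Sigma_\arroid$ to those of $\Sigma_{\arroidDeletion{i}}$.

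By \cref{prop:arroid_tropical_modif} we have $\Sigma_\arroid = \tm(\Sigma_{\arroidDeletion{i}}, \Sigma_{\arroid / i})$. The structural input I would use is that tropical modification, being defined cone by cone (see \cref{section:trop_modif}), commutes with passing to reduced stars: every ray $\rho$ of the base $\Sigma_{\arroidDeletion{i}}$ lifts to a ray $\wtil{\rho}$ of $\Sigma_\arroid$, and the reduced star $\Star{\Sigma_\arroid}{\wtil{\rho}}$ is precisely the tropical modification of $\Star{\Sigma_{\arroidDeletion{i}}}{\rho}$ along the induced divisor. For the divisor rays $\rho_j$ (with $j\neq i$) this is transparent from \cref{prop:arroid_star}, which identifies the two reduced stars with the rank-one fans $\Sigma_{\arroid / j}$ and $\Sigma_{(\arroidDeletion{i}) / j}$; here a routine check from the definitions gives the commutation $(\arroidDeletion{i}) / j = (\arroid / j)\setminus i$, so that $\Sigma_{\arroid / j}$ is exactly the fan obtained from $\Sigma_{(\arroidDeletion{i}) / j}$ by adjoining the divisor $i$. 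For the point rays $\rho_\pbf$ the same conclusion is read off from the trichotomy of cone types recorded in the proof of \cref{prop:arroid_tropical_modif}.

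The computational heart of the argument is that a tropical modification preserves unique balancedness of a \emph{one-dimensional} fan, which is exactly the situation here since all the reduced stars in sight are one-dimensional. Modifying a one-dimensional fan $S$ replaces each ray direction $u_l$ by its graph $(u_l, \phi(u_l))$ and, when the induced divisor is nonzero, adjoins a single ray in the new coordinate direction. A short computation then shows that the linear relations among the modified ray directions are in bijection with the linear relations among the $u_l$, so the space of balancing weights has the same dimension before and after the modification. Hence $\Star{\Sigma_\arroid}{\wtil{\rho}}$ is uniquely balanced if and only if $\Star{\Sigma_{\arroidDeletion{i}}}{\rho}$ is. Chaining the equivalences, unique balancedness of $\Sigma_\arroid$ along $\wtil{\rho}$ yields unique balancedness of $\Sigma_{\arroidDeletion{i}}$ along $\rho$; ranging over all rays $\rho$ and invoking \cref{theorem:arroid_THM_is_local} once more concludes that $\Sigma_{\arroidDeletion{i}}$ is a tropical homology manifold.

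I expect the main obstacle to be the locality statement, namely that the reduced star of the modification at a lifted base ray is again a tropical modification of the reduced star of the base, together with the accompanying bookkeeping of the extra coordinate direction $e_i$ introduced by the modification. The safest way to secure it is to verify it directly from the explicit cone descriptions of \cref{prop:arroid_star} and the proof of \cref{prop:arroid_tropical_modif}, rather than from a general black box, and to confirm that the added $e_i$-coordinate does not alter the relevant space of balancing weights — which is precisely what the one-dimensional relation count above guarantees.
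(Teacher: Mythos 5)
Your route is genuinely different from the paper's, and it contains a real gap. The paper never argues ray by ray: it applies \cref{prop:arroid_tropical_modif} to the full two-dimensional fans, feeds the resulting short exact sequences into the cap-product diagram \cite[Diagram 5.6]{Lefschetz11}, deduces tropical Poincar\'e duality for $\Sigma_{\arroidDeletion{i}}$ \emph{globally} by a five-lemma argument (the middle and right vertical maps are isomorphisms by hypothesis), and then handles all reduced stars at once via \cref{lemma:BM_vanishing} and \cite[Proposition 5.7]{TPDspace}. Your plan instead transfers unique balancing star by star, and it rests on two claims: that $\Star{\Sigma_\arroid}{\rho_k}$ is a tropical modification of $\Star{\Sigma_{\arroidDeletion{i}}}{\rho_k}$ for each divisor ray $\rho_k$ with $k\neq i$, and that a tropical modification of a one-dimensional fan induces a bijection between the spaces of balancing weights. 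Both claims fail in configurations that actually occur, and the failure is precisely at the step you flagged as the main obstacle.

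First, if $\points$ contains two points $\pbf$ and $\pbf\cup\set{i}$, both containing $k$, then $\Sigma_{\arroid/k}=\Star{\Sigma_\arroid}{\rho_k}$ has two distinct rays, in directions $v_{\pbf\setminus\set{k}}$ and $v_{\pbf\setminus\set{k}}+[e_i]$, projecting onto a single ray of $\Sigma_{(\arroidDeletion{i})/k}$; the fibers of the projection of stars are then disconnected, so this map is not a tropical modification at all. Second, and more seriously: whenever $\set{i,k}$ is not itself a point of $\arroid$ --- i.e.\ whenever every intersection point of curves $i$ and $k$ also lies on further curves of the arrangement --- the map of stars adjoins \emph{no} ray in the $e_i$-direction, so it is a degenerate modification; this is the typical case, and it holds for every line--conic and conic--conic pair in the paper's own family \cref{ex:inf_family}. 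In the degenerate case the relation spaces are not in bijection. Writing $B'$ and $B$ for the balancing-weight spaces of $\Sigma_{\arroid/k}$ and of $\Sigma_{(\arroidDeletion{i})/k}$, and splitting off the $e_i$-coordinate, a weight vector $w\in B$ whose weighted sum of primitive ray generators equals $\lambda$ times the degree vector lifts to $B'$ if and only if it satisfies the additional linear condition
\begin{equation*}
    \sum_{\mathbf{q}\,:\,\mathbf{q}\cup\set{i,k}\in\points} w_{\mathbf{q}} \;=\; \lambda\, d_i d_k,
\end{equation*}
so that $B'\cong B\cap H$ for a hyperplane condition $H$, rather than $B'\cong B$. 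Consequently $\dim B'=1$ is perfectly compatible with $\dim B=2$: unique balancing of the star upstairs does \emph{not} imply unique balancing downstairs, which is exactly the implication your argument needs --- the containment goes the wrong way. Ruling out the offending configurations appears to require global information about $\Sigma_\arroid$ (unique balancing at the other rays, or the homological data of the whole fan), which is what the paper's five-lemma argument supplies and what a single-star computation cannot.
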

\begin{proof}
    Since $\Sigma_\arroid$ is a tropical homology manifold, the reduced star along the ray corresponding to $i$, which by \cref{prop:arroid_star} is the arroid fan $\Sigma_{\arroid/i}$ of the contraction $\arroid / i$, satisfies Tropical Poincaré Duality. By \cref{prop:arroid_tropical_modif}, we have that $\Sigma_\arroid = \tm(\Sigma_{\arroid \setminus i}, \Sigma_{\arroid / i})$, so that for each $p=0,1,2$, \cite[Diagram 5.6]{Lefschetz11} takes the form
    \[\begin{tikzcd}
        0 & {H^{p,0}(\Sigma_{\arroidDeletion{i}})} & {H^{p,0}(\Sigma_\arroid)} & {H^{p,0}(\Sigma_{\arroid / i})} & 0 \\
        0 & {H_{2-p,2}^{BM}(\Sigma_{\arroidDeletion{i}})} & {H_{2-p,2}^{BM}(\Sigma_{\arroid})} & {H_{2-p,2}^{BM}(\Sigma_{\arroid/i})} & {0.}
        \arrow[from=1-1, to=1-2]
        \arrow[from=1-2, to=1-3]
        \arrow[from=1-3, to=1-4]
        \arrow[from=1-4, to=1-5]
        \arrow["{\frown [\Sigma_{\arroid \setminus i}]}", from=1-2, to=2-2]
        \arrow[from=2-2, to=2-3]
        \arrow[from=2-3, to=2-4]
        \arrow[from=2-4, to=2-5]
        \arrow["{\frown [\Sigma_{\arroid / i}]}", from=1-4, to=2-4]
        \arrow["{\frown [\Sigma_\arroid]}", from=1-3, to=2-3]
        \arrow[from=2-1, to=2-2]
    \end{tikzcd}\]
    By assumption both the middle and rightmost vertical arrows are isomorphisms, and therefore so is the leftmost vertical arrow. Therefore $\Sigma_{\arroid \setminus i}$ satisfies tropical Poincaré duality, and so it remains to show that each reduced star $\Star{\Sigma_{\arroid \setminus i}}{\gamma}$ of a cone $\gamma \in \Sigma_{\arroid \setminus i}$ satisfies tropical Poincaré duality.
    In light of \cref{lemma:BM_vanishing}, the result follows by applying \cite[Proposition 5.7]{TPDspace}.
\end{proof}

\subsection{Unique balancing of arroid rays}\label{section:unique_balancing_rays_arrangement}
We now begin a preliminary investigation into when the rays of an arroid fan are uniquely balanced. In light of \cref{theorem:arroid_THM_is_local}, establishing such conditions would be equivalent to describing which arroid fans are tropical homology manifolds.

\begin{lemma}
    Let $\Sigma_\arroid$ be an arroid fan and $\pbf$ a point. Then the ray $\rho_\pbf$ is uniquely balanced.
\end{lemma}
\begin{proof}
    This follows since the equality 
    $$\sum_{i \in \pbf} \alpha_i [e_i] = \alpha [\nu_\rho]$$
    holds if and only if $\alpha= \alpha_i$ for all $i$, where $\nu_\rho=\sum_{j \in \pbf} e_j$. Therefore the only balancing along such an edge is given by the weight $w(\pbf)$ for each facet containing $\rho_\pbf$, as in \cref{section:arroid_fan}.
\end{proof}
\begin{corollary}
    The only rays of an arroid fan for which unique balancing may fail, are the rays $\rho_i$ corresponding to divisors $i\in \groundset$. 
\end{corollary}
For such a divisor ray, a balancing corresponds to a set of weights $\alpha_\pbf$ for each $\pbf$ containing $i$, such that the equality
\begin{equation}\label{eq:balancing_ray_divisor}
    \sum_{\pbf \ni i } \alpha_\pbf [\nu_\rho] = \alpha [e_i]
\end{equation}
holds, where the sum is taken over the set of unique points $\points^u$. Using the balancing already exhibited in \cref{section:arroid_fan}, we may increment both sides such that $\alpha_\pbf \geq 0$ for all $\pbf$.

We will now describe conditions on an arrangement of lines and conics which guarantee the unique balancing property along the rays corresponding to the lines. We begin with a certain connectedness notion for arrangements.
\begin{definition}
    Let $\arrCur$ be a transverse very affine arrangement of curves in $\plane{K}$, and $C$ a curve of the arrangement. A \emph{cluster} of curves $\cluster\subset \arrCur$ on $C$ is a set of curves that for any two curves $C_a, C_b \in \cluster$, there is a sequence $C_a = C_{1},C_{2},\dots, C_{m} = C_b$ such that $C_{i}\cap C_{i+1}$ contains a point of $C$.
\end{definition}

\begin{example}
    \begin{figure}
        \centering
        \begin{tikzpicture}[scale=0.25]
            \clip(-9,-6) rectangle (20,6);
            \draw  (-2,0.8333333333333334) circle (2.1666666666666665cm);
            \draw  (2,0.8333333333333334) circle (2.1666666666666665cm);
            \draw  (0,-0.7333333333333333) circle (4.066666666666666cm);
            \draw  (14,0.8333333333333334) circle (2.1666666666666665cm);
            
            \draw  (-8,0) -- (18,0);
            \draw  (6,-10) -- (6,12);
            \draw  (12,-10) -- (12,12);
            \draw  (16,-10) -- (16,12);
            \begin{scriptsize}
            \draw[color=black] (-7, 1) node {$f$};
            \draw[color=black] (-2, -2) node {$c$};
            \draw[color=black] (2, -2) node {$d$};
            \draw[color=black] (0, 4) node {$e$};
            \draw[color=black] (7,4) node {$g$};
            \draw[color=black] (11.5, -2) node {$h$};
            \draw[color=black] (14, 4) node {$k$};
            \draw[color=black] (16.5, -2) node {$i$};
            \end{scriptsize}
            \end{tikzpicture}
        \captionof{figure}{A line with three clusters}
        \label{fig:cluster}
    \end{figure}

    \cref{fig:cluster} displays an arrangement where, for the line $f$, the sets of clusters are $\set{e,c,d}, \set{k,h,i}$ and the singleton $\set{g}$. The set $\set{g,d}$ is not a cluster for $f$.
\end{example}
A cluster may be a subset of another cluster, giving an ordering. For a chosen curve $C$, we may divide the points of the arrangement on $C$ into the maximal clusters in which they are intersection points. For a line $L$ in an arrangement of lines and conics, the following condition on the clusters of $L$ implies unique balancing of the corresponding ray of the arroid fan.
\begin{lemma}\label{lemma:arroid_balance_line_ray}
    Let $\arrCur$ be a transverse very affine arrangement of lines and conics in $\plane{K}$, and let $\arroid$ be the arroid of the arrangement. Let $i\in \groundset$ be a divisor corresponding to a line $L$. Suppose that each cluster of $L$ contains a line. Then the arroid fan $\Sigma_\arroid$ is uniquely balanced along the ray $\rho_i$.
\end{lemma}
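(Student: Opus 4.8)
The plan is to show that the linear system expressing balancing along $\rho_i$, namely \eqref{eq:balancing_ray_divisor}, has a one-dimensional solution space, necessarily spanned by the standard weights $\alpha_\pbf = w(\pbf)$ of \cref{section:arroid_fan}. Writing \eqref{eq:balancing_ray_divisor} out in the coordinates of $\R^{|\groundset|}$ by lifting the quotient relation with some coefficient $\lambda$, the condition becomes $\sum_{\pbf \supseteq \set{i,k}} \alpha_\pbf = \lambda d_k$ for every $k \neq i$, where the sum ranges over $\pbf \in \points^u$; the $i$-th coordinate merely fixes the right-hand scalar and imposes no constraint. Since $d_i = 1$, the standard solution corresponds to $\lambda = 1$. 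Given any solution $(\alpha_\pbf, \lambda)$, subtracting $\lambda$ times the standard one produces a solution with $\lambda = 0$, so it suffices to prove that the homogeneous system $\sum_{\pbf \supseteq \set{i,k}} \alpha_\pbf = 0$ (for all $k \neq i$) forces $\alpha_\pbf = 0$ for every point $\pbf$ on $L$.

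Here the restriction to lines and conics is essential and enters as follows. A line $k \neq L$ meets $L$ in a single point, so exactly one $\pbf \in \points^u$ contains $\set{i,k}$ and its equation reads $\alpha_\pbf = 0$; hence every point through which some line passes vanishes immediately. A conic $k$ meets $L$ in two points, contributing either a single $\pbf$ (forcing $\alpha_\pbf = 0$ outright when both intersections carry the same set of curves) or two points $\pbf_1 \neq \pbf_2$ with $\alpha_{\pbf_1} + \alpha_{\pbf_2} = 0$. I would record the latter relations as the edges of a graph $\Gamma$ on the points of $\arroid$ lying on $L$: along any edge a vanishing at one endpoint propagates to the other. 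Since the equation attached to a curve of a cluster involves only points of that cluster, it suffices to treat each maximal cluster $\cluster$ of $L$ separately and to show that inside $\cluster$ every connected component of $\Gamma$ contains a point forced to vanish directly.

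The crux, and the step I expect to be the main obstacle, is proving that each connected component $A$ of $\Gamma$ inside $\cluster$ contains such a directly vanishing point. I would argue by contradiction: if $A$ contained no point carrying a line and no conic meeting $L$ in a single point of $A$, then every curve through a point of $A$ would be a conic meeting $L$ in two points, both of which lie in $A$ by connectivity of $\Gamma$; call $\mathcal{D}_A$ the set of these conics. By hypothesis $\cluster$ contains a line $\ell$, so any conic of $\mathcal{D}_A$ is joined to $\ell$ by a chain of curves whose consecutive members share a point of $L$. Tracing such a chain to the first curve $C$ lying outside $\mathcal{D}_A$, the shared point $\pbf$ with the preceding conic would lie in $A$, yet $C$ would be either a line through $\pbf$ or a two-point conic not in $\mathcal{D}_A$ — contradicting, respectively, the absence of line-points in $A$ or the defining property of $\mathcal{D}_A$. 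Hence $A$ must contain a directly vanishing point; this is precisely where the cluster connectedness and the line-in-every-cluster assumption are used.

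With the claim established, the conclusion is routine: the directly vanishing point seeds a zero that spreads across all of $A$ through the conic relations $\alpha_{\pbf_1} + \alpha_{\pbf_2} = 0$, so all $\alpha_\pbf$ vanish on $A$, and ranging over all components and all clusters shows the homogeneous system has only the trivial solution. Together with the reduction of the first paragraph, this forces every solution of \eqref{eq:balancing_ray_divisor} to be a multiple of $(w(\pbf))_\pbf$, which is exactly unique balancing of $\Sigma_\arroid$ along $\rho_i$. The only point requiring care in the full write-up is the bookkeeping of the multiplicities $w(\pbf)$ when several intersection points on $L$ carry the same set of curves; this only strengthens the direct-vanishing conclusions and leaves the propagation argument unchanged.
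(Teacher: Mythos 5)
Your proof is correct and takes essentially the same route as the paper's: in both, the line in each cluster anchors the coefficient, the Bézout relations for conics propagate it across the cluster, and balancing then equalizes the clusters. Your homogeneous reformulation and the chain-tracing argument showing that every component of $\Gamma$ inside a cluster contains a directly vanishing point make explicit the connectivity step (and the degenerate case of a conic meeting $L$ twice in points carrying the same set of curves) that the paper's ``iterating through all pairs in the cluster'' leaves implicit.
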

\begin{proof}
    We will show that for any cluster, the coefficients $\alpha_\pbf$ are all equal, for any point $\pbf$ on $L$ of the cluster. For each cluster, let $\pbf_\ell$ be the point on $L$ containing the line, and $\alpha_{\pbf_\ell}$ the corresponding coefficient in equation \eqref{eq:balancing_ray_divisor}. Now for any of the conics $C$ passing through this point, the Bézout condition, along with balancing implies that the other intersection point $\pbf'$ of $C$ with $L$ must have coefficient $\alpha_{\pbf'}=\alpha_{\pbf_\ell}$. By iterating through all pairs in the cluster, all coefficients of the points of the cluster are equal. Moreover, as this is true for all clusters of $L$, the balancing condition forces the coefficients of different clusters to be equal, making the balancing unique.
\end{proof}

The unique balancing of rays associated with conics is seemingly more subtle. We first introduce specific types of clusters, from which we derive coarse criterion for unique balancing to occur.

\begin{definition}
    Let $\arrCur$ be a transverse very affine arrangement of lines and conics in $\plane{K}$, and $C$ a conic of the arrangement. A conic $C'$ of a cluster $\cluster$ is a \emph{source} if all four points of $C'\cap C$ are joined by at least five lines of the arrangement, and it called a \emph{reservoir} if it is joined by at least four. For $C''$ another conic of the cluster, an \emph{aqueduct} on $C$ between $C'$ and $C''$ is a line $L$ of the arrangement joining a pair of intersection points in $C'\cap C$ and $C'' \cap C$. A cluster containing at least one source conic, with all other conics being reservoir conics, with a chain of aqueducts linking back to the source, is called a \emph{balancing supply system}.
\end{definition}
The sense in the above naming is that, for a source, all weights $\alpha_\pbf$ of the associated intersection points must be equal. If an aqueduct connects a source to a reservoir, then the weights of the intersection points at the reservoir must be equal to that of the source, and this equality is then spread further to all other reservoirs linked by aqueducts; i.e. if a cluster is a balancing supply system, all the weights of any balancing are equal.
\begin{figure}
    \centering
    \begin{tikzpicture}[scale=0.5]
        \clip(-9.5,-5.5) rectangle (9.5, 5.5);
        \draw [rotate around={90:(-5,0)}, line width=0.6pt] (-5,0) ellipse (3.6055512754639873cm and 2cm);
        \draw [rotate around={0:(0,0)}, line width=0.6pt] (0,0) ellipse (7.280109889280522cm and 2cm);
        \draw [domain=-10:11, line width=0.6pt] plot(\x,{(--10.59839157514747--1.214485704937359*\x)/3.7113613433324577});
        \draw [domain=-10:11, line width=0.6pt] plot(\x,{(-10.59839157514747-1.214485704937359*\x)/3.7113613433324577});
        \draw [domain=-10:11, line width=0.6pt] plot(\x,{(-14.34027328167274-2.3611800873767006*\x)/3.7113613433324577});
        \draw [domain=-10:11, line width=0.6pt] plot(\x,{(--14.34027328167274--2.3611800873767006*\x)/3.7113613433324577});
        \draw [rotate around={90:(5,0)}, line width=0.6pt] (5,0) ellipse (3.6055512754639873cm and 2cm);
        \draw [domain=-10:11, line width=0.6pt] plot(\x,{(--10.598391575147453-1.2144857049373559*\x)/-3.7113613433324524});
        \draw [domain=-10:11, line width=0.6pt] plot(\x,{(--10.598391575147451-1.2144857049373554*\x)/3.7113613433324524});
        
        \draw [line width=0.6pt] (-3.263190056416981,-7) -- (-3.263190056416981,7);
        \draw [line width=0.6pt] (-7,-7) -- (-7,7);
        \draw [line width=0.6pt]  (7,-7) -- (7,7);
        \draw [line width=0.6pt]  (3.263190056416983,-7) -- (3.263190056416983,7);
        \draw[domain=-10:11,line width=0.6pt] plot(\x,{(-0--3.5756657923140596*\x)/-6.526380112833964});
        \begin{scriptsize}
        \draw [fill=black] (-6.974551399749439,0.5733471912196708) circle (1pt);
        \draw [fill=black] (-3.263190056416981,1.7878328961570298) circle (1pt);
        \draw [fill=black] (-6.974551399749439,-0.5733471912196708) circle (1pt);
        \draw [fill=black] (-3.263190056416981,-1.7878328961570298) circle (1pt);
        \draw [fill=black] (6.974551399749435,-0.5733471912196741) circle (1pt);
        \draw [fill=black] (3.263190056416983,-1.78783289615703) circle (1pt);
        \draw [fill=black] (3.263190056416983,1.7878328961570296) circle (1pt);
        \draw [fill=black] (6.974551399749435,0.5733471912196741) circle (1pt);

        \node[label={$C''$}] at (2.5,0) {};
        \node[label={$C'$}] at (-2.5,0) {};
        \node[label={$C$}] at (0,-2) {};
        \node[label={$L$}] at (0,0) {};
        \end{scriptsize}
        \end{tikzpicture}
    \captionof{figure}{A conic with balancing supply cluster}
    \label{fig:cluster_balancing}
\end{figure}
\begin{example}
    In \cref{fig:cluster_balancing}, we consider the conic $C$, which has a unique maximal cluster, comprised of all the lines together with the two additional conics $C'$ and $C''$. Here $C'$ is a source, $C''$ is a reservoir, and the line $L$ is an aqueduct. Therefore the unique maximal cluster is a balancing supply system.
\end{example}
\begin{lemma}\label{lemma:arroid_balance_conic_ray}
    Let $\arrCur$ be a transverse very affine arrangement of lines and conics in $\plane{K}$, and let $\arroid$ be the arroid of the arrangement. Let $i\in \groundset$ be a divisor corresponding to a conic $C$. If all maximal cluster of $C$ contain conics and are balancing supply system, then the arroid fan $\Sigma_\arroid$ is uniquely balanced along the ray $\rho_i$.
\end{lemma}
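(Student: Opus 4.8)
The plan is to turn the balancing condition \eqref{eq:balancing_ray_divisor} into an explicit homogeneous linear system and to show that the hypotheses force it to have only the trivial solution, so that the standard balancing of \cref{section:arroid_fan} is the only one up to scaling. First I would read \eqref{eq:balancing_ray_divisor} off coordinate by coordinate. Lifting to $\R^{|\groundset|}$, a balancing is a family $\set{\alpha_\pbf}_{\pbf \ni i}$ with $\sum_{\pbf \ni i}\alpha_\pbf\, v_\pbf = \alpha\, e_i + \lambda\,(d_1,\dots,d_n)$ for some $\alpha,\lambda \in \R$, where $v_\pbf = \sum_{j \in \pbf} e_j$. Comparing the $k$-th coordinate for each divisor $k \neq i$ gives
$$\sum_{\pbf \supseteq \set{i,k}} \alpha_\pbf = \lambda\, d_k,$$
the sum ranging over $\pbf \in \points^u$; the standard balancing is recovered with $\lambda = d_i = 2$ and $\alpha_\pbf = w(\pbf)$, using the Bézout property. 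Since every balancing determines a unique $\lambda$ as the common value $d_k^{-1}\sum_{\pbf \supseteq \set{i,k}}\alpha_\pbf$, and the standard balancing has $\lambda \neq 0$, unique balancing is equivalent to injectivity of the map $\set{\alpha_\pbf} \mapsto \lambda$, i.e. to the statement that the \emph{homogeneous} system obtained by setting $\lambda = 0$,
$$\sum_{\pbf \supseteq \set{i,k}} \alpha_\pbf = 0 \qquad \text{for all } k \neq i,$$
admits only the trivial solution $\alpha_\pbf = 0$.

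I would then record the geometric meaning of these equations and reduce to a single cluster. For a line $k$, the equation involves the two points of $C \cap k$ and reads $\alpha_\pbf + \alpha_{\pbf'} = 0$, degenerating to $\alpha_\pbf = 0$ if the two points determine the same point of $\points^u$; for a conic $k$, it involves the four points of $C \cap k$ and reads $\sum \alpha_\pbf = 0$. Because every point of $C$ lies only on curves of a single maximal cluster and each equation involves only points of one cluster, the system decomposes as a direct sum over the maximal clusters of $C$, so it suffices to prove vanishing on each cluster separately. Here the hypothesis that every maximal cluster contains a conic is essential: a cluster of lines alone would impose only the sign-reversing relations $\alpha_\pbf = -\alpha_{\pbf'}$, which admit nonzero bipartite solutions.

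Fix a maximal cluster, which by hypothesis is a balancing supply system, and read the line equations $\alpha_\pbf = -\alpha_{\pbf'}$ as a sign-reversing edge relation on the points of $C$ in the cluster. For the source conic $C'$, the at least five lines through the four points of $C' \cap C$ give a connected graph on these points that is not bipartite, and a sign-reversing relation on such a graph forces $\alpha_\pbf = 0$ on all of $C' \cap C$ with no external input. An aqueduct is a line through a point of $C' \cap C$ and a point of a reservoir $C'' \cap C$, and it transports the value $0$ from the source to one point of $C''$. For the reservoir $C''$, its at least four lines make the four points of $C'' \cap C$ connected, and a single anchored zero then propagates through the sign-reversing relations to force $\alpha_\pbf = 0$ on all of $C'' \cap C$. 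Iterating along the chain of aqueducts linking every reservoir back to the source yields $\alpha_\pbf = 0$ at every point of $C$ lying on a conic of the cluster. Finally, any remaining point of $C$ lying only on lines is joined, through the very connectivity defining the cluster, by a chain of lines to such a conic point; tracing this chain and using the sign-reversing relations propagates the vanishing to it as well, giving $\alpha_\pbf = 0$ throughout the cluster and hence unique balancing along $\rho_i$.

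The step I expect to be the main obstacle is this propagation bookkeeping: one must verify that the numerical thresholds (five lines for a source, four for a reservoir) are genuinely sufficient in every configuration of the four points, separating the non-bipartite case, where no anchor is needed, from the bipartite case, where the aqueduct supplies it, and one must check that the curve-level connectivity defining a cluster really produces a chain of lines from each line-only point to a forced conic point, so that no point of $C$ escapes the argument.
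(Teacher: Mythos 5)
Your proposal is correct and follows essentially the same weight-propagation strategy as the paper's proof: the source's five lines force all of its weights to coincide (in your homogeneous setting, to vanish), aqueducts transport this to the reservoirs, whose four lines propagate it further, and cluster connectivity reaches the remaining points. Your reduction to the homogeneous system together with the block decomposition over maximal clusters, and your explicit final step for points of $C$ lying only on lines, are in fact slightly more careful than the paper's inhomogeneous argument, which glosses over that last case.
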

\begin{proof}
    We follow the strategy of weight propagation suggested above. Let $\cluster$ be a maximal cluster, and $C'$ a source. Since the arrangement is transverse, there are four intersection points $\pbf_1,\pbf_2,\pbf_3$ and $\pbf_1$ in $C\cap C'$, and given that there are 5 lines passing through these points, they yield four distinct terms $\alpha_\pbf [\nu_\pbf]$ in the left hand sum \eqref{eq:balancing_ray_divisor}. The balancing condition implies that the $\alpha_\pbf$ must all be equal, as otherwise the vectors $[e_j]$ corresponding to the lines do not appear an equal number of times. Next, let $C''$ be a reservoir, and $\pbf\in C'' \cap C$ a point where there is an aqueduct $L$ between $C'$ and $C''$. Then if the weights $\alpha_{\pbf_i} = \alpha$ are fixed, since the coordinate $[e_L]$ must appear $2\alpha$ times, we have that $\alpha_{\pbf}=\alpha$ as well. Then for the same reason, lines passing through two intersection points of $C'' \cap C$ have components appearing $2\alpha$ times, thus all points of $C'' \cap C$ must have the same weight. This reasoning will then also apply to any other reservoir connected to $C''$ by an aqueduct, and therefore the weights of all points in a cluster which is a balancing supply system are equal. Moreover, the balancing condition implies that the weights of two distinct clusters must agree. Therefore there is a unique balancing along the ray $\rho_i$.
\end{proof}

For a transverse very affine arrangement of lines and conics $\arrCur$, a straightforward condition for all the maximal clusters of a conic $C$ to be balancing supply systems, is simply that for any two other conics $C'$ and $C''$, all possible lines between points in $C' \cap C$ and $C'' \cap C$ are part of the arrangement.

\section{Cohomologically tropical arrangements}
In this section, we study two properties of transverse very affine arrangements of curves. First, we show that under certain conditions, the complement of the arrangement is \emph{wunderschön} in the sense of \cite[Definition 1.2]{AAPS23}, which is primarily a restriction on the Mixed Hodge structure of the cohomology of this complement and of the involved curves. Next we use the study of arroid fans which are tropical homology manifold as discussed in \cref{section:THM_arroid}, to show that for certain arrangements, the complement is \emph{cohomologically tropical} in the sense of \cite[Definition 1.1]{AAPS23}, i.e. that one may compute all the cohomology of the complement using only the tropical variety.

Let $\arrCur$ be a transverse very affine arrangement of curves in the plane $\plane{\C}$, and $\arrCompCur$ its complement, which we identify as a subvariety of its intrinsic torus $\intTor{\arrCompCur}$. The associated arroid $\arroid = \arroid_\arrCur$ then yields the tropicalization of $\arrCompCur\subseteq \intTor{\arrCompCur}$ as the support of the arroid fan $\Sigma_\arroid$.
\begin{proposition}\label{prop:wunderschon_arrangement}
    Let $\arrCur$ be a transverse very affine arrangement of non-singular rational curves in $\plane{\C}$ such that no intersection point of the arrangement contains exactly the same curves, then $\arrCompCur$ is a wunderschön variety.
\end{proposition}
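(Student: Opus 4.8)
The plan is to verify the wunderschön condition of \cite[Definition 1.2]{AAPS23} directly from an explicit normal crossing compactification of $\arrCompCur$. In the present setting the definition amounts, for a smooth very affine variety, to the existence of a compactification by a (combinatorial) normal crossing boundary divisor whose closed strata are all smooth and projective with cohomology of Hodge--Tate type, i.e. concentrated in even degree and pure of type $(p,p)$, the stratification being organized along the faces of the boundary complex. Thus the task reduces to producing such a compactification and checking that every stratum is Hodge--Tate, since $\arrCompCur$, being open in a smooth surface, is automatically smooth and is very affine by hypothesis.

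For the compactification I would apply the blow-up algorithm of \cref{section:curve_arr}: starting from $\plane{\C}$, iteratively blow up the points where at least three curves of $\arrCur$ meet. By \cref{cnc_trop} and \cref{cnc_no_char} this terminates in a smooth rational surface $\arrCompCurBar$ in which the total transform of $\arrCur$ is a combinatorial, indeed simple, normal crossing divisor compactifying $\arrCompCur$. Its irreducible components are of two kinds: the strict transforms of the curves of $\arrCur$, and the exceptional divisors of the blow-ups. The key geometric observation is that a smooth plane curve of degree $d$ has genus $\binom{d-1}{2}$, which vanishes only for $d\in\set{1,2}$, so the non-singular rational curves of $\arrCur$ are precisely lines and conics; consequently every strict transform is isomorphic to $\P^1$, and every exceptional divisor is a $\P^1$ as well.

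Next I would run through the strata of this normal crossing divisor on $\arrCompCurBar$. In codimension zero the only stratum is $\arrCompCurBar$ itself, a smooth rational surface; in codimension one the closed strata are the boundary divisors, each a copy of $\P^1$; and in codimension two the strata are the reduced pairwise intersection points. Each of these is a smooth projective rational variety, so its cohomology vanishes in odd degree and is pure of Tate type $(p,p)$ in even degree, which is exactly the Hodge-theoretic content required of the strata.

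It remains to package these facts into the stated definition, and this is where I expect the only genuine subtlety to lie: the framework requires the strata to be indexed faithfully by the faces of the boundary complex, so that each face corresponds to a single stratum. This is precisely what the hypothesis that no intersection point of $\arrCur$ contains exactly the same curves provides, as it forbids two distinct boundary points from carrying identical incidence data, making the nerve an honest simplicial complex matching the strata of $\arrCompCurBar$ and the arroid $\arroid_\arrCur$. With the stratification thus identified and every stratum shown to be Hodge--Tate, Deligne's weight spectral sequence for $\arrCompCur=\arrCompCurBar\setminus(\arrCompCurBar\setminus\arrCompCur)$ has all of its $E_1$-terms of Tate type, forcing the mixed Hodge structure on $H^\bullet(\arrCompCur(\C);\Q)$ to be Hodge--Tate and yielding the remaining conditions of \cite[Definition 1.2]{AAPS23}; hence $\arrCompCur$ is wunderschön. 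The main obstacle is matching the combinatorial bookkeeping of the definition rather than any hard Hodge theory, since the rationality of all strata makes the Tate-type conclusion immediate.
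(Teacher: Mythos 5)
There is a genuine gap, and it lies in your reading of the wunderschön condition itself. As used in the paper (following \cite[Definition 1.2]{AAPS23}), the condition concerns the \emph{open} strata of a tropical compactification: writing $\arrCompCurBar$ for the closure of $\arrCompCur$ in the toric variety of a unimodular fan on $\Trop(\arrCompCur)$, one must check that each $\arrCompCur^\sigma \coloneqq \arrCompCurBar \cap \torus^\sigma$ (intersection with the \emph{open} torus orbit of each cone $\sigma$) is non-singular and connected, and that the mixed Hodge structure on $H^k(\arrCompCur^\sigma)$ is \emph{pure of weight $2k$}, i.e. of type $(k,k)$. You instead verify that the \emph{closed} boundary strata are smooth projective rational varieties with Hodge--Tate cohomology. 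That is a different (and essentially automatic) statement: a smooth projective rational curve or surface has $H^k$ of weight $k$, not $2k$, so closed strata never satisfy the weight-$2k$ purity in positive degree; the condition is genuinely about the open strata --- the complement itself, the punctured curves over the rays, and the points over the two-dimensional cones. (Your use of the hypothesis that no two intersection points carry the same curves is in the right spirit, but its actual role is to guarantee that each stratum over a two-dimensional cone is a single point.)

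This misreading propagates into the final step, where the real work is missing. From Tate-type $E_1$-terms, Deligne's weight spectral sequence only gives that the MHS on $H^\bullet(\arrCompCur)$ is of Hodge--Tate type (all weight-graded pieces of type $(p,p)$); it does \emph{not} give purity of weight $2k$. Concretely, nothing in your argument excludes $\mathrm{Gr}^W_2 H^2(\arrCompCur) \neq 0$, which is perfectly compatible with Hodge--Tate-ness but violates the wunderschön condition. Ruling this out is the core of the paper's proof: one must show that the two even rows of the weight spectral sequence,
\[
0 \to \oplus_{p}\, H^0(p) \to \oplus_{D}\, H^2(D) \to H^4(\arrCompCurBar) \to 0
\qquad\text{and}\qquad
0 \to \oplus_D\, H^0(D) \to H^2(\arrCompCurBar) \to 0,
\]
have cohomology concentrated on the left. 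For the first row this uses connectedness of the dual complex of the boundary via \cite[Theorem 3.1]{Hacking}; for the second it uses that $H_2(\arrCompCurBar)$ is generated by the pullback of the line class together with the exceptional divisor classes, so the map $\oplus_D H^0(D) \to H^2(\arrCompCurBar)$ is surjective and $\mathrm{Gr}^W_2 H^2(\arrCompCur)=0$. Without these exactness arguments (and the corresponding, easier, purity check for the punctured rational curves over the rays), the proof does not reach the stated conclusion.
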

\begin{proof}
    Consider the toric variety $\C\P_{\Sigma_\arroid}$ associated to the (unimodular) fan $\Sigma_\arroid$ and the closure $\arrCompCurBar \subseteq \C\P_{\Sigma_\arroid}$. Each cone $\sigma$ of $\Sigma_\arroid$ gives a torus orbit $\torus^\sigma \subseteq \C\P_{\Sigma_\arroid}$, and let $\arrCompCur^\sigma \coloneqq \arrCompCurBar \cap \torus^\sigma$. To show that $\arrCompCur$ is wunderschön, we must first show that each $\arrCompCur^\sigma$ is non-singular. 
    
    For the minimal cone $\mincone$ of $\Sigma_\arroid$, corresponding to the central vertex, $\arrCompCur^\mincone$ is the complement of the curves of $\arrCur$, hence is non-singular. For a ray $\rho_i$ of $\Sigma_\arroid$ corresponding to a divisor $i \in \groundset$ of the arroid, the intersection $\arrCompCur^{\rho_i}$ is the curve of $\arrCur$ corresponding to $i$, which is non-singular. For a ray $\rho_\pbf$ corresponding to a point $\pbf\in \points$ of the arroid, the intersection $\arrCompCur^{\rho_\rho}$ is the exceptional curve of the blow-up at the corresponding point, which is also non-singular. Moreover for each cone $\sigma_{i,\pbf} = \cone(e_i, e_\pbf)$ of $\Sigma_\arroid$, since no intersection point of the arrangement contains exactly the same curves, the intersection $\arrCompCur^{\sigma_{i,\pbf}}$ is the intersection of the exceptional curve corresponding to $\pbf$ with the curve corresponding to $i$, which is exactly one point, hence non-singular.

    Next, to show that $\arrCompCur$ is wunderschön, we must verify that for each $\arrCompCur^\sigma$, the mixed Hodge structure on $H^k (\arrCompCur^\sigma)$ is pure of weight $2k$. This follows for the two-dimensional cones from the assumption on points, and follows for each one-dimensional cone as it corresponds to a punctured non-singular rational curve. Finally, for $\arrCompCur^\mincone=\arrCompCur$, we consider Deligne's weight spectral sequence for the mixed Hodge structure \cite[§7]{Hodge1}. All the homology of $\arrCompCurBar$ is generated by that of $\plane{\C}$ together with that of the exceptional divisors of the blown-up points. Therefore, there is no odd degree cohomology of $\arrCompCurBar$, and since all the boundary divisors are rational, all odd rows of the spectral sequence are identically zero. Then it remains to show that the cohomology of the following two complexes, corresponding to the only non-zero even rows of the weight spectral sequence, is concentrated on the left
    \[\begin{tikzcd}[column sep=small,row sep=tiny]
        0 & {\oplus_{p}\; H^0(p)} & {\oplus_{D}\; H^2(D)} & {H^4(\arrCompCurBar)} & {0,} \\
        & 0 & {\oplus_D \; H^0(D)} & {H^2(\arrCompCurBar)} & {0.}
        \arrow[from=1-1, to=1-2]
        \arrow[from=1-4, to=1-5]
        \arrow[from=2-2, to=2-3]
        \arrow[from=2-3, to=2-4]
        \arrow[from=2-4, to=2-5]
        \arrow[from=1-2, to=1-3]
        \arrow[from=1-3, to=1-4]
    \end{tikzcd}\]
    Here the $D$'s are the components of the boundary divisors compactifying $\arrCompCur$ to $\arrCompCur$, and the $p$'s correspond to their pairwise intersections. Since the dual graph of the compactifying divisor is connected, the top complex only has cohomology on the left by \cite[Theorem 3.1]{Hacking}. Moreover, the homology $H_2(\arrCompCurBar)$ is generated by the homology of $\plane{\C}$ along with the homology of the exceptional divisors of the blown-up points \cite[p. 474]{GriffithsHarris}, so the map in the lower complex is surjective, and gives the cohomology $\mathrm{Gr}_2^W H^2(\arrCompCur)=0$. We therefore have $\mathrm{Gr}_{i}^W H^k(\arrCompCur)$ is $0$ if $i\neq 2k$, i.e. the mixed Hodge structure is pure of weight $2k$.
\end{proof}

A transverse very affine arrangements $\arrCur$ of non-singular rational curves in $\plane{\C}$, i.e. of lines and conics, such that that no intersection point of the arrangement contains exactly the same curves, will be called \emph{simple}. For simple arrangements, we relate the rational cohomology of their complements with the tropical cohomology of their tropicalizations, using the notion of cohomologically tropical varieties.

We give a brief description of the \emph{cohomologically tropical} property for varieties and their tropicalizations, referring to \cite{AAPS23} for the original definition and greater details. For $X \subseteq \torus$ a subvariety of a torus, and $\Trop(X)$ its tropicalization, picking a unimodular fan $\Sigma$ supported on $\Trop(X)$ yields a compactification $\Xbar$ of $X$ inside the toric variety $\C\P_\Sigma$, as well as a compactification $\overline{\Trop(X)}$ of $\Trop(X)$ inside the tropical toric variety $\mathbb{T}\P_\Sigma$. There is a map $\tau^* \colon H^{\bullet}(\overline{\Trop(X)}) \to H^{\bullet}(\Xbar)$, relating the $\Q$-coefficient tropical cohomology of $\overline{\Trop(X)}$ to the rational cohomology ring of $\Xbar$ (see \cite{AAPS23} for more on this map). This map can also be defined for each of the reduced stars and the corresponding projections of the variety $X$. If all these maps are isomorphisms, $X$ is said to be \emph{cohomologically tropical}. 
For simple arrangements, we have the following result regarding which have cohomologically tropical complements.
\begin{theorem}\label{theorem:cohom_trop}
    Let $\arrCompCur$ be the complement of a simple arrangement $\arrCur$. Then $\arrCompCur$ is cohomologically tropical if and only if the corresponding arroid fan $\Sigma_{\arroid_\arrCur}$ is uniquely balanced along each of its rays.
\end{theorem}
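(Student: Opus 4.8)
The plan is to chain together the structural results established earlier with the main characterization theorem of \cite{AAPS23}. First I would invoke \cref{prop:wunderschon_arrangement}: since $\arrCur$ is a simple arrangement, its complement $\arrCompCur$ is a wunderschön variety. This is the essential input, as it places us in precisely the setting where the Hodge-theoretic comparison machinery of \cite{AAPS23} applies; without the purity of the mixed Hodge structure on the strata $\arrCompCur^\sigma$, there is no reason for tropical cohomology to compute the rational cohomology of the complement.

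With the wunderschön property in hand, I would apply \cite[Theorem 6.1]{AAPS23}, which states that a wunderschön variety is cohomologically tropical if and only if its tropicalization is a tropical homology manifold. This reduces the question about the complement to a purely tropical one about its tropicalization. To make the reduction concrete, I would identify the relevant tropical object using \cref{theorem:transverse_very_affine_curve_fan}: the tropicalization $\Trop(\arrCompCur)$ is supported on the arroid fan $\Sigma_{\arroid_\arrCur}$ of the associated transversal arroid. Hence the tropical homology manifold condition is a condition on $\Sigma_{\arroid_\arrCur}$.

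Finally I would analyze that condition with \cref{theorem:arroid_THM_is_local}, which asserts that the arroid fan is a tropical homology manifold if and only if it is uniquely balanced along each of its rays. Concatenating the three equivalences --- cohomologically tropical $\iff$ tropicalization is a tropical homology manifold $\iff$ the arroid fan $\Sigma_{\arroid_\arrCur}$ is a tropical homology manifold $\iff$ $\Sigma_{\arroid_\arrCur}$ is uniquely balanced along each ray --- yields the stated biconditional.

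The main obstacle, and the only genuinely non-formal point, is verifying that the hypotheses of \cite[Theorem 6.1]{AAPS23} are met compatibly across all the cited statements. In particular, the comparison map $\tau^*$ and the notion of cohomologically tropical are formulated with respect to a \emph{unimodular} fan supported on $\Trop(\arrCompCur)$; I would point out that the arroid fan carries exactly such a structure (this unimodularity is precisely what is used to build the toric variety $\C\P_{\Sigma_\arroid}$ in the proof of \cref{prop:wunderschon_arrangement}), so that the fan structure appearing in \cref{theorem:arroid_THM_is_local} agrees with the one required by \cite{AAPS23}. One should also observe that the cohomologically tropical property, being formulated over all reduced stars and their associated projections, matches the local-to-global nature of the tropical homology manifold condition, so no strata are left unaccounted for. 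Once this compatibility is recorded, the proof is a direct application of the four cited results.
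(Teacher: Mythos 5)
Your proposal is correct and follows essentially the same route as the paper's own proof: invoke \cref{prop:wunderschon_arrangement} to get the wunderschön property, apply \cite[Theorem 6.1]{AAPS23} to reduce cohomological tropicality to the tropical homology manifold condition on $\trop(\arrCompCur)=\Sigma_{\arroid_\arrCur}$, and conclude with \cref{theorem:arroid_THM_is_local}. Your extra remarks on unimodularity and the identification via \cref{theorem:transverse_very_affine_curve_fan} only make explicit what the paper leaves implicit.
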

\begin{proof}
    By \cref{prop:wunderschon_arrangement}, this complement is necessarily wunderschön. Therefore, the equivalence described in \cite[Theorem 6.1]{AAPS23} reduces to an equivalence between $\arrCompCur$ being cohomologically tropical and $\trop(\arrCompCur)=\Sigma_{\arroid_\arrCur}$ being a tropical homology manifold. The result then follows from \cref{theorem:arroid_THM_is_local}.
\end{proof}

\section{Maximal subvarieties}\label{section:maximal}
In this section, we will study the question of maximality, in the sense of the Smith-Thom inequality \eqref{eq:smith-thom}, for arrangements of curves using the arroid abstraction we have developed so far. For a simple arrangement $\arrCur$ of curves, i.e. consisting of lines and conics, defined over the reals, with all intersections being real, we will now use tropical homology manifolds to give a condition for maximality of the complement of the arrangement. Along with the two next lemmas, this will give examples of varieties satisfying conditions (a), (b) and (c) of \cite[p. 3]{AmbrosiManzaroli}, as described in the introduction.
\begin{lemma}\label{lemma:torsion-free}
    Let $\arrCur$ be an arrangement of curves containing at least one line in $\plane{\C}$, and $\arrCompCur$ its complement. Then the homology groups $H_1(\arrCompCur(\C);\Z)$ and $H_2(\arrCompCur(\C);\Z)$ are torsion-free, and $H_k(\arrCompCur(\C);\Z)=0$ for $k>2$.
\end{lemma}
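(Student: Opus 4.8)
The plan is to combine two classical facts about complements of plane curves: that $\arrCompCur(\C)$ is a smooth affine surface, which controls its integral homology in degrees $2$ and above, together with an explicit presentation of its first homology in terms of the degrees of the components, where the presence of a line is precisely what forces torsion to vanish.

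First I would observe that $C \coloneqq \cup_{D \in \arrCur} D$ is a nonempty hypersurface in $\plane{\C}$, so its complement $\arrCompCur(\C) = \plane{\C}\setminus C$ is a smooth affine variety of complex dimension $2$; it is moreover connected, since $C$ has real codimension $2$ in the connected manifold $\plane{\C}(\C)$. By the Andreotti--Frankel theorem (equivalently, $\arrCompCur(\C)$ is Stein, and one applies Morse theory to a strictly plurisubharmonic exhaustion function), it is homotopy equivalent to a CW complex of real dimension at most $2$. This immediately yields $H_k(\arrCompCur(\C);\Z)=0$ for $k>2$, as there are no cells in those degrees. It also yields that $H_2$ is torsion-free: for a CW complex of dimension at most $2$ the group of cellular $3$-chains vanishes, so $H_2 = \ker(\partial_2 \colon C_2 \to C_1)$ is a subgroup of the free abelian group $C_2$, hence free.

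The remaining and more delicate point is torsion-freeness of $H_1$. Here I would use the standard description of the first homology of the complement of a plane curve: if $C$ has irreducible components $C_1,\dots,C_r$ of degrees $d_1,\dots,d_r$, then $H_1$ is generated by the meridian loops $\gamma_1,\dots,\gamma_r$ about the components, subject to the single relation $\sum_i d_i\gamma_i = 0$, so that
$$H_1(\arrCompCur(\C);\Z) \cong \Z^r / \langle (d_1,\dots,d_r)\rangle.$$
The relation is obtained by restricting to a generic line $\ell \cong \P^1$, which meets each $C_i$ transversally in $d_i$ points; inside $\ell \setminus C$ the sum of the meridians about all $\sum_i d_i$ punctures is zero, and pushing this class forward into $\arrCompCur(\C)$ gives $\sum_i d_i\gamma_i = 0$, since each such meridian is conjugate to $\gamma_i$. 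That the meridians generate and that this is the only relation is the Zariski--van Kampen presentation of $\pi_1$ of the complement, which I would cite. The Smith normal form of the $1\times r$ integer matrix $(d_1\ \cdots\ d_r)$ then shows that this cokernel is isomorphic to $(\Z/\gcd(d_1,\dots,d_r))\oplus \Z^{r-1}$. Since $\arrCur$ contains a line, some $d_i = 1$, hence $\gcd(d_1,\dots,d_r)=1$ and $H_1(\arrCompCur(\C);\Z)\cong \Z^{r-1}$ is torsion-free.

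The main obstacle is the $H_1$ computation, namely justifying that the meridians generate $H_1$ with no relations beyond the degree relation; this is exactly where the hypothesis of containing a line enters, through the gcd of the degrees being $1$. Everything else is formal: the affineness dispatches the higher degrees and the freeness of $H_2$ with essentially no computation.
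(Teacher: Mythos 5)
Your proof is correct, and for two of the three claims it runs parallel to the paper's own argument: the vanishing of $H_k(\arrCompCur(\C);\Z)$ for $k>2$ comes in both cases from affineness plus Morse theory (the paper cites Milnor's Theorem 7.1; your Andreotti--Frankel invocation is the same result), and torsion-freeness of $H_1$ rests in both cases on the standard meridian description of the first homology of a plane curve complement --- the paper cites this directly from Dimca's Corollary 4.1.4, where the line in the arrangement is what turns the complement into an affine curve complement with free $H_1$, whereas you re-derive the presentation $\Z^r/\langle(d_1,\dots,d_r)\rangle$ via Zariski--van Kampen and conclude from $\gcd(d_1,\dots,d_r)=1$; these are the same fact in two guises. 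Where you genuinely diverge is $H_2$: the paper runs the long exact sequence of the pair $(\plane{\C},\arrCompCur(\C))$, excises down to a closed tubular neighborhood $T$ of the union of the curves, and applies Lefschetz duality to get $H_3(\plane{\C},\arrCompCur(\C);\Z)\cong H_3(T,\partial T;\Z)\cong H^1(T;\Z)$, which is torsion-free; you instead extract $H_2$ from the same Morse-theoretic input already used for the vanishing, observing that a space homotopy equivalent to a CW complex of dimension at most $2$ has $H_2$ equal to a subgroup of the free abelian group of cellular $2$-chains, hence free. Your route is strictly more economical --- one application of Andreotti--Frankel dispatches both the higher-degree vanishing and the freeness of $H_2$, with no duality needed --- while the paper's duality argument has the merit of exhibiting the geometric source of $H_2$ (the cohomology of a neighborhood of the curve) and of being the kind of argument that transfers to settings where one does not want to invoke a CW model. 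Both are complete proofs of the statement.
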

\begin{proof}
    Since the arrangement contains at least one line, it is affine, therefore the homology group $H_1(\arrCompCur)$ is torsion-free, see e.g. \cite[Corollary 4.1.4]{DimcaHypersurfaces}. Adapting similar methods, one may consider the long exact sequence in homology for the pair $(\plane{\C}, \arrCompCur)$, which yields
    \[\begin{tikzcd}[column sep = 9pt]
        \cdots & {H_3(\plane{\C};\Z)} & {H_3(\plane{\C},\arrCompCur(\C);\Z)} & {H_2(\arrCompCur(\C);\Z)} & {H_2(\plane{\C};\Z)} & {\cdots}
        \arrow[from=1-1, to=1-2]
        \arrow[from=1-2, to=1-3]
        \arrow[from=1-3, to=1-4]
        \arrow[from=1-4, to=1-5]
        \arrow[from=1-5, to=1-6]
    \end{tikzcd}\]
    Since $H_3(\plane{\C};\Z)=0$ and $H_2(\plane{\C};\Z)=\Z$, it suffices to show that $H_3(\plane{\C},\arrCompCur(\C);\Z)$ is torsion-free. Let $T$ be a closed tubular neighborhood of the curve $\curve = \cup_{C \in \arrCur} C$, with boundary $\partial T$. By excision, $H_3(\plane{\C},\arrCompCur(\C);\Z) \cong H_3(T, \partial T;\Z)$, and by Lefschetz duality \cite[p. 254]{HatcherAlgTop}, $H_3(T, \partial T;\Z)\cong H^1 (T;\Z)$. This last group is torsion-free since $H^1$-cohomology is always torsion-free.

    Moreover, since there is at least one line in the arrangement, the variety $\arrCompCur(\C)$ is affine. Therefore all homology $H_k(\arrCompCur(\C);\Z)$ vanishes for $k>2$ by \cite[Theorem 7.1]{MilnorMorseTheory}.
\end{proof}
\begin{lemma}\label{lemma:real_part_no_homology}
    Let $\arrCur$ be a simple arrangement in $\plane{\C}$, with all the curves defined over the reals, and all intersection points being real. Then $H_1(\arrCompCur(\R);\Z)$ is trivial.
\end{lemma}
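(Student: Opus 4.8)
The plan is to realize $\arrCompCur(\R)$ as a disjoint union of open topological disks, from which $H_1(\arrCompCur(\R);\Z)=0$ is immediate. The real loci $D(\R)$ of the curves $D\in\arrCur$ endow $\R\P^2$ with the structure of a finite CW complex whose open $2$-cells (the chambers of the arrangement) are precisely the connected components of $\arrCompCur(\R)=\R\P^2\setminus\bigcup_{D\in\arrCur}D(\R)$. Since $H_1$ of a disjoint union splits as the direct sum of the $H_1$ of the components, it suffices to show that every chamber is homeomorphic to an open disk.

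I would establish this by induction, adding the curves one at a time. Because $\arrCur$ is very affine, it contains three lines in general position; removing their real loci first cuts $\R\P^2$ into four triangular regions, each an open disk (the base case, confirmed by the Euler characteristic computation $V-E+F=3-6+F=1$, giving $F=4$). For the inductive step, suppose $D_1,\dots,D_{k-1}$ have been added and that every chamber of this partial arrangement is a disk. The new curve $D_k$ meets each $D_j$ with $j<k$ transversely and, by hypothesis, only in real points, and this finite set of intersection points subdivides $D_k$ into arcs. Along the interior of each such arc, $D_k$ crosses none of $D_1,\dots,D_{k-1}$, so the arc lies in a single chamber with both endpoints on that chamber's boundary; that is, $D_k$ contributes finitely many pairwise-disjoint, properly embedded arcs to each chamber it meets. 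Adding disjoint properly embedded arcs to an open disk subdivides it into open disks, so every chamber of $D_1,\dots,D_k$ is again a disk, completing the induction.

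The crucial point, and the step I expect to be the main obstacle, is that the real transversality hypothesis genuinely prevents a conic from contributing a \emph{closed loop} rather than arcs inside a single chamber, which would otherwise create an annular or M\"obius-type region with nontrivial $H_1$; indeed the chamber outside a single conic in $\R\P^2$, in the absence of lines, is a M\"obius band. This is excluded precisely because, all intersection points being real, each conic meets each of the three base lines in two real points (B\'ezout together with transversality), so no subarc of a conic can avoid the $1$-skeleton long enough to close up inside a chamber. Using moreover that every conic is smooth with connected, null-homotopic real locus, the induction yields that $\arrCompCur(\R)$ is a disjoint union of open disks, and hence $H_1(\arrCompCur(\R);\Z)=0$.
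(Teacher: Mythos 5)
Your proof is correct and follows essentially the same route as the paper: an induction adding one curve at a time, where the hypothesis that all intersection points are real guarantees that the new curve's real locus is cut into properly embedded arcs crossing existing chambers rather than closing up inside one (which would create an annular or M\"obius-type region). The paper's own proof is a two-sentence sketch of exactly this idea --- $C(\R)$ is an $S^1$ not contained in a single component of the previous complement, ``so no $H_1$ homology is added'' --- and your stronger inductive invariant, that every chamber is an open disk, is precisely what makes that sketch rigorous.
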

\begin{proof}
   This follows by induction. By adding the curve $C$ to the arrangement $\arrCompCurDeletion{C}$, the real part $C(\R)$ is an $S^1$ not contained in a component of $\arrCompCurDeletion{C}(\R)$ since all intersection points are real, so no $H_1$ homology is added. 
\end{proof}
\begin{theorem}\label{theorem:THM_maximal}
    Let $\arrCur$ be a simple arrangement in $\plane{\C}$, with all the curves defined over the reals. Suppose that all intersection points in the arrangement are real and $\Trop(\arrCompCur)$ is a tropical homology manifold. Then the complement $\arrCompCur$ is maximal.
\end{theorem}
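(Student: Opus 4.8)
The plan is to prove maximality by induction on the curves of the arrangement, deleting them one at a time and checking that at each deletion the total $\Z/2\Z$-Betti numbers $b_\bullet(\varietyReal)$ and $b_\bullet(\varietyComplex)$ drop by the \emph{same} integer. Since the two totals coincide for the base arrangement and the Smith--Thom inequality $b_\bullet(\varietyReal)\leq b_\bullet(\varietyComplex)$ holds at every stage, matching decrements force equality to persist all the way up, which is maximality. At each step I would note that deleting a curve preserves simplicity, the reality of all curves and of all their intersection points, and, by \cref{lemma:arroid_contraction_THM}, the tropical homology manifold property, so every intermediate arrangement still satisfies the hypotheses of the theorem.

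First I would reduce each side to a single number. On the real side, $\arrCompCur(\R)$ is an open subset of the real locus of the plane, so each component is a non-compact surface with $H_2(\arrCompCur(\R);\Z/2\Z)=0$; combined with \cref{lemma:real_part_no_homology} and the universal coefficient theorem (which force $H_1(\arrCompCur(\R);\Z/2\Z)=0$), this gives $b_\bullet(\varietyReal)=b_0(\arrCompCur(\R))=\chi(\arrCompCur(\R))$. On the complex side, \cref{lemma:torsion-free} shows $H_\bullet(\arrCompCur(\C);\Z)$ is torsion-free and concentrated in degrees at most two, so the $\Z/2\Z$-Betti numbers equal the rational ones and $b_\bullet(\varietyComplex)=1+\dim_\Q H_1(\arrCompCur(\C);\Q)+\dim_\Q H_2(\arrCompCur(\C);\Q)$.

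Next I would analyze the deletion of a single curve $C$. As all intersections are real and $C$ is a line or a conic, $C$ has non-empty real locus with $C(\R)\cong S^1$, and transversality makes $C$ meet the remaining curves in $k$ distinct real points, where $k=\sum_{C'\neq C}(\deg C)(\deg C')$ by Bézout. Put $Z\coloneqq C\cap\arrCompCurDeletion{C}$, i.e.\ $C$ with these $k$ points removed, so that $\arrCompCurDeletion{C}(\R)=\arrCompCur(\R)\sqcup Z(\R)$ with $Z(\R)$ a union of $k$ open arcs; additivity of the compactly supported Euler characteristic then shows that deletion lowers $\chi(\arrCompCur(\R))$, hence $b_\bullet(\varietyReal)$, by exactly $k$. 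Complex-analytically $Z(\C)\cong\P^1\setminus\{k\text{ points}\}$ is a smooth divisor of $\arrCompCurDeletion{C}(\C)$ with open complement $\arrCompCur(\C)$, giving a Gysin long exact sequence among the cohomologies of $\arrCompCurDeletion{C}(\C)$, $\arrCompCur(\C)$ and $Z(\C)$. Here is the crucial point: all three varieties are wunderschön by \cref{prop:wunderschon_arrangement}, so their cohomologies are pure with $H^j$ of weight $2j$; the Gysin maps raise weights by two and issue from $H^{k-2}(Z(\C))$, whose weights are at most $2(k-2)$, hence land strictly below the weight $2k$ of the pure group $H^k(\arrCompCurDeletion{C}(\C))$ and vanish. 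The sequence therefore splits into short exact sequences, giving $b_\bullet(\arrCompCur(\C))=b_\bullet(\arrCompCurDeletion{C}(\C))+b_\bullet(Z(\C))$; since $b_\bullet(Z(\C))=1+(k-1)=k$, deletion also lowers $b_\bullet(\varietyComplex)$ by exactly $k$.

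Finally I would run the induction down to the base case of the three coordinate lines, whose complement $(\C^*)^2$ is maximal with $b_\bullet(\varietyReal)=b_\bullet(\varietyComplex)=4$; deleting the conics first and then the superfluous lines reaches this case while keeping all hypotheses in force. As both totals drop by the same $k$ at each step and agree at the base, they agree for $\arrCur$, so $\arrCompCur$ is maximal. I expect the main obstacle to be the complex decrement, namely the splitting of the Gysin sequence into short exact sequences, which is what upgrades additivity of the alternating Euler characteristic to additivity of the full Betti total $b_\bullet(\varietyComplex)$; the purity of the mixed Hodge structures furnished by the wunderschön property (\cref{prop:wunderschon_arrangement}) is exactly the input that rules out the weight cancellation obstructing this, while the reality of every intersection point is what equates the complex decrement with the real one.
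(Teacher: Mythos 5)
Your proposal follows the paper's inductive skeleton (delete one curve at a time, reduce to a line-arrangement base case), and on the real side it is essentially the paper's argument: the identity $b_0(\arrCompCur(\R)) = b_0(\arrCompCurDeletion{C}(\R)) + k$, with $k$ the number of real arcs of the deleted curve, is exactly what the paper asserts; your compactly supported Euler characteristic computation is in fact a cleaner justification of the paper's one-line claim that each arc raises the component count by one. Where you genuinely diverge is the complex-side additivity $b_\bullet(\arrCompCur(\C)) = b_\bullet(\arrCompCurDeletion{C}(\C)) + b_\bullet(C^*(\C))$. The paper routes this through tropical geometry: wunderschön-ness plus the tropical homology manifold hypothesis make all three varieties cohomologically tropical by \cite[Theorems 6.1 and 6.2]{AAPS23}, and the additivity is then the short exact sequence for tropical modifications (\cref{lemma:ses_tropical_co_hom}). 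You instead obtain it classically from the Gysin sequence of the smooth divisor $C^*(\C) \subset \arrCompCurDeletion{C}(\C)$, using the purity furnished by \cref{prop:wunderschon_arrangement} to kill the Gysin maps (a morphism of mixed Hodge structures from a structure of weights $\leq 2k-2$ into one pure of weight $2k$ vanishes). This is sound, and it buys something the paper's route does not: your argument never uses the hypothesis that $\Trop(\arrCompCur)$ is a tropical homology manifold --- you invoke \cref{lemma:arroid_contraction_THM} only to propagate a hypothesis you never consume. Taken at face value, your proof gives the stronger statement that \emph{every} simple real arrangement with all intersection points real has maximal complement; in the paper, the THM hypothesis is an artifact of passing through cohomological tropicality.

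Two caveats. First, both your proof and the paper's rest on the unverified assumption that deleting a curve preserves simplicity: \cref{prop:wunderschon_arrangement} (your purity input, and the paper's) and \cref{lemma:real_part_no_homology} are applied to the deleted arrangement, yet removing a curve can in principle make two intersection points contain exactly the same set of remaining curves. Since the paper's own induction has the identical gap, this does not count against you, but if you want the strengthened (THM-free) statement you should address it, e.g.\ by choosing the deletion order carefully. Second, your parenthetical Bézout count $k=\sum_{C'\neq C}(\deg C)(\deg C')$ overcounts when a point of the arrangement lies on three or more curves; this is harmless, since all your argument needs is that the number of real punctures of $C(\R)$ equals the number of complex punctures of $C(\C)$, which is precisely what the all-intersections-real hypothesis provides.
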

\begin{proof}
    The arrangement $\arrCur$ consists of smooth rational curves in $\plane{\C}$, of which at least three are lines, so we first consider the smaller arrangement $\arrCurLines$ consisting of only the lines. One may prove that $\arrCompCurLines$ is maximal, see \cite[Corollary 5.95]{OrlikTerao}, so we may proceed by induction on the number of curves in the arrangement.

    Let $\arrCur$ be a simple arrangement of curves, with all intersection points being real, and with the corresponding arroid fan $\Sigma_{\arroid_\arrCur}$ being a tropical homology manifold. By \cref{prop:wunderschon_arrangement}, the arrangement complement $\arrCompCur$ is wunderschön, and so by \cite[Theorem 6.1]{AAPS23}, it is cohomologically tropical. By \cite[Theorem 6.2]{AAPS23}, we have isomorphisms $H^k(\arrCompCur(\C)) \cong H^{k,0}(\Sigma_{\arroid_\arrCur})$, and in particular $b_k(\arrCompCur(\C); \Z/2\Z) = \dim\; H^{k,0}(\Sigma_{\arroid_\arrCur})$, for all $k$ since the homology is torsion-free by \cref{lemma:torsion-free}.

    For any choice of curve $C\in \arrCur$ and corresponding $i\in \groundset$, the reduced star fan of the ray $\rho_i$ is given by $\Sigma_{\arroid / i}$ by \cref{prop:arroid_star}, and is a tropical homology manifold by assumption. It follows from \cref{prop:wunderschon_arrangement} that the punctured curve $C^*$ tropicalizing to $\Sigma_{\arroid /i}$ is wunderschön, and so by \cite[Theorem 6.2]{AAPS23}, $\dim \, H^k(C^*) = \dim \, H^{k,0}(\Sigma_{\arroid / i})$ for all $k$. Moreover, $C^*(\C)$ is homotopic to a wedge of circles, so that its homology is torsion-free, hence $b_k(C^*(\C)) = b_k(C^*(\C); \Z/2\Z)= \dim \; H^{k,0}(\Sigma_{\arroid /i})$ for all $k$.

    Similarly, we study the arrangement $\arrCur \setminus \set{C}$, corresponding to the contracted arroid $\arroid_{\arrCur \setminus i}$. The tropicalization of its complement $\arrCompCurDeletion{C}$ is the arroid fan $\Sigma_{\arroid_{\arrCur \setminus i}}$, which by \cref{lemma:arroid_contraction_THM} is a tropical homology manifold. By induction assumption, the complement $\arrCompCurDeletion{C}$ is maximal, i.e. $b_\bullet(\arrCompCurDeletion{C}(\R))= b_\bullet(\arrCompCurDeletion{C}(\C))$. By \cref{prop:wunderschon_arrangement}, $\arrCompCurDeletion{C}$ is wunderschön, so that by \cite[Theorem 6.1]{AAPS23} it is cohomologically tropical, and therefore $H^{k,0}(\Sigma_{\arroid_{\arrCur \setminus i}}) \cong H^k(\arrCompCurDeletion{C}(\C))$ for all $k$ by \cite[Theorem 6.2]{AAPS23}. Therefore $b_k(\arrCompCurDeletion{C}(\C)) = b_k(\arrCompCurDeletion{C}(\C); \Z/2\Z)= \dim \; H^{k,0}(\Sigma_{\arroid_{\arrCur \setminus i}})$ for all $k$.

    By \cref{lemma:real_part_no_homology}, $b_\bullet (\arrCompCur(\R)) = b_0 (\arrCompCur(\R))$ for any simple arrangement with real intersection points. Moreover, each arc of $C^*(\R)$ increases the number of connected components of $\arrCompCurDeletion{C}(\R)$ by $1$, so counting the number of connected components of $\arrCompCur(\R)$, we have $b_0 (\arrCompCur(\R)) = b_0 (\arrCompCurDeletion{C}(\R)) + b_0 (C^*(\R))$. Note also that the number of punctures of $S^1 \cong C(\R)$ by real points is $b_0(C^*(\R))$, so $C^*(\C)$ is a punctured Riemann sphere, which gives $b_0(C^*(\R)) = b_\bullet (C^*(\C))$. Summarizing, we have 
    \begin{align*}
        b_\bullet(\arrCompCur(\R))= b_0(\arrCompCur(\R)) &=  b_0 (\arrCompCurDeletion{C}(\R)) + b_0(C^*(\R)) \\ &= b_\bullet(\arrCompCurDeletion{C}(\C)) + b_\bullet(C^*(\C)).
    \end{align*}
    
    By \cref{lemma:ses_tropical_co_hom}, we have $\dim\, H^{k-1,0}(\Sigma_{\arroid / i}) + \dim\, H^{k,0}(\Sigma_{\arroid \setminus i}) = \dim\, H^{k,0}(\Sigma_{\arroid})$ for all $k$, which when combined with the equalities between tropical cohomology and singular cohomology described above, yield $b_\bullet(\arrCompCurDeletion{C}(\C)) + b_\bullet(C^*(\C)) = b_\bullet(\arrCompCur(\C))$.
\end{proof}

\begin{figure}
    \centering

    \begin{tikzpicture}[scale=1]
        \clip(-1.75,-1.75) rectangle (3.75,3.75);
        \draw [rotate around={0:(1,1)},line width=0.6pt] (1,1) ellipse (2cm and 1.1547005383792515cm);
        \draw [rotate around={90:(1,1)},line width=0.6pt] (1,1) ellipse (2cm and 1.1547005383792515cm);
        \draw [rotate around={0:(1,1)},line width=0.6pt] (1,1) ellipse (2.5cm and 1.0910894511799618cm);
        \draw [samples=50,domain=-0.99:0.99,rotate around={0:(1,1)},xshift=1cm,yshift=1cm,line width=0.6pt] plot ({0.6546536707079771*(1+(\x)^2)/(1-(\x)^2)},{0.8660254037844386*2*(\x)/(1-(\x)^2)});
        \draw [samples=50,domain=-0.99:0.99,rotate around={0:(1,1)},xshift=1cm,yshift=1cm,line width=0.6pt] plot ({0.6546536707079771*(-1-(\x)^2)/(1-(\x)^2)},{0.8660254037844386*(-2)*(\x)/(1-(\x)^2)});
        \draw [line width=0.6pt,domain=-2.64206528571275:6.025215839019944] plot(\x,{(--4-2*\x)/2});
        \draw [line width=0.6pt,domain=-2.64206528571275:6.025215839019944] plot(\x,{(-0-2*\x)/-2});
        \draw [line width=0.6pt] (2,-1.4626231659845152) -- (2,3.805933092380953);
        \draw [line width=0.6pt] (0,-1.4626231659845152) -- (0,3.805933092380953);
        \draw [line width=0.6pt,domain=-2.64206528571275:6.025215839019944] plot(\x,{(-4-0*\x)/-2});
        \draw [line width=0.6pt,domain=-2.64206528571275:6.025215839019944] plot(\x,{(-0-0*\x)/-2});
        \draw [samples=50,domain=-0.99:0.99,rotate around={90:(1,1)},xshift=1cm,yshift=1cm,line width=0.6pt] plot ({0.5863019699779287*(1+(\x)^2)/(1-(\x)^2)},{0.7237468644557459*2*(\x)/(1-(\x)^2)});
        \draw [samples=50,domain=-0.99:0.99,rotate around={90:(1,1)},xshift=1cm,yshift=1cm,line width=0.6pt] plot ({0.5863019699779287*(-1-(\x)^2)/(1-(\x)^2)},{0.7237468644557459*(-2)*(\x)/(1-(\x)^2)});
        \draw [rotate around={90:(1,1)},line width=0.6pt] (1,1) ellipse (2.5cm and 1.0910894511799618cm);

    \end{tikzpicture}
    \captionof{figure}{A maximal arrangement }
    \label{fig:maximal_arrangement}
\end{figure}

\begin{example}\label{ex:inf_family}
    We illustrate \cref{theorem:THM_maximal} by the example displayed in \cref{fig:maximal_arrangement}. It is constructed by selecting four points, drawing all lines between them, and then selecting any number of non-singular conics passing through those same four points. To see that these arrangements are tropical homology manifolds, it suffices to note that in each intersection point of a line with a conic, multiple other lines pass through the same point, so that all rays associated to lines of the arroid fan are uniquely balanced by \cref{lemma:arroid_balance_line_ray}, and all intersection points of any pair of conics are connected by lines, so that by \cref{lemma:arroid_balance_conic_ray} all rays associated to conics are uniquely balanced. By \cref{theorem:arroid_THM_is_local}, the arroid fan is therefore a tropical homology manifold, hence by \cref{theorem:THM_maximal} each such arrangement is maximal.
\end{example}

Finally, simple real arrangements of curves, all intersecting in real points, with the tropicalization of the complement being a tropical homology manifold, satisfy conditions (a), (b) and (c) of \cite[p. 3]{AmbrosiManzaroli}.
\begin{theorem}\label{theorem:amb_manz_examples}
    Let $\arrCur$ be a simple arrangement of real curves in $\plane{\C}$, with all intersection points being real, and such that the tropicalization $\Trop(\arrCompCur)$, which is supported on the arroid fan $\Sigma_{\arroid_\arrCur}$, is a tropical homology manifold. Then the following four properties are satisfied:
    \begin{enumerate}[label=(\alph*)]
        \item $H^i (\arrCompCur(\R);\Z/2\Z)=0$ for $i\geq 1$,
        \item $\arrCompCur$ is a maximal variety, 
        \item the mixed Hodge structure on $H^i(\arrCompCur(\C);\Q)$ is pure of type $(i,i)$ with cohomology groups $H^i(\arrCompCur(\C);\Z)$ torsion-free for $i\geq 1$, and
        \item $\dim_\Q H^i(\arrCompCur(\C);\Q) = \sum_j \dim_\Q H^{i,j}(\Sigma_{\arroid_\arrCur})$ for each $i\geq 0$.
    \end{enumerate}
\end{theorem}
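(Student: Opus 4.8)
The plan is to assemble the four statements from the structural results already established, handling each in turn. Throughout, I use two standing facts: by \cref{prop:wunderschon_arrangement} the complement $\arrCompCur$ is wunderschön, and since $\Trop(\arrCompCur)=\Sigma_{\arroid_\arrCur}$ is a tropical homology manifold, \cref{theorem:cohom_trop} shows $\arrCompCur$ is cohomologically tropical; hence, exactly as in the proof of \cref{theorem:THM_maximal}, \cite[Theorem 6.2]{AAPS23} gives isomorphisms $H^k(\arrCompCur(\C);\Q)\cong H^{k,0}(\Sigma_{\arroid_\arrCur})$ for all $k$, compatible with the Hodge bigrading.

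For (a), the real part $\arrCompCur(\R)$ is an open subset of the real plane, hence a noncompact surface, which is homotopy equivalent to a graph; therefore $H^i(\arrCompCur(\R);\Z/2\Z)=0$ automatically for $i\geq 2$. For $i=1$, \cref{lemma:real_part_no_homology} gives $H_1(\arrCompCur(\R);\Z)=0$, and since $H_0$ is free the universal coefficient theorem yields $H^1(\arrCompCur(\R);\Z/2\Z)=0$ as well. Statement (b) is precisely \cref{theorem:THM_maximal}, whose hypotheses coincide verbatim with those of the present theorem.

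For (c), torsion-freeness of $H^i(\arrCompCur(\C);\Z)$ for $i\geq 1$ follows from \cref{lemma:torsion-free}: the integral homology groups $H_0,H_1,H_2$ are free and $H_k=0$ for $k>2$, so the torsion subgroup of $H^i$, which is $\mathrm{Ext}^1_\Z(H_{i-1}(\arrCompCur(\C);\Z),\Z)$ by the universal coefficient theorem, vanishes in every degree $i\geq 1$. For the Hodge type, the wunderschön property gives $\mathrm{Gr}^W_j H^i(\arrCompCur(\C);\Q)=0$ unless $j=2i$, so the mixed Hodge structure is pure of weight $2i$; the identification $H^i(\arrCompCur(\C);\Q)\cong H^{i,0}(\Sigma_{\arroid_\arrCur})$ realizes this weight-$2i$ piece as a tropical $(i,0)$-group, which is of Hodge--Tate type $(i,i)$, yielding the asserted purity. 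Finally, for (d), since $\Sigma_{\arroid_\arrCur}$ is a fan its tropical cohomology is concentrated in bidegrees $(p,0)$, so $\sum_j \dim_\Q H^{i,j}(\Sigma_{\arroid_\arrCur})=\dim_\Q H^{i,0}(\Sigma_{\arroid_\arrCur})$, and the cohomologically tropical isomorphism identifies this with $\dim_\Q H^i(\arrCompCur(\C);\Q)$.

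The main point requiring care is the purity claim in (c): the wunderschön property alone provides only weight purity (weight $2i$ in degree $i$), so upgrading this to the finer statement that the Hodge structure is of type $(i,i)$ forces one to re-enter through the cohomologically tropical isomorphism, which is where the tropical homology manifold hypothesis is genuinely used and where one must verify that the isomorphism of \cite[Theorem 6.2]{AAPS23} respects the Hodge bigrading rather than merely matching dimensions. The remaining conditions are, by contrast, direct consequences of the lemmas of \cref{section:maximal} together with the universal coefficient theorem.
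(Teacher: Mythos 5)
Your overall structure matches the paper's proof: (a) comes from \cref{lemma:real_part_no_homology} (with the degree $\geq 2$ vanishing and the coefficient change filled in, which the paper leaves implicit), (b) is \cref{theorem:THM_maximal}, the torsion-freeness in (c) comes from \cref{lemma:torsion-free} via universal coefficients, and (d) comes from the cohomologically tropical property supplied by \cref{theorem:cohom_trop} together with the concentration of fan tropical cohomology in bidegrees $(p,0)$. All of these parts are correct and are essentially the paper's argument.

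The one flawed step is your treatment of the purity claim in (c). You assert that wunderschön yields only weight purity, and that upgrading to type $(i,i)$ requires re-entering through the isomorphism $H^i(\arrCompCur(\C);\Q)\cong H^{i,0}(\Sigma_{\arroid_\arrCur})$ of \cite[Theorem 6.2]{AAPS23}; as you yourself note, this demands a Hodge compatibility that you do not verify, and as written it is not a proof: $H^{i,0}(\Sigma_{\arroid_\arrCur})$ is a bare $\Q$-vector space carrying no Hodge structure, so ``realizing'' the weight-$2i$ piece as a tropical $(i,0)$-group says nothing about its Hodge type. The gap closes without any tropical input: for a smooth variety $U$ with snc compactification, Deligne's weight spectral sequence exhibits $\mathrm{Gr}^W_{2i}H^i(U;\Q)$ as a subquotient of $H^0(D^{(i)};\Q)(-i)$, where $D^{(i)}$ is the disjoint union of the $i$-fold intersections of boundary divisors; this is Hodge--Tate of type $(i,i)$. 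Hence weight-$2i$ purity, which is what wunderschön (\cref{prop:wunderschon_arrangement}) provides, already forces purity of type $(i,i)$, and this is how the paper argues. In particular your closing remark misidentifies where the hypotheses are used: the tropical homology manifold assumption is not needed for (c) at all --- it enters only in (b) and (d), and (c) holds for every simple arrangement.
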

\begin{proof}
    The first property follows from \cref{lemma:torsion-free} and \cref{lemma:real_part_no_homology}, the second from \cref{theorem:THM_maximal}. The purity of the mixed Hodge structure on rational cohomology is satisfied because $\arrCompCur$ is wunderschön by \cref{prop:wunderschon_arrangement}, and the integer homology is torsion-free by \cref{lemma:torsion-free}. The last property follows from \cref{theorem:cohom_trop}, as $\arrCompCur$ is cohomologically tropical since the tropicalization is a tropical homology manifold.
\end{proof}
In particular, the family of arrangements described in \cref{ex:inf_family} gives examples of the varieties satisfying the conditions of \cite{AmbrosiManzaroli}.

\bibliographystyle{msc} 
\bibliography{bibliography}    

\end{document}